\documentclass{article}
\usepackage{amsmath,natbib}
\usepackage{amsfonts}
\usepackage{color}
\usepackage{comment}
\usepackage{appendix}
\newtheorem{theorem}{Theorem}

\newtheorem{assumption}[theorem]{Assumption}

\newtheorem{definition}[theorem]{Definition}

\newtheorem{lemma}[theorem]{Lemma}

\newtheorem{proposition}[theorem]{Proposition}
\newtheorem{remark}[theorem]{Remark}

\newenvironment{proof}[1][Proof]{\noindent\textbf{#1.} }{\ \rule{0.5em}{0.5em}}
\begin{document}

\title{A Mathematical Framework for Linear Response Theory for Nonautonomous Systems}
\author{
    Stefano Galatolo$^{1}$\footnote{Email:\texttt{stefano.galatolo@unipi.it}}
    \quad Valerio Lucarini$^{2,3}$\footnote{Email:\texttt{v.lucarini@leicester.ac.uk}} \\[2ex]
    $^{1}$Dipartimento di Matematica, Universit\`a di Pisa, Pisa, Italy\\
    $^{2}$School of Computing and Mathematical Sciences\\ University of Leicester, Leicester, UK\\
    $^{3}$School of Sciences, Great Bay University\\Dongguan, P.R. China
}

\maketitle

\begin{abstract}

Linear Response theory aims to predict how added forcing alters the statistical properties of an unforced system.
These kind of questions have been studied predominantly for autonomous dynamical systems, yet many systems in the physical, natural, and social sciences are inherently nonautonomous, evolving in time under external forcings of various kinds (a canonical example being the climate system). In such settings, one would like to understand how the system’s time-dependent statistical properties change when additional infinitesimal forcings are applied. This question is of clear practical relevance, but from a rigorous mathematical viewpoint it has been addressed only for a few specific classes of systems/perturbations.
Here we provide a rigorous linear response theory for a rather general class of deterministic and random nonautonomous systems satisfying a specific set of assumptions that in some sense extend the standard assumptions used in the autonomous setting.  A central ingredient is rapid loss of memory, i.e.\ sufficiently fast forgetting of initial conditions along the nonautonomous evolution.  
 Our main strategy is to reformulate the sequential dynamics as a fixed-point problem for a global transfer operator acting on an extended sequence space of measures. This yields explicit and readily implementable causal response formulas for predicting the effect of small perturbations on time-dependent statistical states.
 We illustrate the theory on two representative classes: sequential compositions of  expanding maps and sequential compositions of noisy random maps, where uniform positivity of the noise induces exponential loss of memory. {We also show how the time-discretized version of a dissipative stochastic differential equation obtained by applying the Euler-Maruyama scheme fullfils linear response results. We finally ground our results on physically-relevant applications by studying the response of a discretized version of the Ghil-Sellers energy balance model with respect to a
time-dependent perturbation of the greenhouse parameter.}
\end{abstract}
\section{Introduction}
Investigating quantitatively how a system responds to perturbations is a key challenge in mathematics, the natural and quantitative social sciences, and engineering. In the context of dynamical systems and their statistical properties, the problem can be cast as follows: how do the statistical properties of a system change when we perturb the reference dynamics with a small additional forcing? Is it possible to predict the response from the knowledge of the acting forcing and of the statistical properties of the unperturbed state? 
In this paper, we will set up a framework where one can answer these questions in the context of non-autonomous,  time-dependent systems.

\noindent{\bf Related literature.} In the context of time-independent deterministic or random systems, a vast body of literature exists on this topic. 

 In the \emph{physical literature}, the starting point can be traced to the fluctuation-dissipation theorem (FDT), which establishes a link between unforced fluctuations of a system and its response to perturbations~\cite{Kubo.1966}. 
The linear response of the system can be predicted using Green's functions, which have particularly intuitive expressions found for systems near thermodynamic equilibrium \cite{Kubo1957}. Well-established applications of linear response theory include solid state physics and optics \cite{Lucarini2005}, plasma physics \cite{Turski1969}, and stellar dynamics~\cite{BinneyTremaine2008}, neuroscience \cite{Cessac2019}, multiagent models \cite{LPZ2020,LPZ2021}, as well as in many other systems, both near and far from thermodynamic equilibrium   \cite{Hanggi1982,Ottinger2005,Marconi2008,Baiesi2013,Colangeli_2012,Colangeli_2014,Lucarini2017,LucariniChekroun2023,LucariniChekroun2024}, see also the recently dedicated special issue \cite{Gottwald2020LRT}.

 In the context of \emph{deterministic dynamical systems}, Ruelle initiated the mathematical investigation of linear response for Axiom A chaotic systems, provided explicit response formulas, and clarified that the classical form of the fluctuation-dissipation theorem does not hold because of the different geometrical properties of the stable vs unstable tangent space \cite{Ruelle1997DifferentiationSRBstates,Ruelle1998GeneralLinearresponseformula,ruelle_nonequilibrium_1998,ruelle_review_2009}. This leads to major difficulties in directly implementing Ruelle's formulas \cite{abramov2007}, even if in recent years much progress has been achieved using adjoint and shadowing methods \cite{Wang2013,Chandramoorthy2020,Ni2020,Chandramoorthy2022,Ni2023,Ni2026FastDifferentiationHyperbolicChaos}.

Ruelle's results have then been first generalized to the some cases of partially hyperbolic systems \cite{Dolgopyat2004}, and, later on, to many other classes of systems (e.g. \cite{liverani2006,BL07,B08,BaladiTodd2016,Korepanov_2016,Galatolo2022,SelleyTanzi2021,Bah_Gal_2024,Baladi_2008}) also extending the methods  in more abstract terms by taking advantage of the transfer operator formalism \cite{B00}. 
Note that transfer operator methods can also be implemented for the numerical computation of linear response, and can give very precise computations, also with explicit bounds on the approximation errors \cite{Pollicott_2016, FroylandPhalempin2025OptimalLinearResponse,Bahsoun_2018}  however these methods, typically suffer from the curse of dimensionality and are not efficient for performing actual calculations of the response of high-dimensional systems. 

Response theory is easier to justify in rigorous mathematical terms when considering \emph{random dynamical systems} and diffusion processes  \cite{HairerMajda2010, DemboDeuschel2010,pavliotisbook2014,Galatolo_2019,BAHSOUN20,GaSe}.
Coming to quenched results, which are nearer to the theme of the present work, rigorous quenched linear response for random dynamical systems has  been studied in \cite{DragicevicGiuliettiSedro2022}, establishing an abstract quenched response theory for parametrised transfer-operator cocycles and applying it
to smooth expanding-on-average cocycles on the circle, while \cite{DragicevicSedro2021} treat random
products of nearby Anosov diffeomorphisms in a uniformly hyperbolic setting. 
Closer to a genuinely nonautonomous
\emph{sequential} perspective, \cite{DragicevicGonzalezTokmanSedro2024} derives linear response for sequential intermittent (LSV-type) systems.
The paper treats sequential compositions with a projective-metric method tailored to intermittent  maps and obtains quenched response for the corresponding random compositions. We remark that these works on quenched response mostly rely on cocycle-based Oseledets techniques and random dynamical systems methods. 

The emphasis of the present work is different from that of quenched response theory. Rather than studying almost-sure response along trajectories generated by an underlying random base dynamics, we consider an arbitrary prescribed sequence of transfer operators satisfying uniform assumptions. This leads to a response result that is uniform in time on an \(\ell^\infty\)-space of sequences, at the price of requiring uniform strong bounds and uniform exponential loss of memory.

Still related to the random dynamical  case, we also mention integral analogs of the perturbed Kolmogorov equation \cite{Kartashov2005} and finite state Markov chains \cite{Lucarini2016,Antown2018,Santos2020}, which have the great advantage of providing an equation-agnostic platform \cite{Lucarini2025}  and can be coupled naturally with data-driven reduced-order techniques such as Markov state modeling \cite{Pande2010,Husic2019}.

\noindent{\bf Linear response and data driven methods.} Very recently, considerable scientific advances have come, both in terms of theoretical development and of practical use, from the combination of response theory with data driven methods. Specifically, combining response theory with Koopmanism \cite{Budisic2012,KutzBrunton2016,Brunton2022,colbrook2024} leads, under suitable hypotheses, to the interpretability of the response operators, as they can be written as the sum of terms, each associated with a specific mode of unforced variability of the system \cite{Santos2022}. This approach has the great advantage of allowing the construction of response operators from the properties of the unperturbed system and of identifying which modes of unforced variability modes provide the most important contribution to the response \cite{lucarini2025generalframeworklinkingfree,zagli_SIAM:2026} and can be shown to have a clear connection with Markov state modeling \cite{Lucarini2025}. A separate line of fruitful investigation comes from combining response theory with generative modeling \cite{giorgini2024linear,giorgini2025statistical,giorgini2024datadriven} which is providing encouraging results in terms of constructing response operators for high-dimensional systems.

\noindent{\bf Nonautonomous systems.} Regardless of the level of mathematical rigor, physical relevance, numerical implementability, or data processing needs, a common major limitation of almost all the references mentioned so far is that they assume that one is perturbing a  steady reference state,  associated with an invariant measure. But, indeed, many systems of interest evolve according to dynamical laws that are explicitly time-dependent, and, yet, we would like to be able to study the response of such  systems to perturbations. The time modulation can be periodic - think of any ecological system influenced by the seasonal cycle - or aperiodic - think of the climate, which is impacted by irregular geochemical forcings, by human activities, on top of more regular astrophysical and astronomical modulations {(\cite{Crisan2026,Ashwin2026})}. Additionally, explicit time-dependence emerges each time we consider a multiscale system and focus our analysis on a restricted range of time scales, see \cite{santos2021} and the physically inspiring discussion in \cite{saltzman_dynamical}.

The extension fluctuation-dissipation results and linear response to aging systems have been presented since a while in the physical literature, see e.g. \cite{Crisanti2003,Bertier2007}. Yet aging systems are a very special class of system whose statistical properties change with time. In the context of diffusion processes, Branicki and Uda \cite{Branicki2021} have provided a rigorous generalization of response theory for systems whose reference dynamics is periodic. 

Instead,  a recent preprint \cite{Lucarini2026} has presented on more heuristic basis general response formulas for a rather general class of time-dependent systems, providing explicit results  for time dependent finite Markov chains and diffusion processes. The work \cite{Lucarini2026} also shows numerical evidence of the validity of the proposed nonautonomous linear response formula on a simple yet foundational climate model, namely the celebrated Ghil-Sellers model \cite{Sellers,Ghil1976,Bodai2015}. The preprint \cite{Lucarini2026} provides a rather extensive introduction to the practical motivations and challenges behind the development of a response theory for time-dependent systems and provides a sketch of the mathematical framework needed to develop it with a reasonable degree of rigor. 
A key challenge in this context is to be able to construct a valid way to perturb the equivariant measure of the system supported on its pullback attractor \cite{Crauel1997,CHEKROUN20111685}.

\noindent{\bf Our contribution.} Motivated in part by the formal response formulas proposed in \cite{Lucarini2026}, we develop an abstract operator-theoretic framework in which such formulas can be rigorously justified, providing a rigorous and general foundation for the corresponding response theory, which can be applied both to time dependent deterministic systems or time dependent random ones. This also extends the results of \cite{Branicki2021} to the case of aperiodic (nonautonomous) reference dynamics.
We focus on the case of discrete time dynamics.
Our main strategy is to introduce a global transfer operator acting on an extended space of sequences of measures, which allows us to handle the additional difficulties caused by a genuinely time-dependent reference dynamics. The resulting framework applies to nonautonomous systems satisfying assumptions that closely parallel the standard autonomous hypotheses: uniform Lasota-Yorke estimates, differentiability of the perturbation, and sufficiently fast loss of memory.\footnote{These assumptions are the natural nonautonomous counterparts of the conditions typically used to establish linear response in the autonomous setting.}

We study sequential compositions of transfer operators satisfying the above assumptions. Exponential loss of memory yields a resolvent-type operator for the associated global map, with properties analogous to the classical resolvent in the autonomous case. This leads to our main abstract linear response theorem (see Theorem~\ref{thm:LR_strong}), which establishes differentiability of the equivariant family for the operator sequence and applies to both deterministic and random nonautonomous systems. We illustrate the framework on two representative classes of deterministic and random dynamical systems: sequential compositions of expanding maps and random dynamical systems with sufficiently strong additive noise.
{
This latter general class of systems includes models of many important physical systems. 
In this setting, we derive the exponential memory loss from a uniform Doeblin condition. }

{We apply this construction
for studying reflected Euler--Maruyama discretizations of dissipative nonautonomous
stochastic differential equations with nondegenerate additive Gaussian
noise. The relevance of this result lies in the fact that the Euler--Maruyama scheme is the foundational numerical method for integrating Stochastic Differential Equations. Its importance lies in its simplicity, generality, and broad applicability in science and engineering, wherever systems are subject to random fluctuations. It is the workhorse integrator in numerical studies of noise-induced transitions, metastability, and stochastic stability across dynamical systems. It achieves strong convergence of order $1/2$ and weak convergence of order $1$, which makes it the natural baseline against which higher-order schemes are benchmarked \cite{KlPl92,Kloeden2011}.} 

{Finally, we ground our results in real-life problems by considering a relevant example drawn from climate physics \cite{Ghil2020}. We consider the stochastic version of the Ghil--Sellers energy balance model, which provides a succinct yet extremely meaningful representation of the processes of absorption and scattering of incoming solar radiation mediated by the ice-albedo feedback, the emission of infrared radiation to space mediated by the greenhouse effect, and the convergence of meridional heat transport due to the atmosphere and to the ocean \cite{Sellers1969,Ghil1976,Bodai2015}. After discretizing latitude and time and restricting the numerical dynamics to a physically-meaningful large reflected temperature cube, we prove linear response of the resulting equivariant family with respect to a time-dependent perturbation of the greenhouse parameter representing a CO\(_2\)-type forcing.}

The paper is organized as follows. Section~\ref{s:sequential} introduces the main framework: suitable sequence spaces, the associated global transfer operator, and equivariant measures realized as fixed points (equivalently, eigenvectors for the unit eigenvalue) of this operator. Section~\ref{lom} presents the key dynamical hypothesis underpinning the theory, namely loss of memory. In Section~\ref{linearresponse} we establish resolvent estimates for the global map and prove the abstract linear response result for time-dependent systems, obtaining explicit response formulas. Sections~\ref{sequential} and \ref{sec:LR_random_noise} apply the theory to  representative classes of sequential systems, both in the deterministic case  (expanding maps) and in the random case (additive-noise dynamics),
{ including the application to Euler-Maruyama discretizations of SDE}. {Section \ref{subsec:LR_Ghil_Sellers} focuses on the above-mentioned case of the Ghil-Sellers climate model.}
Section~\ref{conclusions} discusses the results and outlines directions for future work. Appendices~\ref{appendixa} and \ref{subsec:mixed_continuity_n_branches} collect technical tools used in the proofs.

\section{Sequential nonautonomous systems, sequence spaces, and global map}\label{s:sequential}

We address the questions described in the introduction   using a transfer-operator approach. Since we also aim to cover deterministic systems, following the standard practice in the literature, we work with transfer operators acting on two Banach spaces, a strong and a weak one. 
We recast the nonautonomous setting as an autonomous one on an extended phase space, namely a suitable sequence space. In this formulation, equivariant families become fixed points of an associated (global) transfer operator. Passing to the sequence space restores autonomy, at the price of losing the compactness of the strong-to-weak embedding in the corresponding sequence spaces. In this section we introduce this construction and the required preliminary notions.

Let $X$ be separable metric space.
Let $(B_s,\|\cdot\|_s)$ be a Banach space of finite signed Borel measures on $X$.
Let us also consider a "weaker" Banach space of signed measures $(B_w,\|\cdot\|_w)$ such that $B_s\subseteq B_w$ and $\|\cdot\|_w\leq \|\cdot\|_s$.
\footnote{In concrete examples, when dealing with expanding maps or noisy systems  $B_s$ can be a space of measures having regular densities, as for example the Sobolev space $W^{1,1}$, while $B_w$ usually is set to be $L^1$ (see Section \ref{sequential}). Dealing with hyperbolic dynamical systems, in the presence of contracting directions, the framework can be extended to suitable "anisotropic" distribution spaces (see e.g. \cite{liverani2006}).}
Write $\mu(X)$ for the total mass and set
\[
V_s:=\{\mu\in B_s:\mu(X)=0\}.
\]
We will also suppose that $V_s$ is a closed subspace of $B_s$.
Define the sequence space
\[
\mathcal B_s := \ell^\infty(\mathbb Z;B_s),
\qquad
\|\boldsymbol\mu\|_{\mathcal B_s}:=\sup_{n\in\mathbb Z}\|\mu_n\|_s,
\]
and the zero-mass subspace
\[
\mathcal V_s := \ell^\infty(\mathbb Z;V_s)\subset \mathcal B_s.
\]


Let $\mathbb S:\mathcal B_s\to\mathcal B_s$ denote the left shift,
\[
(\mathbb S\boldsymbol\mu)_n := \mu_{n+1}.
\]
Then $\mathbb S$ is a linear isometry on $\mathcal B_s$.

Now we introduce the time dependent dynamics by considering a sequence of transfer operators. To formalize the idea of a perturbation in the family, we will introduce a further parameter $\varepsilon$ to adjust the strength of the perturbation. 
The operators for $\varepsilon=0$ will then be considered as unperturbed ones.

Let $\epsilon_0>0$ and $(L_n^\varepsilon)_{n\in\mathbb Z}$ be a family of uniformly bounded linear operators
\[
L_n^\varepsilon:B_s\to B_s,\qquad L_n^\varepsilon:B_w\to B_w \qquad \varepsilon\in [0, \varepsilon_0),
\]
which are \emph{Markov} in the sense that they are positive and they preserve total mass:
\begin{equation}\label{eq:mass_preserving}
(L_n^\varepsilon\mu)(X)=\mu(X)\qquad\forall \mu\in B_s,\ \forall n,\ \forall \varepsilon \in [0, \varepsilon_0).
\end{equation}
We define the (parameter-dependent) \emph{global map} $\mathbb F_\varepsilon:\mathcal B_s\to\mathcal B_s$ by
\[
(\mathbb F_\varepsilon(\boldsymbol\mu))_n := L_n^\varepsilon \mu_n.
\]
A sequence of probability measures $\boldsymbol\mu^\varepsilon=(\mu_n^\varepsilon)_{n\in\mathbb Z}$ in ${\mathcal B_s}$ is said to be an \emph{equivariant family} for $(L_n^\varepsilon)$ if
\begin{equation}\label{eq:equiv}
\mu_{n+1}^\varepsilon = L_n^\varepsilon \mu_n^\varepsilon\qquad\forall n\in\mathbb Z,
\end{equation}
which is equivalent to the fixed point equation on $\mathcal B_s$
\begin{equation}\label{eq:FP_shift}
\mathbb S\boldsymbol\mu^\varepsilon = \mathbb F_\varepsilon(\boldsymbol\mu^\varepsilon).
\end{equation}

In the time-dependent systems we consider, the equivariant measure is supported on the pullback attractor and plays the same role as the invariant measure in the classical autonomous setting.

It is convenient to work on another global map that is the composition
\[
\mathbb T_\varepsilon := \mathbb S^{-1}\mathbb F_\varepsilon,
\qquad
(\mathbb T_\varepsilon\boldsymbol v)_n = L_{n-1}^\varepsilon v_{n-1}.
\]
Then \eqref{eq:FP_shift} is equivalent to
\begin{equation}\label{eq:FP_T}
\boldsymbol\mu^\varepsilon = \mathbb T_\varepsilon \boldsymbol\mu^\varepsilon.
\end{equation}



Note that in what follows we will consider families of transfer operators parametrized by $\varepsilon$, where $\varepsilon=0$ describes our reference state, whilst considering $\varepsilon\neq0$ is associated with perturbed states. 

\section{Loss of memory} \label{lom}

In the autonomous setting, linear response results are often proved under a spectral gap assumption or some kind of fast convergence to equilibrium (with notable exceptions such as \cite{GalatoloSorrentino2022, Korepanov_2016}). In our nonautonomous framework, the analogous role is played by loss of memory. This concept quantifies the speed at which a non autonomous dynamical system forgets its initial condition, and this is also a key ingredient proposed in \cite{Lucarini2026}. In this section, we introduce the concept and derive the first fundamental properties.

\begin{definition}[Loss of memory on the strong space]\label{def:lom}
We say that $(L_n)_{n\in\mathbb Z}$ has a strong \emph{loss of memory (SLoM) on $V_s$} if there exists a sequence
$a(k)\downarrow 0$ such that for all $j\in\mathbb Z$, all $k\ge 1$, and all $v\in V_s$,
\begin{equation}\label{eq:lom_general}
\bigl\|L^{(j,j+k-1)} v\bigr\|_s \;\le\; a(k)\,\|v\|_s 
\end{equation}
where $L^{(j,j+k-1)}:=L_{(j+k-1)}\circ...\circ L_j$.
\end{definition}

\begin{remark}We remark that by of the uniformity in $k$  of the strong loss of memory assumption one has that if the system has strong loss of memory then the sequence $a(k)$ necessarily goes to $0$ exponentially fast.
Furthermore, since differences of probability measures have zero mass, \eqref{eq:lom_general} implies
\[
\|L^{(j,j+k-1)}\nu - L^{(j,j+k-1)}\nu'\|_s \le a(k)\|\nu-\nu'\|_s
\quad\forall \nu,\nu'\in\mathcal P_s.
\]
\end{remark}

The presence of loss of memory ensure the uniqueness of equivariant measures in the strong space.

\begin{proposition}[Uniqueness of the fixed point in $\ell^\infty$]\label{prop:unique_fixedpoint}
Assume that $(L^{\varepsilon}_n)$ has loss of memory on $V_s$ in the sense of \eqref{eq:lom_general}.
Then $\mathbb T_\varepsilon$ has \emph{at most one} fixed point in $\ell^\infty(\mathbb Z;B_s)$.
Equivalently, there exists at most one bounded equivariant family  $\boldsymbol\mu$ of probability measures $\mu_n$ with
$\sup_n\|\mu_n\|_s<\infty$.
\end{proposition}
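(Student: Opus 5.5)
We argue by comparing two bounded fixed points directly, exploiting that their difference is a bounded sequence of zero-mass measures which the dynamics must both preserve (by equivariance) and contract (by loss of memory). Throughout, uniqueness is understood within the class described in the second sentence of the statement: bounded equivariant families of probability measures, equivalently fixed points of $\mathbb T_\varepsilon$ whose entries all have the same total mass. Without such a normalization the statement cannot be literal, since $\mathbb T_\varepsilon$ is linear and any fixed point $\boldsymbol\mu$ yields the fixed points $c\boldsymbol\mu$.

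Let $\boldsymbol\mu,\boldsymbol\mu'\in\ell^\infty(\mathbb Z;B_s)$ be two fixed points of $\mathbb T_\varepsilon$ consisting of probability measures, and set $\boldsymbol v:=\boldsymbol\mu-\boldsymbol\mu'$. By linearity of $\mathbb T_\varepsilon$, $\boldsymbol v$ is again a fixed point. Moreover $v_n(X)=1-1=0$ for every $n$, so $\boldsymbol v\in\mathcal V_s$, and $M:=\sup_n\|v_n\|_s\le \|\boldsymbol\mu\|_{\mathcal B_s}+\|\boldsymbol\mu'\|_{\mathcal B_s}<\infty$.

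Next we iterate the fixed point relation. From $(\mathbb T_\varepsilon\boldsymbol v)_n=L^\varepsilon_{n-1}v_{n-1}=v_n$, induction gives, for every $n\in\mathbb Z$ and $k\ge1$,
\[
v_n=(\mathbb T_\varepsilon^k\boldsymbol v)_n=L_{n-1}^\varepsilon\circ\cdots\circ L_{n-k}^\varepsilon\, v_{n-k}=L^{(n-k,n-1)}v_{n-k}.
\]
Since $v_{n-k}\in V_s$, the loss of memory estimate \eqref{eq:lom_general} with $j=n-k$ yields
\[
\|v_n\|_s\le a(k)\|v_{n-k}\|_s\le a(k)M .
\]
Letting $k\to\infty$ and using $a(k)\to0$ gives $v_n=0$ for each $n$. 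Hence $\boldsymbol\mu=\boldsymbol\mu'$.

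The only subtle point is the one already addressed: loss of memory acts only on $V_s$, so the argument requires the difference of the two fixed points to have zero mass in every coordinate. This holds for probability families, and more generally for fixed points with prescribed equal masses, since by \eqref{eq:mass_preserving} $\mathbb T_\varepsilon$ preserves total mass and a fixed point therefore has constant mass along $n$. The boundedness assumption is essential, because it supplies the uniform bound $M$ used in the limit $k\to\infty$. Equivalence with equivariant families follows from \eqref{eq:FP_T}.
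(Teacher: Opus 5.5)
Your proof is correct and follows the paper's argument essentially verbatim. You take the difference of two bounded equivariant families of probability measures, observe that it lies in $V_s$ in every coordinate and satisfies $\Delta_n=L^{(n-k,n-1)}\Delta_{n-k}$, and then let $k\to\infty$ in $\|\Delta_n\|_s\le a(k)\sup_m\|\Delta_m\|_s$. Your remark is also valid: because $\mathbb T_\varepsilon$ is linear, the first sentence of the statement only makes sense for probability (or equal-mass) fixed points, and the paper's proof uses this implicitly when it asserts $\Delta_n\in V_s$.
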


\begin{proof}
Let $\boldsymbol\mu,\boldsymbol\nu\in\ell^\infty(\mathbb Z;B_s)$ be two equivariant families and fixed  points of $\mathbb T_\varepsilon$.
Set $\Delta_n:=\mu_n-\nu_n$. Then $\Delta_n\in V_s$ for all $n$ and
\[
\Delta_n=\mu_n-\nu_n=L_{n-1}\mu_{n-1}-L_{n-1}\nu_{n-1}=L_{n-1}\Delta_{n-1}.
\]
Iterating $k$ steps backwards yields
\[
\Delta_n = L^{(n-k,n-1)}\Delta_{n-k}.
\]
Taking $\|\cdot\|_s$ and applying \eqref{eq:lom_general} gives
\[
\|\Delta_n\|_s \le a(k)\,\|\Delta_{n-k}\|_s \le a(k)\,\sup_{m\in\mathbb Z}\|\Delta_m\|_s.
\]
Letting $k\to\infty$ and using $a(k)\to 0$ yields $\|\Delta_n\|_s=0$ for every $n$, hence $\boldsymbol\mu=\boldsymbol\nu$.
\end{proof}

\begin{remark}[Existence via pullback limits]
In applications, one typically combines SLoM  with uniform strong bounds coming from a Lasota--Yorke inequality
to show existence of a bounded fixed point, e.g. by defining
\(
\mu_n:=\lim_{k\to\infty} L^{(n-k,n-1)}\nu
\)
for some $\nu\in\mathcal P_s$ and proving convergence in $\|\cdot\|_s$ using SLoM on $V_s$.
Proposition~\ref{prop:unique_fixedpoint} then yields uniqueness in the bounded class.
\end{remark}



\section{Linear response}\label{linearresponse}
In this section we prove a linear response statement for a sequential family of transfer operators as in the previous setting.
We will  consider $\varepsilon_0>0$ and for $\varepsilon\in [0,\varepsilon_0)$ we consider a sequential family of operators $L^\varepsilon _n$ with associated global maps $\mathbb T_\varepsilon$ having  equivariant measures $\boldsymbol\mu^\varepsilon$. We will also assume the following:

\begin{assumption}[Uniform strong bounds for the equivariant family]\label{ass:mu_bound}
For each $\varepsilon\in [0,\varepsilon_0)$ the family of operators $L_n^\varepsilon$ has an equivariant family $\boldsymbol\mu^\varepsilon\in \mathcal B_s$ and
\begin{equation}\label{eq:mu_uniform_bound}
\sup_{|\varepsilon|\le\varepsilon_0}\ \|\boldsymbol\mu^\varepsilon\|_{\mathcal B_s}
= \sup_{|\varepsilon|\le\varepsilon_0}\ \sup_{n\in\mathbb Z}\|\mu_n^\varepsilon\|_s
\le M_s<\infty.
\end{equation}
\end{assumption}

\begin{assumption}[Strong differentiability of the cocycle]\label{ass:strong_derivative}
There exists a sequence of elements of $B_s$ which we will denote as $(\dot L_n \mu^0_n)_{n\in\mathbb Z} \in B_s$, such that
\begin{equation}\label{eq:diff_unif}
\lim_{\varepsilon\to0}\ \sup_{n\in\mathbb Z}
\left\|
\frac{L_n^\varepsilon \mu^0_n- L_n^0\mu^0_n}{\varepsilon} - \dot L_n \mu^0_n
\right\|_{s} = 0,
\end{equation}
furthermore suppose that 
\begin{equation}\label{eq:dotL_bound}
\sup_{n\in\mathbb Z}\|\dot L_n\mu^0_n\|_{s} < \infty
\end{equation}
and  
\begin{equation}
\lim_{\varepsilon\to0}\sup_{n\in\mathbb Z, v\in \{B_s | \ ||v||_s=1\}}\| (L^\varepsilon_n -L^0_n)v\|_{w}=0.
\end{equation}
\end{assumption}
Loosely speaking, Assumption~\ref{ass:strong_derivative} means that, along the reference family $(\mu^0_n)$, the cocycle admits a first-order expansion in $\varepsilon$, namely
\[
L_n^\varepsilon \mu^0_n \;=\; L_n^0 \mu^0_n \;+\; \varepsilon\,\dot L_n \mu^0_n \;+\; o(\varepsilon)
\quad\text{in }B_s,\ \text{uniformly in }n,
\]
so that one may heuristically view $L_n^\varepsilon \approx L_n^0+\varepsilon\,\dot L_n$ at first order.
We also note that, depending on the perturbation, establishing strong convergence in \eqref{eq:diff_unif} may require the reference densities $\mu_n$ to have one additional degree of regularity beyond Assumption~\ref{ass:mu_bound}; see Section~\ref{sequential} for a concrete instance of this issue and how it can be handled.

\begin{assumption}[Strong exponential loss of memory in $B_s$]\label{ass:uniform_ELoM}
There exist constants $C\ge1$ and $\rho\in(0,1)$ such that for every $\varepsilon\in [0,\varepsilon_0)$, every $m<n$,
and every $v\in V_s$,
\begin{equation}\label{eq:ELoM_eps}
\bigl\|L_{n-1}^\varepsilon \cdots L_m^\varepsilon\, v\bigr\|_s
\le C\rho^{\,n-m}\|v\|_s.
\end{equation}
\end{assumption}

\begin{remark}
Assumption~\ref{ass:mu_bound} is typically obtained from a uniform Lasota-Yorke inequality in $\varepsilon$ (see Appendix \ref{appendixa} for examples on how to obtain this in the deterministic and random setting). In this section we take it as a standing hypothesis.
We also remark that Assumption~\ref{ass:mu_bound}, combined with Assumption~\ref{ass:uniform_ELoM} imply that the pushforward of any measure in $B_s$ has bounded $B_s$ norm.

\end{remark}


\subsection{Existence of the resolvent  on the zero-mass subspace}

In this section we show a kind of Neumann-series  based invertibility of $I-\mathbb T_\varepsilon$ on $\mathcal V_s$ and investigate its basic properties, which will be used to establish our Linear Response result.

\begin{lemma}[Neumann series for the global cocycle]\label{lem:neumann}
Under Assumptions~\ref{ass:uniform_ELoM} and \eqref{eq:mass_preserving}, for each $\varepsilon \in[0,\varepsilon_0)$
the operator $\mathbb T_\varepsilon$ leaves $\mathcal V_s$ invariant and satisfies
\begin{equation}\label{eq:T_power_bound}
\|\mathbb T_\varepsilon^k\|_{\mathcal V_s\to \mathcal V_s} \le C\rho^k\qquad\forall k\ge 1.
\end{equation}
In particular, $I-\mathbb T_\varepsilon$ is invertible on $\mathcal V_s$ and
\begin{equation}\label{eq:resolvent_neumann}
(I-\mathbb T_\varepsilon)^{-1} = \sum_{k=0}^\infty \mathbb T_\varepsilon^k
\quad\text{(convergence in operator norm on $\mathcal V_s$)},
\end{equation}
with the bound
\begin{equation}\label{eq:resolvent_bound}
\|(I-\mathbb T_\varepsilon)^{-1}\|_{\mathcal V_s\to\mathcal V_s} \le \frac{C}{1-\rho}.
\end{equation}
\end{lemma}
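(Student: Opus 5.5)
The proof will be a direct computation with the explicit form of the iterates of $\mathbb T_\varepsilon$, followed by the standard Neumann-series argument.

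\textbf{Invariance of $\mathcal V_s$.} Take $\boldsymbol v\in\mathcal V_s$. Then $(\mathbb T_\varepsilon\boldsymbol v)_n=L^\varepsilon_{n-1}v_{n-1}$. By \eqref{eq:mass_preserving} this has zero total mass, so it lies in $V_s$. The uniform boundedness of the $L_n^\varepsilon$ on $B_s$ gives $\sup_n\|L^\varepsilon_{n-1}v_{n-1}\|_s\le \sup_n\|L_n^\varepsilon\|_{B_s\to B_s}\,\|\boldsymbol v\|_{\mathcal B_s}<\infty$. Hence $\mathbb T_\varepsilon$ is a bounded operator on $\mathcal V_s$ and maps it to itself.

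\textbf{The bound \eqref{eq:T_power_bound}.} First show by induction on $k$ that
\[
(\mathbb T_\varepsilon^k\boldsymbol v)_n = L^\varepsilon_{n-1}\cdots L^\varepsilon_{n-k}\,v_{n-k}.
\]
The inductive step is just the definition, $(\mathbb T_\varepsilon\boldsymbol w)_n=L^\varepsilon_{n-1}w_{n-1}$, applied with $\boldsymbol w=\mathbb T_\varepsilon^{k-1}\boldsymbol v$. Now apply Assumption~\ref{ass:uniform_ELoM} with $m=n-k$ (so $n-m=k\ge1$) to the element $v_{n-k}\in V_s$. This gives $\|(\mathbb T_\varepsilon^k\boldsymbol v)_n\|_s\le C\rho^k\|v_{n-k}\|_s\le C\rho^k\|\boldsymbol v\|_{\mathcal B_s}$. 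Taking the supremum over $n$ yields \eqref{eq:T_power_bound}. This step is the only place where the sequence structure enters. The one point to check is that the constants $C,\rho$ are uniform in $m,n$ and $\varepsilon$, which is exactly what the assumption provides, so this bound on the global operator is uniform.

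\textbf{Invertibility and the resolvent bound.} $\mathcal V_s$ is a closed subspace of the Banach space $\ell^\infty(\mathbb Z;B_s)$, because $V_s$ is closed in $B_s$. Hence $\mathcal V_s$ is itself a Banach space. By \eqref{eq:T_power_bound}, $\sum_{k\ge0}\|\mathbb T_\varepsilon^k\|\le 1+\sum_{k\ge1}C\rho^k\le C/(1-\rho)$, where the last inequality uses $C\ge1$. So the series $\sum_k\mathbb T_\varepsilon^k$ converges absolutely in operator norm to a bounded operator $R$ on $\mathcal V_s$, and $\|R\|\le C/(1-\rho)$, which is \eqref{eq:resolvent_bound}. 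The telescoping identities
\[
(I-\mathbb T_\varepsilon)\sum_{k=0}^N\mathbb T_\varepsilon^k=\Bigl(\sum_{k=0}^N\mathbb T_\varepsilon^k\Bigr)(I-\mathbb T_\varepsilon)=I-\mathbb T_\varepsilon^{N+1}
\]
hold, and $\|\mathbb T_\varepsilon^{N+1}\|\le C\rho^{N+1}\to0$. Letting $N\to\infty$ shows that $R$ is a two-sided inverse of $I-\mathbb T_\varepsilon$ on $\mathcal V_s$, which gives \eqref{eq:resolvent_neumann}.

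I expect no real obstacle in this argument.
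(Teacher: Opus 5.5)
Your proposal is correct and follows essentially the same route as the paper's proof: invariance of $\mathcal V_s$ via mass preservation, the explicit formula for $(\mathbb T_\varepsilon^k\boldsymbol v)_n$ combined with Assumption~\ref{ass:uniform_ELoM} at $m=n-k$ and a supremum over $n$, and then the standard Neumann-series argument. You additionally spell out details the paper leaves implicit, namely the completeness of $\mathcal V_s$, the use of $C\ge 1$ to get the constant $C/(1-\rho)$, and the two-sided telescoping identity.
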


\begin{proof}
Mass preservation \eqref{eq:mass_preserving} implies $L_n^\varepsilon(V_s)\subset V_s$; hence $\mathbb T_\varepsilon(\mathcal V_s)\subset\mathcal V_s$.
Iterating $(\mathbb T_\varepsilon\boldsymbol v)_n=L_{n-1}^\varepsilon v_{n-1}$ yields
\[
(\mathbb T_\varepsilon^k \boldsymbol v)_n = L_{n-1}^\varepsilon \cdots L_{n-k}^\varepsilon\, v_{n-k}.
\]
Since $v_{n-k}\in V_s$, Assumption~\ref{ass:uniform_ELoM} gives
\[
\|(\mathbb T_\varepsilon^k \boldsymbol v)_n\|_s \le C\rho^k \|v_{n-k}\|_s \le C\rho^k \|\boldsymbol v\|_{\mathcal B_s}.
\]
Taking $\sup_n$ yields \eqref{eq:T_power_bound}. Then $\sum_{k\ge0}\mathbb T_\varepsilon^k$ converges in operator norm on $\mathcal V_s$
(because $\sum_{k\ge0}C\rho^k<\infty$), and equals $(I-\mathbb T_\varepsilon)^{-1}$ by the standard Neumann-series argument, giving
\eqref{eq:resolvent_neumann} and \eqref{eq:resolvent_bound}.
\end{proof}


The previous lemma shows the existence of the resolvent operator $I-\mathbb T_\varepsilon$, in the next lemma we prove its stability under perturbation.
The approach is general and axiomatic, however almost all the asumptions we will consider for $\mathbb T_\varepsilon$ are direct consequences of the  Assumptions ~\ref{ass:mu_bound}--\ref{ass:uniform_ELoM} on the operators $L_n^\varepsilon$ listed in the previous section.

\begin{lemma}[$\varepsilon log(\varepsilon)$-Continuity of the resolvent in mixed norm ]\label{lem:resolvent_continuity_mixed}
Let $B_s\hookrightarrow B_w$ be normed spaces with continuous embedding $\|u\|_w\le \|u\|_s$.
Let us denote, as before $\mathcal B_s=\ell^\infty(\mathbb Z;B_s)$ and $\mathcal B_w=\ell^\infty(\mathbb Z;B_w)$, and let
$\mathcal V_s\subset\mathcal B_s$, $\mathcal V_w\subset\mathcal B_w$ be the zero-mass subspaces.

Let $\varepsilon \in[0,\varepsilon_0)$ and $\mathbb T_0,\mathbb T_\varepsilon$ be linear operators such that:
\begin{enumerate}
\item[(A1)] (\emph{Strong exponential memory loss on }\ $\mathcal V_s$)
There exist $C\ge 1$ and $\rho\in(0,1)$ such that for all $k\ge 1$ and all $v\in\mathcal V_s$, $\varepsilon\in[0,\varepsilon_0)$
\[
\|\mathbb T_\varepsilon^k v\|_{\mathcal B_s}\le C\rho^k\|v\|_{\mathcal B_s}.
\]
\item[(A2)] (\emph{Weak power-boundedness on }\ $\mathcal V_w$)
There exists $M_w\ge 1$ such that for all $k\ge 0$,
\[
\|\mathbb T_\varepsilon^k\|_{\mathcal V_w\to\mathcal V_w}\le M_w.
\]
(In many Markov settings one can take $M_w=1$.)
\item[(A3)] (\emph{Mixed closeness})
\[
\delta_\varepsilon:=\|\mathbb T_\varepsilon-\mathbb T_0\|_{\mathcal B_s\to\mathcal B_w}\to 0
\]
as $\varepsilon \to 0$.
\end{enumerate}

Then there exists a constant $K$ depending only on $C,\rho,M_w$ such that, 
\begin{equation}\label{eq:resolvent_delta_log_delta}
\|(I-\mathbb T_\varepsilon)^{-1}-(I-\mathbb T_0)^{-1}\|_{\mathcal V_s\to\mathcal V_w}
\le
K\,\delta_\varepsilon\Bigl(1+\log\frac{1}{\delta_\varepsilon }\Bigr).
\end{equation}
In particular,
\[
\|(I-\mathbb T_\varepsilon)^{-1}-(I-\mathbb T_0)^{-1}\|_{\mathcal V_s\to\mathcal V_w}\xrightarrow[\varepsilon\to 0]{}0.
\]
\end{lemma}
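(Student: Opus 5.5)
We plan to compare the two Neumann series of Lemma~\ref{lem:neumann} term by term and to cut the series at a level $N$ chosen according to $\delta_\varepsilon$. By (A1), both resolvents exist on $\mathcal V_s$ and are given there by $\sum_{k\ge0}\mathbb T_\varepsilon^k$ and $\sum_{k\ge0}\mathbb T_0^k$, each series converging in $\mathcal V_s\to\mathcal V_s$ norm. Since $\|\cdot\|_w\le\|\cdot\|_s$, the same series also converge in $\mathcal V_s\to\mathcal V_w$ norm. For $v\in\mathcal V_s$ we therefore write
\[
\bigl[(I-\mathbb T_\varepsilon)^{-1}-(I-\mathbb T_0)^{-1}\bigr]v=\sum_{k=1}^{N}(\mathbb T_\varepsilon^k-\mathbb T_0^k)v+\sum_{k>N}(\mathbb T_\varepsilon^k-\mathbb T_0^k)v,
\]
where $N\ge0$ is to be fixed later.

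For the tail we use only the strong contraction. By (A1), each term is bounded in the strong norm, and hence in the weak norm, by $2C\rho^k\|v\|_{\mathcal B_s}$. The tail is therefore at most $\frac{2C}{1-\rho}\rho^{N+1}\|v\|_{\mathcal B_s}$.

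For the head we use the telescoping identity
\[
\mathbb T_\varepsilon^k-\mathbb T_0^k=\sum_{j=0}^{k-1}\mathbb T_\varepsilon^{\,k-1-j}(\mathbb T_\varepsilon-\mathbb T_0)\mathbb T_0^{\,j}.
\]
Reading right to left, we proceed in three steps. First, $\mathbb T_0^j v\in\mathcal V_s$ has strong norm at most $C\rho^j\|v\|$ (with $C\ge1$ covering $j=0$). Next, $\mathbb T_\varepsilon-\mathbb T_0$ maps it into $\mathcal B_w$ with weak norm at most $\delta_\varepsilon C\rho^j\|v\|$. Finally, $\mathbb T_\varepsilon^{k-1-j}$ is applied in the weak norm using (A2). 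One point to check here is that $(\mathbb T_\varepsilon-\mathbb T_0)\mathbb T_0^jv$ indeed lies in $\mathcal V_w$, so that (A2) applies. This holds because both operators preserve zero mass, which follows from mass preservation \eqref{eq:mass_preserving}, as in the proof of Lemma~\ref{lem:neumann}. Summing over $j$ gives
\[
\|(\mathbb T_\varepsilon^k-\mathbb T_0^k)v\|_{\mathcal B_w}\le M_wC\delta_\varepsilon\tfrac{1}{1-\rho}\|v\|_{\mathcal B_s},
\]
uniformly in $k$. Summing over $k\le N$, the head is therefore bounded by $\frac{M_wC}{1-\rho}N\delta_\varepsilon\|v\|$.

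Combining the two pieces, the total is at most $\frac{C}{1-\rho}\bigl(M_wN\delta_\varepsilon+2\rho^{N+1}\bigr)$. If $\delta_\varepsilon\ge1$, we take $N=0$ and the bound is trivial. Otherwise we choose
\[
N=\Bigl\lceil \frac{\log(1/\delta_\varepsilon)}{\log(1/\rho)}\Bigr\rceil,
\]
so that $\rho^{N}\le\delta_\varepsilon$ and $N\le 1+\log(1/\delta_\varepsilon)/\log(1/\rho)$. This yields the bound $K\delta_\varepsilon(1+\log(1/\delta_\varepsilon))$ with $K$ depending only on $C,\rho,M_w$; the case $\delta_\varepsilon=0$ is trivial.

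The main obstacle is the head estimate. Mixed-norm closeness (A3) cannot be iterated directly, because after one application of $\mathbb T_\varepsilon-\mathbb T_0$ we only control the weak norm. This is why the telescoping sum must be ordered so that all strong contraction is used before the perturbation, while only weak power-boundedness is needed after it. The resulting linear-in-$N$ loss is exactly what produces the logarithm. The convergence to zero as $\varepsilon\to0$ then follows since $\delta\log(1/\delta)\to0$.
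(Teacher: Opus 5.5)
Your proposal is correct and is essentially the paper's proof: the same telescoping identity for $\mathbb T_\varepsilon^k-\mathbb T_0^k$, truncation of the two Neumann series at $N$, a head bound linear in $N\delta_\varepsilon$ via (A1)--(A3), a geometric tail via (A1) and the embedding, and the choice $N=\lceil\log(1/\delta_\varepsilon)/\log(1/\rho)\rceil$. You are slightly more careful than the paper in checking that $(\mathbb T_\varepsilon-\mathbb T_0)\mathbb T_0^jv$ has zero mass so that (A2) applies; however, your remark that the case $\delta_\varepsilon\ge1$ is trivial holds only if $\log$ is read as $\log^+$ (for $\delta_\varepsilon\ge e$ the stated right-hand side is nonpositive), so the estimate should be understood for small $\delta_\varepsilon$, as the paper does implicitly.
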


\begin{proof}
We recall that by Assumption (A1) and Lemma \ref{lem:neumann} we get $\|\mathbb T_\varepsilon^k\|_{\mathcal V_s\to\mathcal V_s}\le C\rho^k$, hence
$\sum_{k\ge 0}\mathbb T_\varepsilon^k$ converges in operator norm on $\mathcal V_s$, and equals $(I-\mathbb T_\varepsilon)^{-1}$.

\emph{Step 1: Telescoping formula for powers.}
For every $k\ge 1$ we have the following alternative expression for $\mathbb T_\varepsilon^k-\mathbb T_0^k$
\begin{equation}\label{eq:power_telescoping}
\mathbb T_\varepsilon^k-\mathbb T_0^k
=
\sum_{j=0}^{k-1}\mathbb T_\varepsilon^{k-1-j}(\mathbb T_\varepsilon-\mathbb T_0)\mathbb T_0^{j}.
\end{equation}
This identity follows by a standard telescoping argument: write
$\mathbb T_\varepsilon^k-\mathbb T_0^k=(\mathbb T_\varepsilon^{k-1}-\mathbb T_0^{k-1})\mathbb T_\varepsilon+\mathbb T_0^{k-1}(\mathbb T_\varepsilon-\mathbb T_0)$
and iterate (equivalently, insert and subtract $\mathbb T_\varepsilon^{k-1-j}\mathbb T_0^{j+1}$ for $j=0,\dots,k-1$) to obtain~\eqref{eq:power_telescoping}.

\emph{Step 2: Truncation of the resolvent series.}
For $N\ge 1$ write
\[
(I-\mathbb T_\varepsilon)^{-1}-(I-\mathbb T_0)^{-1}
=
\sum_{k=0}^{N-1}(\mathbb T_\varepsilon^k-\mathbb T_0^k)
\;+\;
\sum_{k=N}^{\infty}(\mathbb T_\varepsilon^k-\mathbb T_0^k).
\]
We estimate the two sums separately as operators $\mathcal V_s\to\mathcal V_w$.

\emph{Step 3: Use (A3) to estimate of the finite sum.}
Consider $v\in\mathcal V_s$. Using \eqref{eq:power_telescoping},
\[
\|(\mathbb T_\varepsilon^k-\mathbb T_0^k)v\|_{\mathcal B_w}
\le
\sum_{j=0}^{k-1}
\|\mathbb T_\varepsilon^{k-1-j}\|_{\mathcal V_w\to\mathcal V_w}\,
\|\mathbb T_\varepsilon-\mathbb T_0\|_{\mathcal B_s\to\mathcal B_w}\,
\|\mathbb T_0^j v\|_{\mathcal B_s}.
\]
By (A2), (A3), and (A1) for $\mathbb T_0$,
\[
\|(\mathbb T_\varepsilon^k-\mathbb T_0^k)v\|_{\mathcal B_w}
\le
\sum_{j=0}^{k-1}
M_w\,\delta_\varepsilon\, C\rho^j \|v\|_{\mathcal B_s}
\le
\delta_\varepsilon\,C\,\|v\|_{\mathcal B_s}\,\sum_{j=0}^{k-1} M_w\rho^j.
\]
Hence, for $k\le N$,
\[
\|(\mathbb T_\varepsilon^k-\mathbb T_0^k)v\|_{\mathcal B_w}
\le
\delta_\varepsilon\,C\,\|v\|_{\mathcal B_s}\, M_w\cdot \frac{1}{1-\rho}.
\]
Therefore,
\[
\left\|\sum_{k=0}^{N-1}(\mathbb T_\varepsilon^k-\mathbb T_0^k)v\right\|_{\mathcal B_w}
\le
K_1\,\delta_\varepsilon\, N\,\|v\|_{\mathcal B_s},
\]
for a constant $K_1$ depending only on $C,\rho,M_w$.

\emph{Step 4: Estimate of the tail by the strong decay.}
For $k\ge N$ we simply bound
\[
\|(\mathbb T_\varepsilon^k-\mathbb T_0^k)v\|_{\mathcal B_w}
\le
\|\mathbb T_\varepsilon^k v\|_{\mathcal B_w}+\|\mathbb T_0^k v\|_{\mathcal B_w}
\le
\bigl(\|\mathbb T_\varepsilon^k v\|_{\mathcal B_s}+\|\mathbb T_0^k v\|_{\mathcal B_s}\bigr)
\le
2C\rho^k\|v\|_{\mathcal B_s},
\]
using (A1) for both $\mathbb T_0,\mathbb T_\varepsilon$ and the embedding $\|\cdot\|_w\le \|\cdot\|_s$.
Summing for $k\ge N$ yields
\[
\left\|\sum_{k=N}^{\infty}(\mathbb T_\varepsilon^k-\mathbb T_0^k)v\right\|_{\mathcal B_w}
\le
K_2\,\rho^{N}\,\|v\|_{\mathcal B_s},
\qquad
K_2:=\frac{2C}{1-\rho}.
\]

\emph{Step 5: Optimisation and conclusion.}
Combining the finite-sum and tail estimates gives
\[
\|( (I-\mathbb T_\varepsilon)^{-1}-(I-\mathbb T_0)^{-1})v\|_{\mathcal B_w}
\le
K_1\,\delta_\varepsilon\,N\,\|v\|_{\mathcal B_s}
+
K_2\,\rho^N\,\|v\|_{\mathcal B_s}.
\]
Taking the supremum over $\|v\|_{\mathcal B_s}\le 1$ yields the operator bound
\[
\|(I-\mathbb T_\varepsilon)^{-1}-(I-\mathbb T_0)^{-1}\|_{\mathcal V_s\to\mathcal V_w}
\le
K_1\,\delta_\varepsilon\,N + K_2\,\rho^N.
\]
Choose $N=\left\lceil \frac{\log(1/\delta_\varepsilon)}{-\log\rho}\right\rceil$, so that $\rho^N \sim \delta_\varepsilon$ and $N\sim 1+\log(1/\delta_\varepsilon)$.
This gives \eqref{eq:resolvent_delta_log_delta}, hence the desired bounds and continuity as $\delta_\varepsilon\to 0$.
\end{proof}

\subsection{Linear response and the series formula}

We are now ready to  derive the discrete-time linear response formula as a resolvent applied to a forcing term.
The argument is very similar  as the ones typically used in the autonomous case, but carried out for the fixed-point equation of the global transfer operator on the sequence space. This leads to obtaining rather compact and interpretable formulas.



\begin{theorem}[Linear response via the global-map resolvent]\label{thm:LR_strong}
Assume \eqref{eq:mass_preserving}, the weak power-boundedness assumption (A2) and Assumptions~\ref{ass:mu_bound}--\ref{ass:uniform_ELoM}.

 Define the difference quotients
\[
\boldsymbol h^\varepsilon := \frac{\boldsymbol\mu^\varepsilon-\boldsymbol\mu^0}{\varepsilon}\in\mathcal B_s,
\qquad 0<\varepsilon\le\varepsilon_0.
\]
Then:
\begin{enumerate}
\item The family $(\boldsymbol h^\varepsilon)_{\varepsilon}$ is bounded in $\mathcal B_s$ and admits a limit
\[
\boldsymbol\eta := \lim_{\varepsilon\to0}\boldsymbol h^\varepsilon
\quad\text{in }\mathcal B_w.
\]
\item The limit $\boldsymbol\eta\in\mathcal V_s$ is the unique solution in $\mathcal V_s$ of the resolvent equation
\begin{equation}\label{eq:LR_resolvent}
(I-\mathbb T)\boldsymbol\eta = \mathbb S^{-1}\boldsymbol g,
\qquad
\text{where}\ 
\mathbb T:=\mathbb T_0,\ \boldsymbol g=(g_n)_{n\in \mathbb Z} , \ g_n:=\dot L_n \mu_n.
\end{equation}
Equivalently,
\begin{equation}\label{eq:LR_neumann}
\boldsymbol\eta = (I-\mathbb T)^{-1}\mathbb S^{-1}\boldsymbol g
= \sum_{k=0}^{\infty}\mathbb T^k \mathbb S^{-1}\boldsymbol g
\quad\text{in }\mathcal B_s.
\end{equation}
\item (Coordinate-wise series formula.) For every $n\in\mathbb Z$,
\begin{equation}\label{eq:LR_series}
\eta_n = \sum_{k=1}^{\infty} L_{n-1}\cdots L_{n-k}\, \dot L_{n-k-1}\,\mu_{n-k-1},
\end{equation}
and the series converges absolutely in $B_s$, uniformly in $n$.
\end{enumerate}
\end{theorem}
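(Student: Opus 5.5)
We follow the standard autonomous argument, transported to the fixed-point equation \eqref{eq:FP_T} on the sequence space. From $\boldsymbol\mu^\varepsilon=\mathbb T_\varepsilon\boldsymbol\mu^\varepsilon$ and $\boldsymbol\mu^0=\mathbb T_0\boldsymbol\mu^0$ we obtain
\[
(I-\mathbb T_\varepsilon)(\boldsymbol\mu^\varepsilon-\boldsymbol\mu^0)=(\mathbb T_\varepsilon-\mathbb T_0)\boldsymbol\mu^0 .
\]
Both sides lie in $\mathcal V_s$, since the $\mu_n^\varepsilon$ are probability measures and the $L_n^\varepsilon$ preserve mass. Note also that $((\mathbb T_\varepsilon-\mathbb T_0)\boldsymbol\mu^0)_n=(L^\varepsilon_{n-1}-L^0_{n-1})\mu^0_{n-1}$. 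Dividing by $\varepsilon$ and applying Lemma~\ref{lem:neumann} gives the representation
\[
\boldsymbol h^\varepsilon=(I-\mathbb T_\varepsilon)^{-1}\boldsymbol q^\varepsilon,\qquad \boldsymbol q^\varepsilon:=\frac{(\mathbb T_\varepsilon-\mathbb T_0)\boldsymbol\mu^0}{\varepsilon}.
\]
By Assumption~\ref{ass:strong_derivative}, $\boldsymbol q^\varepsilon\to\mathbb S^{-1}\boldsymbol g$ in $\mathcal B_s$, uniformly in $n$. Here $\boldsymbol g\in\mathcal V_s$: it is a strong limit of zero-mass elements and $V_s$ is closed. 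In particular $\sup_{\varepsilon}\|\boldsymbol q^\varepsilon\|_{\mathcal B_s}<\infty$ for small $\varepsilon$, using \eqref{eq:dotL_bound}. Combined with the uniform resolvent bound \eqref{eq:resolvent_bound}, this yields boundedness of $\boldsymbol h^\varepsilon$ in $\mathcal B_s$, which is the first half of item 1.

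For convergence, set $\boldsymbol\eta:=(I-\mathbb T_0)^{-1}\mathbb S^{-1}\boldsymbol g\in\mathcal V_s$ and split
\[
\boldsymbol h^\varepsilon-\boldsymbol\eta=(I-\mathbb T_\varepsilon)^{-1}(\boldsymbol q^\varepsilon-\mathbb S^{-1}\boldsymbol g)+\bigl[(I-\mathbb T_\varepsilon)^{-1}-(I-\mathbb T_0)^{-1}\bigr]\mathbb S^{-1}\boldsymbol g .
\]
The first term tends to $0$ in $\mathcal B_s$, hence in $\mathcal B_w$, by \eqref{eq:resolvent_bound}. The second term tends to $0$ in $\mathcal B_w$ by Lemma~\ref{lem:resolvent_continuity_mixed}, once its hypotheses are verified. (A1) is Assumption~\ref{ass:uniform_ELoM} read through Lemma~\ref{lem:neumann}, and (A2) is assumed. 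For (A3), note that $\|\mathbb T_\varepsilon-\mathbb T_0\|_{\mathcal B_s\to\mathcal B_w}=\sup_n\|L^\varepsilon_{n-1}-L^0_{n-1}\|_{B_s\to B_w}$, which tends to $0$ by the last condition of Assumption~\ref{ass:strong_derivative}. This proves item 1 and the identity \eqref{eq:LR_neumann}, the Neumann series converging in operator norm on $\mathcal V_s$ by Lemma~\ref{lem:neumann}.

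Uniqueness in item 2 is immediate, because $I-\mathbb T$ is injective on $\mathcal V_s$ by Lemma~\ref{lem:neumann}. For item 3, write out coordinates: $(\mathbb S^{-1}\boldsymbol g)_n=g_{n-1}$, and $(\mathbb T^k\boldsymbol v)_n=L_{n-1}\cdots L_{n-k}v_{n-k}$. Hence the $k$-th term of \eqref{eq:LR_neumann} at site $n$ equals $L_{n-1}\cdots L_{n-k}\dot L_{n-k-1}\mu_{n-k-1}$, with the $k=0$ term being $g_{n-1}$. (I note the stated series starts at $k=1$; I expect the $k=0$ term $\dot L_{n-1}\mu_{n-1}$ must be included, or the indexing adjusted with the empty product as identity, and I would check this against the statement.) Absolute convergence in $B_s$, uniform in $n$, follows from \eqref{eq:ELoM_eps} applied to $g_{n-k-1}\in V_s$, which gives the summable bound $C\rho^k\sup_m\|g_m\|_s$.

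The main obstacle is that $\boldsymbol h^\varepsilon$ converges only in $\mathcal B_w$. The reason is that resolvent continuity holds only in the mixed norm, because the embedding $\mathcal B_s\hookrightarrow\mathcal B_w$ is not compact. One must therefore take care to apply Lemma~\ref{lem:resolvent_continuity_mixed} to the fixed vector $\mathbb S^{-1}\boldsymbol g$, and to handle the $\varepsilon$-dependent forcing with the strong uniform resolvent bound. The remaining care is to check that every vector involved is genuinely zero-mass, so that the resolvent estimates apply.
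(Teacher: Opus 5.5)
Your proposal is correct and follows essentially the same route as the paper. It uses the same quotient identity $(I-\mathbb T_\varepsilon)\boldsymbol h^\varepsilon=\mathbb S^{-1}\boldsymbol g^\varepsilon$ (your $\boldsymbol q^\varepsilon$), the uniform bound \eqref{eq:resolvent_bound} for boundedness, and the same two-term split with Lemma~\ref{lem:resolvent_continuity_mixed} applied to the fixed vector $\mathbb S^{-1}\boldsymbol g$.

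Your suspicion about item 3 is also justified. The $k=0$ term $\dot L_{n-1}\mu_{n-1}$ must be included: the paper's own coordinate computation produces it, and the later formulas \eqref{eq:response_series_compact_noise} and \eqref{eq:EM_response_series} contain it explicitly. So \eqref{eq:LR_series} as printed is off by one.
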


\begin{proof}
From $\mathbb S\boldsymbol\mu^\varepsilon=\mathbb F_\varepsilon(\boldsymbol\mu^\varepsilon)$ and
$\mathbb S\boldsymbol\mu=\mathbb F(\boldsymbol\mu)$, subtract and divide by $\varepsilon$:
\begin{equation}\label{eq:quotient_identity}
\mathbb S\boldsymbol h^\varepsilon
=
\mathbb F_\varepsilon(\boldsymbol h^\varepsilon)
+
\boldsymbol g^\varepsilon,
\qquad  \boldsymbol g^\varepsilon:=(g_n^\varepsilon)_{n\in \mathbb Z}, \
(g_n^\varepsilon):=\frac{L_n^\varepsilon-L_n}{\varepsilon}\,\mu_n.
\end{equation}
Applying $\mathbb S^{-1}$ gives
\begin{equation}\label{eq:Res_eps_again}
(I-\mathbb T_\varepsilon)\boldsymbol h^\varepsilon = \mathbb S^{-1}\boldsymbol g^\varepsilon,
\qquad \mathbb T_\varepsilon=\mathbb S^{-1}\mathbb F_\varepsilon.
\end{equation}

Note that by Assumption~\ref{ass:strong_derivative},
\[
\|\boldsymbol g^\varepsilon-\boldsymbol g\|_{\mathcal B_s}
=
\sup_n\left\|
\frac{L_n^\varepsilon\mu_n-L_n \mu_n}{\varepsilon}-\dot L_n\mu_n
\right\|_s
\to 0
\]
where $g_n=\dot L_n\mu_n$ and $\boldsymbol g:=(g_n)_{n\in \mathbb Z}$.

Moreover, \eqref{eq:mass_preserving} implies $(\dot L_n\mu_n)(X)=0$,
hence $\boldsymbol g^\varepsilon\in\mathcal V_s$ and similarly $\boldsymbol g\in\mathcal V_s$ for $\varepsilon$ small.

\noindent \emph{Step 1: uniform invertibility on $\mathcal V_s$ and boundedness of $\boldsymbol h^\varepsilon$.}
By Lemma~\ref{lem:neumann}, for each $|\varepsilon|\le\varepsilon_0$ the inverse $(I-\mathbb T_\varepsilon)^{-1}$ exists on $\mathcal V_s$ and
\[
\|(I-\mathbb T_\varepsilon)^{-1}\|_{\mathcal V_s\to\mathcal V_s}\le \frac{C}{1-\rho}.
\]
Applying this to \eqref{eq:Res_eps_again} yields
\[
\|\boldsymbol h^\varepsilon\|_{\mathcal B_s}
\le \|(I-\mathbb T_\varepsilon)^{-1}\|\,\|\mathbb S^{-1}\boldsymbol g^\varepsilon\|_{\mathcal B_s}
\le \frac{C}{1-\rho}\,\|\boldsymbol g^\varepsilon\|_{\mathcal B_s}.
\]
Since $\boldsymbol g^\varepsilon\to \boldsymbol g$ in $\mathcal B_s$, the family $(\boldsymbol h^\varepsilon)$ is bounded in $\mathcal B_s$.

\noindent \emph{Step 2: identification of the limit and convergence of $\boldsymbol h^\varepsilon$.}

From \eqref{eq:Res_eps_again} we get 
\begin{equation}
(I-\mathbb T_\varepsilon)\boldsymbol h^\varepsilon = \mathbb S^{-1}(\boldsymbol g^\varepsilon- \boldsymbol g)+ \mathbb S^{-1} \boldsymbol g
\end{equation}

and

\begin{equation}
\boldsymbol h^\varepsilon = (I-\mathbb T_\varepsilon)^{-1} \mathbb S^{-1}(\boldsymbol g^\varepsilon- \boldsymbol g)+ (I-\mathbb T_\varepsilon)^{-1} \mathbb S^{-1} \boldsymbol g .
\end{equation}
Since $(I-\mathbb T_\varepsilon)^{-1}$ is uniformly bounded and $(\boldsymbol g^\varepsilon- \boldsymbol g)\to 0$ then $(I-\mathbb T_\varepsilon)^{-1} \mathbb S^{-1}(\boldsymbol g^\varepsilon- \boldsymbol g)\to 0$ in the strong norm.
By Lemma \ref{lem:resolvent_continuity_mixed} we also have that $ (I-\mathbb T_\varepsilon)^{-1} \mathbb S^{-1} \boldsymbol g \to  (I-\mathbb T_0)^{-1} \mathbb S^{-1} \boldsymbol g $ in the weak norm.
All together this implies $\boldsymbol h^{\varepsilon} \to \boldsymbol \eta$ in the weak norm as claimed.

\

\noindent \emph{Step 3: Neumann series and the coordinate-wise formula.}
Equation \eqref{eq:LR_resolvent} implies \eqref{eq:LR_neumann} by Lemma~\ref{lem:neumann} at $\varepsilon=0$.
Taking the $n$-th coordinate, note that $(\mathbb S^{-1}\boldsymbol g)_n=g_{n-1}=\dot L_{n-1}\mu_{n-1}$ and
\[
(\mathbb T^k \boldsymbol v)_n = L_{n-1}\cdots L_{n-k}\, v_{n-k}.
\]
Therefore
\[
\eta_n = \sum_{k=0}^{\infty} L_{n-1}\cdots L_{n-k}\, (\mathbb S^{-1}\boldsymbol g)_{n-k}
= \sum_{k=1}^{\infty} L_{n-1}\cdots L_{n-k}\, \dot L_{n-k-1}\mu_{n-k-1},
\]
which is \eqref{eq:LR_series}.

Finally, absolute convergence in $B_s$ follows from Assumptions~\ref{ass:uniform_ELoM}, \ref{ass:mu_bound}, and \eqref{eq:dotL_bound}:
each summand has zero mass and
\[
\|L_{n-1}\cdots L_{n-k}\dot L_{n-k-1}\mu_{n-k-1}\|_s
\le
C\rho^k \Big(\sup_j\|\dot L_j\mu_{j}\|_{s}\Big)
\]
so, the series is dominated by a geometric series uniformly in $n$.
\end{proof}

\section{Linear response for sequential composition of expanding maps}\label{sequential}

In this section we apply Theorem \ref{thm:LR_strong} to establish linear response for sequential composition of deterministic expanding maps.
The kind systems we will consider  are sequential composition of maps that are close to a certain expanding map $T_0$ (the admitted distance will be estimated by the properties of $T_0$), and we will consider additive deterministic perturbations of the maps.
In the following subsections we will see how to verify the assumptions needed to apply Theorem \ref{thm:LR_strong}, starting from the exponential loss of memory, { which will be provided by the results of Appendix \ref{appendixa}. The results of
Appendix~\ref{appendixa} would also allow one to treat similar examples in
which the expanding maps vary slowly along the sequence (see  Theorem~\ref{thm:slow-variation-loss-memory}). To avoid a repetition of nearly identical arguments, we
do not spell out this extension in detail.}

\subsection{Uniform exponential loss of memory for sequential composition of expanding maps near $T_0$}\label{5.1}

\noindent {\bf General setting and assumptions. } In this section we set and work with
\[
B_s = W^{1,1}(\mathbb S^1),\qquad B_w = L^1(\mathbb S^1).
\]

Let $T_0:\mathbb S^1\to\mathbb S^1$ be a $C^4$ expanding map of degree $n\ge2$ such that
\[
\lambda_0:=\inf_{\mathbb S^1}|T_0'|>1,\qquad 
M_0:=\|T_0'\|_\infty<\infty,\qquad 
M_2:=\|T_0''\|_\infty<\infty,
\]
and assume that $T_0$ is mixing. 
We recall that under these assumptions the transfer  operator $L_0$ associated to $T_0$ has a spectral gap on $W^{1,1}$
(equivalently, exponential contraction on $V_s:=\{f\in W^{1,1}:\int f\,dm=0\}$).

Fix constants $\lambda_1\in(0,1)$ and $B\ge0$ such that the one-step Lasota--Yorke inequality holds for $L_0$ on
$B_s=W^{1,1}(\mathbb S^1)$ and $B_w=L^1(\mathbb S^1)$:
\begin{equation}\label{eq:LY_T0_W11}
\|L_0 f\|_{L^1}\le \|f\|_{L^1},\qquad
\|L_0 f\|_{W^{1,1}}\le \lambda_1\|f\|_{W^{1,1}}+B\|f\|_{L^1},\qquad f\in W^{1,1}.
\end{equation}
In particular in this case, the assumption(A2) is satisfied with $M_w=1$.
For expanding maps as above such constants are explicitly available (see for example  \cite[Lemma~18]{FG2025}).

Let $M\in\mathbb N$ be such that
\begin{equation}\label{eq:M_choice_ML2}
\lambda_1^{M}\le \frac{1}{10\left(\frac{B}{1-\lambda_1}+1\right)}
\quad\text{and}\quad
\|L_0^{M}v\|_{L^1}\le \frac{1}{10B}\|v\|_{W^{1,1}}\qquad \forall v\in V_s.
\end{equation}
(Existence of such an $M$ follows for example from \cite[Proposition~19]{FG2025})

Let \begin{equation}\label{eq:CT0_explicit_final}
C(T_0)=2
n\bigl(\|T_0'\|_\infty+\lambda_0-1\bigr)\left(\frac{1}{\lambda_0^2}+\frac{M_2}{\lambda_0^3}+\frac{1}{\lambda_0}\right)
\end{equation}
 be the mixed-norm constant from Proposition~\ref{prop:mixed_continuity_covering_self} in Appendix B, namely for any $T$ such that $\|T_0-T\|_{C^2} \leq \lambda_0 -1$

\begin{equation}\label{eq:mixed_bound_CT0}
\|L_T-L_0\|_{W^{1,1}\to L^1}\le C(T_0)\,\|T-T_0\|_{C^2}.
\end{equation}

\begin{lemma}[Unif. exponential loss of memory for seq. expanding maps.]
\label{lem:ELoM_near_T0}
Fix a number $\delta_\ast>0$ such that
\begin{equation}\label{eq:delta_star_basic}
0<\delta_\ast<\lambda_0-1.
\end{equation}
Assume in addition that $\delta_\ast$ is small enough so that
\begin{equation}\label{eq:delta_star_ML3_revised}
C(T_0)\,\delta_\ast
\ \le\
\frac{7(1-\lambda_1)^2}{10MB\left(\frac{1}{1-\lambda_1}+B\right)}.
\end{equation}

Define the class of maps
\begin{equation}\label{eq:ST0_def_revised}
\mathcal S_{T_0}(\delta_\ast):=
\Bigl\{\,T:\mathbb S^1\to\mathbb S^1\ \text{$C^3$ uniformly expanding of degree $n$}\ :\ \|T-T_0\|_{C^2}< \delta_\ast \Bigr\}.
\end{equation}

Let $(T_i)_{i\ge0}$ be any sequence with $T_i\in\mathcal S_{T_0}(\delta_\ast)$ for all $i$, and let $L_i:=L_{T_i}$.
There exist constants $C_{\mathrm{ELoM}}\ge1$ and $\lambda>0$ such that for all $j,n\in\mathbb N$ and all $g\in V_s$,
\begin{equation}\label{eq:ELoM_conclusion_revised}
\|L^{(j,j+n-1)}g\|_{W^{1,1}}\le C_{\mathrm{ELoM}}e^{-\lambda n}\|g\|_{W^{1,1}}.
\end{equation}
\end{lemma}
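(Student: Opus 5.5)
The approach is a perturbative block argument around $L_0$. I will show that any block of $M$ consecutive sequential operators contracts $V_s$ by a fixed factor $<1$ in a suitably weighted norm, and then iterate over blocks.

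\textbf{Step 1: uniform Lasota--Yorke inequality.} Each $T_i\in\mathcal S_{T_0}(\delta_\ast)$ satisfies $\inf|T_i'|\ge\lambda_0-\delta_\ast>1$ and has $C^2$ norms uniformly bounded. The standard Lasota--Yorke computation (Appendix~\ref{appendixa}) then gives constants $\lambda_1'<1$ and $B'$, uniform in $i$, such that
\[
\|L_i f\|_{W^{1,1}}\le\lambda_1'\|f\|_{W^{1,1}}+B'\|f\|_{L^1},\qquad \|L_if\|_{L^1}\le\|f\|_{L^1}.
\]
For simplicity I would arrange, shrinking $\delta_\ast$ if needed, that the same $\lambda_1,B$ as in \eqref{eq:LY_T0_W11} work. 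Iterating yields
\[
\|L^{(j,j+k-1)}f\|_{W^{1,1}}\le\lambda_1^k\|f\|_{W^{1,1}}+\frac{B}{1-\lambda_1}\|f\|_{L^1}
\]
for all $j$ and $k$.

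\textbf{Step 2: weak contraction over a block of length $M$.} Take $v\in V_s$. Using the telescoping identity \eqref{eq:power_telescoping}, in its sequential form
\[
L^{(j,j+M-1)}-L_0^M=\sum_{i=0}^{M-1} L^{(j+i+1,j+M-1)}(L_{j+i}-L_0)L_0^{i},
\]
together with $L^1$-contractivity, \eqref{eq:mixed_bound_CT0}, and the uniform bound $\|L_0^i v\|_{W^{1,1}}\le(1+\frac{B}{1-\lambda_1})\|v\|_{W^{1,1}}$, I get
\[
\|L^{(j,j+M-1)}v-L_0^Mv\|_{L^1}\le MC(T_0)\delta_\ast\Bigl(1+\frac{B}{1-\lambda_1}\Bigr)\|v\|_{W^{1,1}}.
\]
Combined with \eqref{eq:M_choice_ML2} and the smallness \eqref{eq:delta_star_ML3_revised}, this gives
\[
\|L^{(j,j+M-1)}v\|_{L^1}\le c_w\|v\|_{W^{1,1}}
\]
with $c_w$ small, of order $\frac{1}{10B}$ plus a small multiple of $\frac1B$.

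\textbf{Step 3: contraction in a weighted norm.} Using Step 1 twice, over two consecutive blocks $v\mapsto L^{(j,j+2M-1)}v$, I obtain
\[
\|L^{(j,j+2M-1)}v\|_{W^{1,1}}\le\lambda_1^M\|L^{(j,j+M-1)}v\|_{W^{1,1}}+\frac{B}{1-\lambda_1}\|L^{(j,j+M-1)}v\|_{L^1}.
\]
Bounding the first term by $\lambda_1^M(1+\frac{B}{1-\lambda_1})\|v\|_{W^{1,1}}\le\frac1{10}\|v\|_{W^{1,1}}$ and the second by Step 2 should give a factor $\theta<1$; this is exactly where the particular constants in \eqref{eq:M_choice_ML2} and \eqref{eq:delta_star_ML3_revised} are designed to fit.

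The main obstacle is the factor $\frac{B}{1-\lambda_1}$ multiplying $c_w\approx\frac1{10B}$, which leaves a leftover of size $\frac{1}{10(1-\lambda_1)}$ that need not be small. If the naive estimate fails I would instead use the weighted norm
\[
\|f\|_\ast=\|f\|_{W^{1,1}}+A\|f\|_{L^1},
\]
with $A$ of order $\frac{B}{1-\lambda_1}$, or alternatively a $2\times2$ matrix bound on the pair $(\|\cdot\|_{W^{1,1}},\|\cdot\|_{L^1})$ over one $M$-block. In either form the goal is to show that the block matrix has spectral radius $\theta<1$; the factors $(1-\lambda_1)^2$ and $\frac1{1-\lambda_1}+B$ in \eqref{eq:delta_star_ML3_revised} suggest this is the intended computation.

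\textbf{Step 4: conclusion.} Since mass preservation keeps $V_s$ invariant, the block contraction iterates: for $n=qN+r$, where $N\in\{M,2M\}$ is the block length and $0\le r<N$, Step 1 controls the remainder, giving
\[
\|L^{(j,j+n-1)}g\|_{W^{1,1}}\le\Bigl(1+\frac{B}{1-\lambda_1}\Bigr)\theta^{q}\|g\|_{W^{1,1}}
\]
(up to the constant relating $\|\cdot\|_\ast$ and $\|\cdot\|_{W^{1,1}}$). This yields \eqref{eq:ELoM_conclusion_revised} with $\lambda=-\frac{\log\theta}{N}$ and $C_{\mathrm{ELoM}}=(1+\frac{B}{1-\lambda_1})\theta^{-1}$, adjusted by that same norm-equivalence constant.
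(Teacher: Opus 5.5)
Your Steps 1, 2 and 4 follow the same block argument as the paper. Its proof checks $(ML1)$--$(ML3)$ and applies Lemma~\ref{losmem}: a uniform Lasota--Yorke inequality, a weak estimate on an $M$-block by comparison with $L_0^M$, and a strong contraction over $2M$ steps. Your telescoping bound reproduces the perturbative term $\frac{7(1-\lambda_1)}{10B}\|v\|_{W^{1,1}}$.

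\textbf{The gap: you never obtain $\theta<1$.} With the constant $\frac{1}{10B}$ of \eqref{eq:M_choice_ML2}, Step 3 yields
\[
\|L^{(j,j+2M-1)}v\|_{W^{1,1}}\le\Bigl(\frac{1}{10}+\frac{1}{10(1-\lambda_1)}+\frac{7}{10}\Bigr)\|v\|_{W^{1,1}},
\]
which is not a contraction once $\lambda_1\ge\frac12$, as you suspected. Your fallback cannot repair this. Write $a=\lambda_1^M$, $b=\frac{B}{1-\lambda_1}$, and let $c_w$ be your Step 2 constant. The block inequalities are then $\|Pv\|_{W^{1,1}}\le a\|v\|_{W^{1,1}}+b\|v\|_{L^1}$ and $\|Pv\|_{L^1}\le c_w\|v\|_{W^{1,1}}$. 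Passing to $\|\cdot\|_{W^{1,1}}+A\|\cdot\|_{L^1}$ is a diagonal similarity of $\begin{pmatrix}a&b\\ c_w&0\end{pmatrix}$. So the best contraction rate any such weighting can extract is its spectral radius, which is $<1$ if and only if $a+bc_w<1$. Since $bc_w\ge\frac{1}{10(1-\lambda_1)}$, this is the same obstruction. Longer blocks do not help either: with $\delta_*$ tied to $M$ by \eqref{eq:delta_star_ML3_revised}, the comparison error with $L_0^{kM}$ grows linearly in $k$.

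\textbf{The missing ingredient is a stronger $M$-step decay of $L_0$.} Lemma~\ref{losmem} assumes $(ML2)$ of Appendix~\ref{appendixa} in the form $\|L_0^Mv\|_{L^1}\le\frac{1-\lambda_1}{10B}\|v\|_{W^{1,1}}$ for $v\in V_s$. The paper's proof says this ``immediately follows'' from \eqref{eq:M_choice_ML2}, which passes over exactly the factor $1-\lambda_1$ you noticed. The repair lies in the choice of $M$, not of the norm. Since $L_0$ has a spectral gap on $W^{1,1}$, $\|L_0^M\|_{V_s\to L^1}\to0$. So you can take $M$ satisfying the stronger bound, and then choose $\delta_*$ via \eqref{eq:delta_star_ML3_revised} for that $M$. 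With this, your Step 2 gives $\|L^{(j,j+M-1)}v\|_{L^1}\le\frac{8(1-\lambda_1)}{10B}\|v\|_{W^{1,1}}$, and Step 3 gives $\frac{1}{10}+\frac{8}{10}=\frac{9}{10}$. Step 4 then concludes with block length $2M$ and no weighted norm.

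\textbf{A smaller point in Step 1.} You cannot shrink $\delta_*$, since it is given. The $\lambda_1,B$ entering \eqref{eq:M_choice_ML2} and \eqref{eq:delta_star_ML3_revised} must be Lasota--Yorke constants valid uniformly on $\mathcal S_{T_0}(\delta_*)$ from the start. The choices should be made in order: first $\lambda_1,B$ for an a priori $C^2$-neighbourhood of $T_0$, then $M$, then $\delta_*$.
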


\begin{proof}
The proof will follow by applying Lemma \ref{losmem} combined with explicit estimates of the constants involved in the mixed norm distance between transfer operators associated with nearby maps provided by Proposition \ref{prop:mixed_continuity_covering_self}.
We now verify that the assumptions (ML1),...,(ML3) hold under the above assumptions.

For each $T\in\mathcal S_{T_0}(\delta_\ast)$, denote by $L_T$ its Perron--Frobenius operator, considered as usual with
\[
B_s=W^{1,1}(\mathbb S^1),\qquad B_w=L^1(\mathbb S^1),\qquad V_s=\Bigl\{f\in W^{1,1}:\int f\,dm=0\Bigr\}.
\]

The following holds:

\noindent\textbf{ (ML1): Uniform one-step Lasota--Yorke constants on $\mathcal S_{T_0}(\delta_\ast)$.}
Every $T\in\mathcal S_{T_0}(\delta_\ast)$ satisfies $\inf|T'|\ge \lambda_0-\delta_\ast>1$ and $\|T''\|_\infty\le M_2+\delta_\ast$.
Consequently, the one-step Lasota--Yorke inequality holds with the \emph{same} constants
\begin{equation}\label{eq:LY_uniform_constants_explicit}
\lambda_1:=\frac{1}{\lambda_0-\delta_\ast}\in(0,1),
\qquad
B:=\frac{M_2+\delta_\ast}{(\lambda_0-\delta_\ast)^2},
\end{equation}
namely for all $f\in W^{1,1}$ and all $T\in\mathcal S_{T_0}(\delta_\ast)$,
\begin{equation}\label{eq:ML1_uniform_explicit}
\|L_T f\|_{L^1}\le \|f\|_{L^1},\qquad
\|L_T f\|_{W^{1,1}}\le \lambda_1\|f\|_{W^{1,1}}+B\|f\|_{L^1}.
\end{equation}
In particular, $(ML1)$ holds uniformly for any sequence of operators associated to maps in $\mathcal S_{T_0}(\delta_\ast)$.

\smallskip
\noindent\textbf{Choice of $M$ and (ML2).}
By the choice of $M$ made at beginning of the subsection it immediately follows that (ML2) is verified for $L_0$.

\smallskip\noindent
\textbf{Nearby operators and (ML3).}
For each $i$, since $T_i\in\mathcal S_{T_0}(\delta_\ast)$, Proposition~\ref{prop:mixed_continuity_covering_self} gives
$\|L_i-L_0\|_{W^{1,1}\to L^1}\le C(T_0)\delta_\ast$. Condition \eqref{eq:delta_star_ML3_revised} is exactly the numerical bound needed to satisfy $(ML3)$.
Since the assumptions are verified, the application of Lemma \ref{losmem} then directly lead to the statement.
\end{proof}

\subsection{Perturbations of the sequential composition and verification of the Assumptions \ref{ass:mu_bound}, \ref{ass:strong_derivative}. }
\label{sec:verify_expanding}
In this section we define the kinds of perturbations we mean to apply to a sequential composition of expanding maps and verify that Assumptions \ref{ass:mu_bound}, \ref{ass:strong_derivative}  hold.
The kind of perturbations we consider are given by the application of a further diffeomorphism after the application of the dynamics, adding in some sense a further "small kick" to the result of the unperturbed dynamics.
This will lead to the definition of a certain kind of family of sequential maps  $T_n^\varepsilon$ parametrized by  $\varepsilon \in [0,\varepsilon_0 )$. 

Throughout this section, we continue working with the general setting and notations stated at beginning of Section \ref{5.1}.
We suppose $T_0$, $\delta_\ast$ and the set $\mathcal S_{T_0}(\delta_\ast)$ being fixed.

\subsubsection{Post-composition perturbations.}\label{5-2-1}
Let $(T_n)_{n\in\mathbb Z}$ be a sequence of $C^4$ expanding covering maps in $\mathcal S_{T_0}(\delta_\ast)$ such that $||T_n||_{C^4}$ is uniformly bounded.
Let $X\in C^3(\mathbb S^1)$ be a smooth vector field on the circle, and let $(h_\varepsilon)_{\varepsilon \in [0, \varepsilon_0)}$ be a $C^3$ family of $C^3$ diffeomorphisms of $\mathbb S^1$ with $h_0=\mathrm{id}$ and
\begin{equation}\label{eq:kick_family}
h_\varepsilon(x)=x+\varepsilon X(x)+r^\varepsilon(x),
\qquad  
\frac{\|r^\varepsilon\|_{C^3}}{|\varepsilon|}\xrightarrow[\varepsilon\to0]{}0.
\end{equation}
Define the perturbed sequential maps by \emph{post-composition}
\begin{equation}\label{eq:T_kick}
T_n^\varepsilon := h_\varepsilon\circ T_n.
\end{equation}
Let $L_n$ and $L_n^\varepsilon$ be the Perron--Frobenius operators associated to $T_n$ and $T_n^\varepsilon$, respectively.  $L_n^\varepsilon = L_{h_\varepsilon}\, L_n$ where $L_{h_\varepsilon}$ is the operator associated to $h_\varepsilon$.

\subsubsection{Uniform bounds for the equivariant family (Assumption~\ref{ass:mu_bound})}\label{5.2.2}
Under the following assumption on the maps $T_n^\varepsilon$  we can establish Assumption~\ref{ass:mu_bound} for the associated system of transfer operators.

The following global regularity property  for the maps $T_n^\varepsilon$ directly follows by the way the perturbations are constructed. 
\begin{lemma}\label{N}
Under the above assumptions for the maps $T_n^\varepsilon$, there exists  $N>0$ such that $\forall n\in \mathbb Z,\varepsilon\in [0,\overline  \epsilon )$ one has
$||T_n^\varepsilon||_{C^3}\leq N.$
\end{lemma}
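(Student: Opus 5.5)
The plan is to estimate $\|h_\varepsilon\circ T_n\|_{C^3}$ directly by the chain rule (Faà di Bruno up to order three). Two ingredients will be bounded separately, uniformly in $n$ and in $\varepsilon\in[0,\overline\varepsilon)$: the maps $T_n$ and the diffeomorphisms $h_\varepsilon$. Here $\overline\varepsilon\le\varepsilon_0$ is chosen small, as specified below.

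\emph{Bounds on the two ingredients.} The $T_n$ are uniformly bounded in $C^4$ by hypothesis, hence in $C^3$; let $N_T:=\sup_n\|T_n\|_{C^3}<\infty$. For $h_\varepsilon$ I write $h_\varepsilon=\mathrm{id}+\varepsilon X+r^\varepsilon$. By \eqref{eq:kick_family} there is $\overline\varepsilon$ such that $\|r^\varepsilon\|_{C^3}\le|\varepsilon|$ for $\varepsilon<\overline\varepsilon$. Hence
\[
\|h_\varepsilon-\mathrm{id}\|_{C^3}\le\overline\varepsilon\,(\|X\|_{C^3}+1)=:D,
\]
and the derivatives satisfy $\|h_\varepsilon^{(k)}\|_\infty\le 1+D$ for $k=1,2,3$. (Derivatives of maps of $\mathbb S^1$ are read through lifts, so $h_\varepsilon'=1+\varepsilon X'+(r^\varepsilon)'$, and similarly for $T_n$. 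The $C^0$ part of the norm is bounded trivially on the compact circle, or via lifts over a fundamental domain.)

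\emph{Chain rule.} For $F=h_\varepsilon\circ T_n$ we have
\[
F'=h_\varepsilon'(T_n)\,T_n',\qquad
F''=h_\varepsilon''(T_n)(T_n')^2+h_\varepsilon'(T_n)\,T_n'',
\]
\[
F'''=h_\varepsilon'''(T_n)(T_n')^3+3h_\varepsilon''(T_n)\,T_n'T_n''+h_\varepsilon'(T_n)\,T_n'''.
\]
Each term is a product of a sup-norm bound on a derivative of $h_\varepsilon$, which is at most $1+D$, with polynomial expressions in $\|T_n\|_{C^3}\le N_T$. This gives
\[
\|T_n^\varepsilon\|_{C^3}\le c\,(1+D)(1+N_T)^3=:N,
\]
with $c$ an absolute constant, independent of $n$ and $\varepsilon$.

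\emph{Main obstacle.} The only delicate point is that \eqref{eq:kick_family} controls $r^\varepsilon$ only asymptotically. One must therefore restrict to $\varepsilon<\overline\varepsilon$ to get a uniform bound. Alternatively, one can use that $\varepsilon\mapsto h_\varepsilon$ is a $C^3$ family on a compact parameter interval, which gives $\sup_\varepsilon\|h_\varepsilon\|_{C^3}<\infty$ directly. I would use the small-$\overline\varepsilon$ choice, which is exactly why the lemma is stated with $\overline\varepsilon$. The rest is routine bookkeeping.
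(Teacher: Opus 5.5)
Your proof is correct and is the routine argument the paper has in mind. The paper gives no proof; it only says the bound follows from the construction $T_n^\varepsilon=h_\varepsilon\circ T_n$, and your chain-rule computation, combining $\sup_n\|T_n\|_{C^3}<\infty$ with $\|h_\varepsilon-\mathrm{id}\|_{C^3}\le\overline\varepsilon(\|X\|_{C^3}+1)$ for $\varepsilon<\overline\varepsilon$, supplies exactly the omitted details. One small point: in your aside, $[0,\varepsilon_0)$ is not compact, so that alternative route would need a closed subinterval $[0,\overline\varepsilon]\subset[0,\varepsilon_0)$, but your main argument does not depend on it.
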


This allows to prove the following uniform bound on the regularity of the equivariant measures $\mu^\varepsilon_n$.
\begin{lemma}
Let  $T_0$ be a uniformly expanding map on $S^1$, let  $\delta_\ast$ and  $\mathcal S_{T_0}(\delta_\ast)$ be as in Section \ref{5.1}.
Let $T_n^\varepsilon \in S_{T_0}(\delta_\ast)$ with  $\varepsilon\in [0,\overline  \epsilon )$  be a family of expanding maps coming from the deterministic perturbation of a family of maps belonging to $S_{T_0}(\delta_\ast)$  as described in in Section \ref{5-2-1}.

Then each element of $T_n^\varepsilon$ has an equivariant family
$\boldsymbol\mu^\varepsilon=(\mu_n^\varepsilon)_{n\in\mathbb Z}\in  W^{2,1}$ such that
\[
\mu_{n+1}^\varepsilon=L_n^\varepsilon\mu_n^\varepsilon,\qquad \mu_n^\varepsilon\ge 0,\qquad \int \mu_n^\varepsilon dm = 1,
\]
and
\[
\sup_{0\leq \varepsilon\le\varepsilon_0}\sup_{n\in\mathbb Z}\|\mu_n^\varepsilon\|_{W^{2,1}} < \infty.
\]
Furthermore 
\[
\sup_{n\in\mathbb Z}\|\mu^0_n\|_{W^{3,1}} < \infty.
\]
In particular, Assumption~\ref{ass:mu_bound} holds for this family.
\end{lemma}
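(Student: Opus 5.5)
The equivariant family will be built by pullback limits, and uniform higher-regularity bounds will be propagated through them.

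\emph{Step 1: existence.} Fix $\varepsilon$ and $n$, take the Lebesgue density $\mathbf 1$ as a starting point, and set $\mu_n^{\varepsilon,k}:=L_{n-1}^\varepsilon\cdots L_{n-k}^\varepsilon\mathbf 1$. First we check that $T_n^\varepsilon=h_\varepsilon\circ T_n$ stays in $\mathcal S_{T_0}(\delta_\ast)$ for $\varepsilon$ small. Then Lemma~\ref{lem:ELoM_near_T0} applies uniformly in $\varepsilon$. Since $\mu_n^{\varepsilon,k+1}-\mu_n^{\varepsilon,k}=L^{(n-k,n-1)}(L^\varepsilon_{n-k-1}\mathbf 1-\mathbf 1)$ with $L^\varepsilon_{n-k-1}\mathbf 1-\mathbf 1\in V_s$, we get
\[
\|\mu_n^{\varepsilon,k+1}-\mu_n^{\varepsilon,k}\|_{W^{1,1}}\le C_{\mathrm{ELoM}}e^{-\lambda k}\|L^\varepsilon_{n-k-1}\mathbf 1-\mathbf 1\|_{W^{1,1}}.
\]
The right-hand side is uniformly bounded by (ML1). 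Hence the sequence is Cauchy in $W^{1,1}$. The limit $\mu^\varepsilon_n$ is nonnegative with integral one, since positivity and mass are preserved. It satisfies $\mu^\varepsilon_{n+1}=L^\varepsilon_n\mu^\varepsilon_n$ by continuity of $L_n^\varepsilon$ on $W^{1,1}$.

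\emph{Step 2: uniform $W^{2,1}$ and $W^{3,1}$ bounds.} Here we use higher-order Lasota--Yorke inequalities for the whole class of maps. Lemma~\ref{N} gives $\|T_n^\varepsilon\|_{C^3}\le N$ and $\inf|(T_n^\varepsilon)'|\ge\lambda_0-\delta_\ast>1$. Differentiating the transfer operator formula $L f=\sum_{y\in T^{-1}x} f(y)/|T'(y)|$ twice, the standard computation (as in Appendix~\ref{appendixa}) gives
\[
\|L_n^\varepsilon f\|_{W^{2,1}}\le \lambda_1^{2}\|f\|_{W^{2,1}}+B_2\|f\|_{W^{1,1}},
\]
with $B_2$ depending only on $N$, $\lambda_0-\delta_\ast$ and the degree. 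The same holds at order three for $\varepsilon=0$, where $\|T_n\|_{C^4}$ is uniformly bounded. The $C^4$ bound is needed there because $f'''$ involves $(1/T')'''$, which brings in $T''''$.

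Iterating along the sequence, we use the uniform $W^{1,1}$ bound on $\mu^{\varepsilon,k}_n$ from Step 1 (which is at most $C_{\mathrm{ELoM}}$ times a constant plus $1$). This gives
\[
\sup_{k,n,\varepsilon}\|\mu_n^{\varepsilon,k}\|_{W^{2,1}}\le \|\mathbf 1\|_{W^{2,1}}+\frac{B_2\sup\|\mu^{\varepsilon,k}\|_{W^{1,1}}}{1-\lambda_1^2}.
\]
The same argument, one order higher, works at $\varepsilon=0$ in $W^{3,1}$.

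\emph{Step 3: passing to the limit.} The bounds must pass to the limit $\mu_n^\varepsilon$. We use lower semicontinuity of the $W^{2,1}$ norm (respectively $W^{3,1}$) under $L^1$ convergence. Concretely, uniformly bounded $W^{2,1}$ functions converging in $W^{1,1}$ have a limit whose first derivative has bounded variation, with total variation at most $\liminf$ of the $W^{2,1}$ norms. The step needing care is that this gives $BV$ of the derivative, i.e.\ $W^{2,1}$ only up to a possible singular part. To avoid that issue, we apply the $W^{k,1}$ Lasota--Yorke argument once more to $\mu_n^\varepsilon=L^{(n-k,n-1)}\mu^\varepsilon_{n-k}$, or equivalently work throughout with the spaces of functions whose $(k-1)$-th derivative is $BV$. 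Combined with the extra smoothing of one application of $L$, this should recover genuine $W^{2,1}$ and $W^{3,1}$ membership with the same bounds.

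\emph{Main obstacle.} The main difficulty is the bookkeeping in the higher-order Lasota--Yorke constants. We need them uniform over $n$ and $\varepsilon$, which is where the $C^3$ bound of Lemma~\ref{N} and the $C^4$ bound on $T_n$ enter. Given these, Assumption~\ref{ass:mu_bound} follows because $\|\cdot\|_{W^{1,1}}\le\|\cdot\|_{W^{2,1}}$.
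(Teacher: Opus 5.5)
Your Steps 1 and 2 follow the paper's proof. You take pullback iterates of a fixed smooth density, show they are Cauchy in $W^{1,1}$ by Lemma~\ref{lem:ELoM_near_T0}, and bound them uniformly in $W^{2,1}$ (and in $W^{3,1}$ at $\varepsilon=0$) by higher-order Lasota--Yorke inequalities. These use the $C^3$ bound of Lemma~\ref{N} and the $C^4$ bound on $T_n$ respectively. The paper then simply asserts that these bounds pass to the limit. You are right that this needs an argument: a $W^{2,1}$-bounded sequence converging in $W^{1,1}$ only gives a limit whose first derivative is $BV$. However, the repair you propose in Step 3 does not work. The transfer operator of a smooth expanding map has no smoothing effect: a jump of $f'$ at $y$ produces a jump of $(Lf)'$ at $T(y)$, only rescaled by $T'(y)^{-2}$. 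Running the argument in spaces whose top derivative is $BV$ proves membership in that larger space, not in $W^{2,1}$ or $W^{3,1}$. As written, Step 3 is therefore a genuine, if small, gap.

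The simplest fix is to upgrade the loss of memory to the stronger norm, so that the pullback sequence is Cauchy directly in $W^{2,1}$. For $v\in V_s\cap W^{2,1}$ and $v_i:=L^{(j,j+i-1)}v$, your second-order inequality together with Lemma~\ref{lem:ELoM_near_T0} gives
\[
\|v_{i+1}\|_{W^{2,1}}\le\lambda_1^2\|v_i\|_{W^{2,1}}+B_2C_{\mathrm{ELoM}}e^{-\lambda i}\|v\|_{W^{1,1}}.
\]
Hence $\|v_n\|_{W^{2,1}}\le C'n\theta^n\|v\|_{W^{2,1}}$ with $\theta=\max(\lambda_1^2,e^{-\lambda})<1$. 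Apply this to $v=L^\varepsilon_{n-k-1}\mathbf 1-\mathbf 1$, whose $W^{2,1}$ norm is uniformly bounded. Then $\mu_n^{\varepsilon,k}$ converges in $W^{2,1}$, so the limit lies in $W^{2,1}$ with the uniform bound of Step 2. The same argument one order higher gives the $W^{3,1}$ claim at $\varepsilon=0$, using the $W^{2,1}$ decay just proved and the third-order inequality for the $C^4$ maps $T_n$.

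Alternatively, you can keep your lower-semicontinuity step and look at the singular part of the second distributional derivative. In the Lasota--Yorke computation, the singular part for $Lf$ is the image of the singular part for $f$ under the inverse branches, weighted by $1/(T')^2$ at the preimage. Its total variation is therefore at most $\lambda_1^2$ times the original, and there is no additive term because all lower-order terms are absolutely continuous. Writing $\mu_n^\varepsilon=L^{(n-k,n-1)}\mu^\varepsilon_{n-k}$ and letting $k\to\infty$ then kills the singular part. This is the correct version of your remark about applying the argument once more to $L^{(n-k,n-1)}\mu^\varepsilon_{n-k}$. The mechanism is contraction of the singular part along the backward orbit, not smoothing by a single application of $L$.
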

 \begin{proof}
 By Lemma \ref{N} the maps $T_n^\varepsilon$ have uniform $C^3$ regularity, furthermore, if $\varepsilon_0$ is small enough they are also uniformly expanding.  It is well known (see e.g. \cite{FG2025}, Lemma~18) that such a set of maps   satisfy uniformly Lasota Yorke  inequalities on $W^{i,1}$,
in particular for $i=1,2$:
\[
\|L_n^\varepsilon f\|_{W^{1,1}}\le a\|f\|_{W^{1,1}}+b\|f\|_{L^1},\qquad
\|L_n^\varepsilon f\|_{W^{2,1}}\le a_2\|f\|_{W^{2,1}}+b_2\|f\|_{W^{1,1}},
\]
with constants independent of $n$ and $\varepsilon$.
Fix a reference density $f_\ast\in W^{3,1}$ with $\int f_\ast\,dm=1$, and define pullback iterates
$\mu_{n}^{\varepsilon,(k)}:=L_{n-k,n-1}^\varepsilon f_\ast$.
The Lasota-Yorke bounds give uniform $W^{2,1}$ bounds for $\mu_{n}^{\varepsilon,(k)}$ and uniform $W^{3,1}$ bounds for $\mu_{n}^{0,(k)}$ since the associated transfer operators are related respectively to the $C^3$ expanding maps $T^\varepsilon_n$ and to the $C^4$ expanding maps $T^0_n$.
Loss of memory on the zero-mean subspace proved in Lemma \ref{lem:ELoM_near_T0} implies $\mu_{n}^{\varepsilon,(k)}$ is Cauchy in $W^{1,1}$ as $k\to\infty$,
hence converges to some $\mu_n^\varepsilon \in W^{1,1}$ satisfying equivariance.
 Proposition
\ref{prop:unique_fixedpoint} ensures the system has a unique equivariant family $(\mu_n^\varepsilon)_{n\in\mathbb Z}\in  W^{1,1}$. The uniform bounds on the $W^{2,1}$ norm of  $\mu_n^\varepsilon$ and on the $W^{3,1}$ norm of  $\mu_n^0$  again follows from the uniform Lasota Yorke inequalities.
\end{proof}

We remark that the regularity established above for the family $\mu_{n}^{\varepsilon}$ is more than necessary to verify  Assumption~\ref{ass:mu_bound}, taking $W^{1,1}$ as a strong space. However this will be useful to verfy Assumption \ref{ass:strong_derivative}, as we will see in the next section.

\subsubsection{Strong differentiability (Assumption~\ref{ass:strong_derivative})}
\label{subsubsec:strongdiff_kick}

In this subsection we verify Assumption~\ref{ass:strong_derivative} for the above family of perturbed sequential  transfer operators $L_n^\varepsilon$.

\paragraph{Derivative of the diffeomorphism transfer operator.}
For a $C^2$ diffeomorphism $h$ of $\mathbb S^1$, the Perron--Frobenius operator is
\[
(L_h u)(x)=\frac{u(h^{-1}(x))}{h'(h^{-1}(x))}.
\]
The following is a "uniform on $W^{3,1}$" version of a well known result, establishing a formula for the "derivative" at $0$ of the family of operators $L_{h_\varepsilon}$. We include a simple proof for completeness.

\begin{lemma}[Derivative operator]\label{lem:Dh}
Assume \eqref{eq:kick_family}. Define the first-order operator
\footnote{Here \(X\) is a \(C^3\) vector field on \(\mathbb S^1\), identified
in the standard coordinate with a \(1\)-periodic scalar function. Thus \(Xu\)
denotes pointwise multiplication.}
\begin{equation}\label{eq:D_operator}
Du:=-(Xu)',
\qquad
u\in W^{2,1}(\mathbb S^1).
\end{equation}
Then:
\begin{enumerate}
\item[{(i)}]
The operator
\begin{equation*}
D:W^{2,1}(\mathbb S^1)\longrightarrow W^{1,1}(\mathbb S^1)
\end{equation*}
is bounded. More precisely,
\begin{equation*}
\|Du\|_{W^{1,1}}
\leq
C_D\|u\|_{W^{2,1}},
\qquad
C_D:=3\|X\|_{C^2}.
\end{equation*}

\item[{(ii)}]
For every \(u\in W^{2,1}(\mathbb S^1)\),
\begin{equation}\label{eq:Dh_diffquot}
\lim_{\varepsilon\to0}
\left\|
\frac{L_{h_\varepsilon}u-u}{\varepsilon}-Du
\right\|_{W^{1,1}}
=0.
\end{equation}

Moreover, the convergence is uniform for \(u\) in bounded subsets of
\(W^{3,1}(\mathbb S^1)\). More precisely,
\begin{equation}\label{eq:Dh_uniform_convergence}
\sup_{\substack{u\in W^{3,1}(\mathbb S^1)\\
\|u\|_{W^{3,1}}\leq 1}}
\left\|
\frac{L_{h_\varepsilon}u-u}{\varepsilon}-Du
\right\|_{W^{1,1}}
\longrightarrow 0
\qquad
\text{as }\varepsilon\to0.
\end{equation}

\end{enumerate}
\end{lemma}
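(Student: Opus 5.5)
Part (i) is a direct computation. For $u\in W^{2,1}$ we have $Du=-X'u-Xu'$ and $(Du)'=-X''u-2X'u'-Xu''$. Hence
\[
\|Du\|_{L^1}+\|(Du)'\|_{L^1}\le \|X\|_{C^2}\bigl(2\|u\|_{L^1}+3\|u'\|_{L^1}+\|u''\|_{L^1}\bigr),
\]
and summing the coefficients appropriately gives $\|Du\|_{W^{1,1}}\le 3\|X\|_{C^2}\|u\|_{W^{2,1}}$ with the $W^{2,1}$ norm $\|u\|_{L^1}+\|u'\|_{L^1}+\|u''\|_{L^1}$. Only bookkeeping of constants is involved.

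For (ii), I would avoid working directly with $h_\varepsilon^{-1}$ and use duality through the weak formulation. For $u$ smooth and a test function $\phi$,
\[
\int L_{h_\varepsilon}u\,\phi=\int u\,\phi\circ h_\varepsilon .
\]
Writing $\phi\circ h_\varepsilon-\phi=\int_0^1\phi'(x+t(h_\varepsilon(x)-x))\,dt\,(h_\varepsilon(x)-x)$ shows the structure, but to get $W^{1,1}$ control I would prefer the explicit pointwise route. Set $g_\varepsilon:=h_\varepsilon^{-1}$. By the inverse function theorem applied to the $C^3$ family with $h_0=\mathrm{id}$, we get $g_\varepsilon(x)=x-\varepsilon X(x)+\tilde r^\varepsilon(x)$ with $\|\tilde r^\varepsilon\|_{C^2}=o(\varepsilon)$. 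This uses \eqref{eq:kick_family}: $C^3$ smallness of $r^\varepsilon$ gives $C^2$ (in fact $C^3$) smallness of the inverse remainder. Then $L_{h_\varepsilon}u=(u\circ g_\varepsilon)\,g_\varepsilon'$, since $1/h_\varepsilon'(h_\varepsilon^{-1})=g_\varepsilon'$. Also $g_\varepsilon'=1-\varepsilon X'+o_{C^1}(\varepsilon)$.

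The core step is a Taylor expansion with integral remainder in the variable $\varepsilon$:
\[
u(g_\varepsilon(x))=u(x)+(g_\varepsilon(x)-x)u'(x)+(g_\varepsilon(x)-x)^2\int_0^1(1-t)\,u''\bigl(x+t(g_\varepsilon(x)-x)\bigr)\,dt .
\]
Multiplying by $g_\varepsilon'$ and subtracting $u-\varepsilon(Xu)'=u-\varepsilon X'u-\varepsilon Xu'$ leaves three kinds of terms:
\begin{enumerate}
\item[(a)] terms $\tilde r^\varepsilon u'$ and $o(\varepsilon)u$, and their derivatives;
\item[(b)] cross terms of order $\varepsilon^2$, such as $\varepsilon^2XX'u'$;
\item[(c)] the quadratic remainder, of size $O(\varepsilon^2)$ times a composition of $u''$ with the near-identity maps $x\mapsto x+t(g_\varepsilon(x)-x)$.
\end{enumerate}
Each term must be estimated in $W^{1,1}$, so one derivative is taken. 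Differentiating the remainder (c) produces $u'''$ composed with near-identity diffeomorphisms. Their $L^1$ norms are controlled by $\|u'''\|_{L^1}$ through change of variables, since the Jacobians are uniformly close to $1$. All terms are therefore $o(\varepsilon)\|u\|_{W^{3,1}}$, uniformly on the unit ball of $W^{3,1}$, which proves \eqref{eq:Dh_uniform_convergence} and hence \eqref{eq:Dh_diffquot} for $u\in W^{3,1}$.

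For general $u\in W^{2,1}$ I would use a density argument. The operators $\varepsilon^{-1}(L_{h_\varepsilon}-I)$ are uniformly bounded $W^{2,1}\to W^{1,1}$: the same expansion stopped at first order with integral remainder, $u(g_\varepsilon)-u=(g_\varepsilon-\mathrm{id})\int_0^1u'(\cdot+t(g_\varepsilon-\mathrm{id}))\,dt$, gives a bound $O(\varepsilon)\|u\|_{W^{2,1}}$ in $W^{1,1}$. Together with (i) and density of $W^{3,1}$ in $W^{2,1}$, an $\epsilon/3$ argument then yields \eqref{eq:Dh_diffquot}.

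I expect the main obstacle to be the uniform $W^{1,1}$ control of the remainder terms (c) and (a). Two issues arise there. First, when the second-order Taylor remainder is differentiated, it involves $u'''$ composed with a family of maps depending on $t$ and $\varepsilon$; the change-of-variables bound must be made uniform in $t\in[0,1]$. Second, the inverse remainder $\tilde r^\varepsilon$ must be shown to be $o(\varepsilon)$ in $C^2$, and this is where the $C^3$ hypothesis in \eqref{eq:kick_family} is used.
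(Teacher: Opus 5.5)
Your proposal is correct and follows essentially the same route as the paper: expand $g_\varepsilon=h_\varepsilon^{-1}=\mathrm{id}-\varepsilon X+o_{C^2}(\varepsilon)$, write $L_{h_\varepsilon}u=(u\circ g_\varepsilon)\,g_\varepsilon'$, Taylor-expand with integral remainder, and control compositions with the near-identity maps $x\mapsto x+t(g_\varepsilon(x)-x)$ by change of variables, with $u'''$ entering exactly where uniformity on the $W^{3,1}$ ball is needed. The differences are minor: you use a second-order Taylor formula where the paper writes the remainder as $\delta_\varepsilon\int_0^1(u''\circ\Phi_{\varepsilon,t}-u'')\,dt$ and applies its composition estimate, and for fixed $u\in W^{2,1}$ you argue by density plus a uniform $W^{2,1}\to W^{1,1}$ bound on the difference quotients, whereas the paper directly uses the $L^1$-continuity $u''\circ\Phi_{\varepsilon,t}\to u''$ (itself a density fact).
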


\begin{proof}
We first prove \(\mathrm{(i)}\). Since
\begin{equation*}
Du=-(Xu)'=-X'u-Xu',
\end{equation*}
we have
\begin{equation*}
\|Du\|_{L^1}
\leq
\|X'\|_\infty\|u\|_{L^1}
+
\|X\|_\infty\|u'\|_{L^1}.
\end{equation*}
Moreover,
\begin{equation*}
(Du)'
=
-\bigl((Xu)'\bigr)'
=
-(Xu)''
=
-\bigl(X''u+2X'u'+Xu''\bigr),
\end{equation*}
and therefore
\begin{equation*}
\|(Du)'\|_{L^1}
\leq
\|X''\|_\infty\|u\|_{L^1}
+
2\|X'\|_\infty\|u'\|_{L^1}
+
\|X\|_\infty\|u''\|_{L^1}.
\end{equation*}
It follows that
\begin{equation*}
\|Du\|_{W^{1,1}}
\leq
3\|X\|_{C^2}\|u\|_{W^{2,1}},
\end{equation*}
which proves \(\mathrm{(i)}\).

We now prove \(\mathrm{(ii)}\). Let
\begin{equation*}
g_\varepsilon:=h_\varepsilon^{-1}.
\end{equation*}
By \eqref{eq:kick_family} and the smooth dependence of inversion in a
neighbourhood of the identity, there exists
\(s_\varepsilon\in C^2(\mathbb S^1)\) such that
\begin{equation}\label{eq:inverse_expansion}
g_\varepsilon
=
\operatorname{id}-\varepsilon X+s_\varepsilon,
\qquad
\frac{\|s_\varepsilon\|_{C^2}}{|\varepsilon|}
\longrightarrow0.
\end{equation}
Set
\begin{equation*}
\delta_\varepsilon
:=
g_\varepsilon-\operatorname{id}
=
-\varepsilon X+s_\varepsilon.
\end{equation*}
Then
\begin{equation}\label{eq:delta_estimates}
\|\delta_\varepsilon\|_{C^2}
=
O(|\varepsilon|),
\qquad
\|s_\varepsilon\|_{C^2}
=
o(|\varepsilon|).
\end{equation}

The Perron--Frobenius operator associated with \(h_\varepsilon\) is
\begin{equation*}
(L_{h_\varepsilon}u)(x)
=
\frac{u(g_\varepsilon(x))}
     {h_\varepsilon'(g_\varepsilon(x))}
=
u(g_\varepsilon(x))g_\varepsilon'(x).
\end{equation*}
Since
\begin{equation*}
g_\varepsilon
=
\operatorname{id}+\delta_\varepsilon,
\qquad
g_\varepsilon'
=
1+\delta_\varepsilon',
\end{equation*}
we obtain
\begin{equation}\label{eq:transfer_inverse_form}
L_{h_\varepsilon}u
=
(u\circ g_\varepsilon)
\bigl(1+\delta_\varepsilon'\bigr).
\end{equation}

Define the remainder
\begin{equation*}
R_\varepsilon(u)
:=
L_{h_\varepsilon}u-u-\varepsilon Du
=
L_{h_\varepsilon}u-u+\varepsilon(Xu)'.
\end{equation*}
Using
\begin{equation*}
\delta_\varepsilon+\varepsilon X=s_\varepsilon,
\qquad
\delta_\varepsilon'+\varepsilon X'=s_\varepsilon',
\end{equation*}
we obtain the exact identity
\begin{align}
R_\varepsilon(u)
={}&
u\circ g_\varepsilon-u-\delta_\varepsilon u'
+
s_\varepsilon u'
\nonumber\\
&+
\delta_\varepsilon'
\bigl(u\circ g_\varepsilon-u\bigr)
+
s_\varepsilon'u.
\label{eq:remainder_zero_order}
\end{align}

For \(t\in[0,1]\), define
\begin{equation*}
\Phi_{\varepsilon,t}
:=
\operatorname{id}+t\delta_\varepsilon.
\end{equation*}
For sufficiently small \(\varepsilon\), these maps are \(C^1\)
diffeomorphisms, with Jacobians and inverse Jacobians bounded uniformly in
\(t\) and \(\varepsilon\).

We use the standard estimate
\begin{equation}\label{eq:composition_estimate}
\|v\circ\Phi_{\varepsilon,t}-v\|_{L^1}
\leq
C\|\delta_\varepsilon\|_\infty\|v'\|_{L^1},
\qquad
v\in W^{1,1}(\mathbb S^1),
\end{equation}
where \(C\) is independent of \(t\in[0,1]\) and of sufficiently small
\(\varepsilon\). Indeed,
\begin{equation*}
v(\Phi_{\varepsilon,t}(x))-v(x)
=
\int_0^t
v'\bigl(x+\tau\delta_\varepsilon(x)\bigr)
\delta_\varepsilon(x)\,d\tau,
\end{equation*}
and the estimate follows by integration and a change of variables.

The first term in \eqref{eq:remainder_zero_order} satisfies
\begin{equation}\label{eq:first_taylor_remainder}
u\circ g_\varepsilon-u-\delta_\varepsilon u'
=
\delta_\varepsilon
\int_0^1
\left(
u'\circ\Phi_{\varepsilon,t}-u'
\right)\,dt.
\end{equation}
Applying \eqref{eq:composition_estimate} with \(v=u'\), we obtain
\begin{equation*}
\|u\circ g_\varepsilon-u-\delta_\varepsilon u'\|_{L^1}
\leq
C\|\delta_\varepsilon\|_\infty^2\|u''\|_{L^1}.
\end{equation*}
Consequently,
\begin{equation}\label{eq:zero_order_remainder_bound}
\|R_\varepsilon(u)\|_{L^1}
\leq
C
\left(
\|s_\varepsilon\|_{C^1}
+
\|\delta_\varepsilon\|_{C^1}^2
\right)
\|u\|_{W^{2,1}}.
\end{equation}

We now estimate the derivative of the remainder. Differentiating
\eqref{eq:transfer_inverse_form}, we obtain
\begin{equation*}
(L_{h_\varepsilon}u)'
=
(u'\circ g_\varepsilon)(g_\varepsilon')^2
+
(u\circ g_\varepsilon)g_\varepsilon''.
\end{equation*}
Furthermore,
\begin{equation*}
\varepsilon X=-\delta_\varepsilon+s_\varepsilon,
\end{equation*}
and hence
\begin{equation*}
\varepsilon X'
=
-\delta_\varepsilon'+s_\varepsilon',
\qquad
\varepsilon X''
=
-\delta_\varepsilon''+s_\varepsilon''.
\end{equation*}
A direct rearrangement gives
\begin{align}
R_\varepsilon'(u)
={}&
u'\circ g_\varepsilon-u'
-\delta_\varepsilon u''
+
s_\varepsilon u''
\nonumber\\
&+
2\delta_\varepsilon'
\bigl(u'\circ g_\varepsilon-u'\bigr)
+
2s_\varepsilon'u'
\nonumber\\
&+
(\delta_\varepsilon')^2
(u'\circ g_\varepsilon)
\nonumber\\
&+
\delta_\varepsilon''
\bigl(u\circ g_\varepsilon-u\bigr)
+
s_\varepsilon''u.
\label{eq:remainder_derivative}
\end{align}

The first term in \eqref{eq:remainder_derivative} can be written as
\begin{equation}\label{eq:second_taylor_remainder}
u'\circ g_\varepsilon-u'
-\delta_\varepsilon u''
=
\delta_\varepsilon
\int_0^1
\left(
u''\circ\Phi_{\varepsilon,t}-u''
\right)\,dt.
\end{equation}

Let first \(u\in W^{2,1}(\mathbb S^1)\) be fixed. Since
\(u''\in L^1(\mathbb S^1)\) and
\(\Phi_{\varepsilon,t}\to\operatorname{id}\) in \(C^1\), uniformly for
\(t\in[0,1]\), continuity of composition in \(L^1\) gives
\begin{equation*}
\sup_{t\in[0,1]}
\|u''\circ\Phi_{\varepsilon,t}-u''\|_{L^1}
\longrightarrow0.
\end{equation*}
Since
\(\|\delta_\varepsilon\|_\infty=O(|\varepsilon|)\), it follows from
\eqref{eq:second_taylor_remainder} that
\begin{equation}\label{eq:fixed_u_main_remainder}
\|u'\circ g_\varepsilon-u'
-\delta_\varepsilon u''\|_{L^1}
=
o(|\varepsilon|).
\end{equation}

The remaining terms in \eqref{eq:remainder_derivative} are also
\(o(|\varepsilon|)\). Indeed, by \eqref{eq:composition_estimate},
\begin{equation*}
\|u'\circ g_\varepsilon-u'\|_{L^1}
\leq
C\|\delta_\varepsilon\|_\infty\|u''\|_{L^1},
\end{equation*}
and
\begin{equation*}
\|u\circ g_\varepsilon-u\|_{L^1}
\leq
C\|\delta_\varepsilon\|_\infty\|u'\|_{L^1}.
\end{equation*}
Combining these estimates with
\eqref{eq:delta_estimates},
\eqref{eq:zero_order_remainder_bound}, and
\eqref{eq:remainder_derivative}, we obtain
\begin{equation*}
\|R_\varepsilon(u)\|_{W^{1,1}}
=
o(|\varepsilon|)
\end{equation*}
for every fixed \(u\in W^{2,1}(\mathbb S^1)\). Therefore,
\begin{equation*}
\left\|
\frac{L_{h_\varepsilon}u-u}{\varepsilon}-Du
\right\|_{W^{1,1}}
=
\frac{\|R_\varepsilon(u)\|_{W^{1,1}}}{|\varepsilon|}
\longrightarrow0,
\end{equation*}
which proves \eqref{eq:Dh_diffquot}.

Finally, let \(u\in W^{3,1}(\mathbb S^1)\). Applying
\eqref{eq:composition_estimate} with \(v=u''\), we obtain
\begin{equation*}
\|u''\circ\Phi_{\varepsilon,t}-u''\|_{L^1}
\leq
C\|\delta_\varepsilon\|_\infty\|u'''\|_{L^1}.
\end{equation*}
Consequently, \eqref{eq:second_taylor_remainder} gives
\begin{equation*}
\|u'\circ g_\varepsilon-u'
-\delta_\varepsilon u''\|_{L^1}
\leq
C\|\delta_\varepsilon\|_\infty^2\|u'''\|_{L^1}.
\end{equation*}
Using this estimate in \eqref{eq:remainder_derivative}, together with
\eqref{eq:zero_order_remainder_bound}, gives
\begin{equation}\label{eq:uniform_remainder_estimate}
\|R_\varepsilon(u)\|_{W^{1,1}}
\leq
C
\left(
\|s_\varepsilon\|_{C^2}
+
\|\delta_\varepsilon\|_{C^2}^2
\right)
\|u\|_{W^{3,1}}.
\end{equation}
Therefore,
\begin{align*}
\left\|
\frac{L_{h_\varepsilon}u-u}{\varepsilon}-Du
\right\|_{W^{1,1}}
\leq{}&
C
\left(
\frac{\|s_\varepsilon\|_{C^2}}{|\varepsilon|}
+
\frac{\|\delta_\varepsilon\|_{C^2}^2}{|\varepsilon|}
\right)
\|u\|_{W^{3,1}}.
\end{align*}
By \eqref{eq:delta_estimates},
\begin{equation*}
\frac{\|s_\varepsilon\|_{C^2}}{|\varepsilon|}
\longrightarrow0,
\qquad
\frac{\|\delta_\varepsilon\|_{C^2}^2}{|\varepsilon|}
=
O(|\varepsilon|)
\longrightarrow0.
\end{equation*}
This proves \eqref{eq:Dh_uniform_convergence}, and hence the convergence is
uniform on bounded subsets of \(W^{3,1}(\mathbb S^1)\).
\end{proof}

\paragraph{Verification of Assumption~\ref{ass:strong_derivative}.}
Let $\boldsymbol\mu=(\mu_n)_{n\in\mathbb Z}$ be the unperturbed equivariant family for $(L_n)$.
Define the forcing sequence
\begin{equation}\label{eq:forcing_kick}
g_n  := D(L_n(\mu_n))\in W^{1,1}.
\end{equation}

\begin{lemma}\label{lem:strong_derivative_kick} Let $L_n^\varepsilon$ be the family of transfer operators associated to a  family of expanding maps $T_n^\varepsilon$ satisfying the assumptions listed in Section \ref{5-2-1}.
For such a family,  Assumption~\ref{ass:strong_derivative} holds with
$B_s=W^{1,1}$, $B_w=L^1$ and forcing $g_n$ given by \eqref{eq:forcing_kick}. More precisely:
\begin{enumerate}
\item[(a)] (\emph{Strong difference quotients along $\mu_n$})
\[
\lim_{\varepsilon\to0}\ \sup_{n\in\mathbb Z}
\left\|
\frac{L_n^\varepsilon \mu_n- L_n\mu_n}{\varepsilon} - g_n
\right\|_{W^{1,1}} = 0,
\qquad
\sup_{n\in\mathbb Z}\|g_n\|_{W^{1,1}}<\infty.
\]
\item[(b)] (\emph{mixed continuity})

\[
\lim_{\varepsilon\to0}\ \sup_{n\in\mathbb Z, ||v||_{W^{1,1}}=1}\|(L_n^\varepsilon-L_n)v\|_{L^1}=0.
\]
\end{enumerate}
\end{lemma}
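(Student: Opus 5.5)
Part (a) rests on the factorization $L_n^\varepsilon=L_{h_\varepsilon}L_n$ and on Lemma~\ref{lem:Dh}. Set $u_n:=L_n\mu_n=\mu_{n+1}$, which holds by equivariance of the unperturbed family. Then
\[
\frac{L_n^\varepsilon\mu_n-L_n\mu_n}{\varepsilon}-g_n=\frac{L_{h_\varepsilon}u_n-u_n}{\varepsilon}-Du_n .
\]
The uniform-in-$n$ convergence in $W^{1,1}$ then follows from the uniform statement \eqref{eq:Dh_uniform_convergence}. This needs $\sup_n\|u_n\|_{W^{3,1}}=\sup_n\|\mu_{n+1}\|_{W^{3,1}}<\infty$, which is exactly the $W^{3,1}$ bound on the unperturbed equivariant family from the preceding lemma (it uses the $C^4$ regularity of the $T_n$). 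By linearity we rescale $u_n$ to the unit ball of $W^{3,1}$ and multiply the supremum by this constant. The bound $\sup_n\|g_n\|_{W^{1,1}}\le C_D\sup_n\|\mu_{n+1}\|_{W^{2,1}}<\infty$ comes from part (i) of Lemma~\ref{lem:Dh}.

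Part (b) is a mixed-norm estimate, with one subtlety: Proposition~\ref{prop:mixed_continuity_covering_self} is phrased for maps compared with the fixed map $T_0$, whereas here we compare $T_n^\varepsilon=h_\varepsilon\circ T_n$ with $T_n$. I would avoid re-deriving that proposition and again use the factorization. For $v\in W^{1,1}$,
\[
(L_n^\varepsilon-L_n)v=(L_{h_\varepsilon}-I)w,\qquad w:=L_nv .
\]
By the uniform Lasota--Yorke inequality \eqref{eq:ML1_uniform_explicit}, $\|w\|_{W^{1,1}}\le(\lambda_1+B)\|v\|_{W^{1,1}}$ uniformly in $n$. It therefore suffices to show that $\|(L_{h_\varepsilon}-I)w\|_{L^1}\le C\|h_\varepsilon-\mathrm{id}\|_{C^1}\|w\|_{W^{1,1}}$. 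For this we write, with $g_\varepsilon=h_\varepsilon^{-1}=\mathrm{id}+\delta_\varepsilon$ as in the proof of Lemma~\ref{lem:Dh},
\[
L_{h_\varepsilon}w-w=(w\circ g_\varepsilon-w)(1+\delta_\varepsilon')+\delta_\varepsilon' w .
\]
The composition estimate \eqref{eq:composition_estimate} gives $\|w\circ g_\varepsilon-w\|_{L^1}\le C\|\delta_\varepsilon\|_\infty\|w'\|_{L^1}$, and the last term is bounded by $\|\delta_\varepsilon'\|_\infty\|w\|_{L^1}$. Since $\|\delta_\varepsilon\|_{C^1}=O(\varepsilon)$ by \eqref{eq:delta_estimates}, we get $\sup_n\|(L_n^\varepsilon-L_n)\|_{W^{1,1}\to L^1}=O(\varepsilon)\to0$. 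Alternatively, Proposition~\ref{prop:mixed_continuity_covering_self} could be applied with $T_n$ in place of $T_0$, since its constant depends only on uniform bounds ($\inf|T'|$, $\|T''\|_\infty$, degree) that are uniform over $\mathcal S_{T_0}(\delta_\ast)$. The factorization route is cleaner, though.

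The main obstacle is the uniformity in $n$ in (a). Pointwise convergence \eqref{eq:Dh_diffquot} on $W^{2,1}$ is not enough for a supremum over infinitely many $u_n$. This is why the argument has to pass through the $W^{3,1}$-uniform remainder bound \eqref{eq:uniform_remainder_estimate} and the extra regularity $\sup_n\|\mu^0_n\|_{W^{3,1}}<\infty$. I would check carefully that this bound indeed holds with constants independent of $n$, via the uniform $W^{3,1}$ Lasota--Yorke inequality for the $C^4$ maps $T_n$ with uniformly bounded $C^4$ norms. The remaining steps are routine.
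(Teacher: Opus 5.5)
Your proposal is correct. Part (a) is the paper's argument: factor $L_n^\varepsilon=L_{h_\varepsilon}L_n$, use $L_n\mu_n=\mu_{n+1}$, and apply Lemma~\ref{lem:Dh}. Your justification of uniformity in $n$ is the accurate one. The written proof appeals only to $\sup_n\|\mu_n\|_{W^{2,1}}<\infty$ and the pointwise limit \eqref{eq:Dh_diffquot}, which by itself does not control the supremum over the infinitely many $u_n=\mu_{n+1}$. What does control it is \eqref{eq:Dh_uniform_convergence} together with $\sup_n\|\mu^0_n\|_{W^{3,1}}<\infty$, exactly as you use them.

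Part (b) is where you genuinely differ. The paper simply cites Proposition~\ref{prop:mixed_continuity_covering_self}, but that result compares a map with the fixed $T_0$. Passing through $L_0$ by the triangle inequality would only give a bound of order $C(T_0)\delta_\ast$, which does not vanish. So the proposition must implicitly be re-applied with $T_n$ as the base map. Its constant is uniform in $n$ because it depends only on $\inf|T_n'|$, $\|T_n'\|_\infty$, $\|T_n''\|_\infty$ and the degree, and one must also check $\sup_n\|h_\varepsilon\circ T_n-T_n\|_{C^2}\to0$.

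Your factorization $(L_n^\varepsilon-L_n)v=(L_{h_\varepsilon}-I)L_nv$ bypasses all of this. It uses only the uniform bound $\|L_n\|_{W^{1,1}\to W^{1,1}}\le\lambda_1+B$ and a near-identity estimate from \eqref{eq:composition_estimate}. It gives an explicit $O(\|\delta_\varepsilon\|_{C^1})=O(\varepsilon)$ rate and needs only $C^1$ control of $h_\varepsilon^{-1}$. The paper's route is more general, since it covers any $C^2$-small perturbation of the maps, not just post-composition kicks. Yours is shorter and self-contained for the kick family considered here.
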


\begin{proof}
Since the perturbation is a post-composition kick, $T_n^\varepsilon=h_\varepsilon\circ T_n$, the Perron--Frobenius operators satisfy
\[
L_n^\varepsilon = L_{h_\varepsilon\circ T_n}=L_{h_\varepsilon}\,L_{T_n}=L_{h_\varepsilon}L_n.
\]
Hence
\[
\frac{L_n^\varepsilon\mu_n-L_n\mu_n}{\varepsilon}
=
\frac{L_{h_\varepsilon}(L_n\mu_n)-L_n\mu_n}{\varepsilon}
=
\frac{L_{h_\varepsilon}\mu_{n+1}-\mu_{n+1}}{\varepsilon},
\qquad\text{since }\mu_{n+1}=L_n\mu_n.
\]
By Lemma~\ref{lem:Dh}(ii), for $u\in W^{2,1}$,
\[
\frac{L_{h_\varepsilon}u-u}{\varepsilon}\to Du \quad\text{in }W^{1,1}.
\]
Applying this with $u=\mu_{n+1}$ and using $\sup_n\|\mu_n\|_{W^{2,1}}<\infty$ (hence also for $\mu_{n+1}$) yields
\[
\sup_{n\in\mathbb Z}\left\|
\frac{L_n^\varepsilon\mu_n-L_n\mu_n}{\varepsilon}-D\mu_{n+1}
\right\|_{W^{1,1}}
=
\sup_{n\in\mathbb Z}\left\|
\frac{L_{h_\varepsilon}\mu_{n+1}-\mu_{n+1}}{\varepsilon}-D\mu_{n+1}
\right\|_{W^{1,1}}
\to 0,
\]
which proves (a) with $g_n=D\mu_{n+1}$ (equivalently $g_n=D(L_n\mu_n)$).
Moreover, the uniform bound $\sup_n\|g_n\|_{W^{1,1}}<\infty$ follows from Lemma~\ref{lem:Dh}(i) and the uniform $W^{2,1}$ bound on $\mu_n$.
Item (b) follows from Proposition \ref{prop:mixed_continuity_covering_self}.
\end{proof}

\subsection{Conclusion: linear response for sequential expanding maps with post-composition kicks}

\begin{theorem}[Linear response for sequential $C^3$ expanding maps]\label{thm:LR_expanding_kick_final}
Fix a $C^3$  expanding map $T_0:\mathbb S^1\to\mathbb S^1$ of degree $n\ge 2$ and let
$\delta_\ast>0$ be as in Lemma~\ref{lem:ELoM_near_T0}.
Let $(T_n)_{n\in\mathbb Z}$ be a sequence of $C^3$ expanding maps in $\mathcal S_{T_0}(\delta_\ast)$.
Let $(h_\varepsilon)_{\varepsilon\in[0,\varepsilon_0)}$ be a $C^3$ family
of $C^3$ diffeomorphisms of $\mathbb S^1$ with $h_0=\mathrm{id}$ and
\begin{equation}\label{eq:kick_assumption_thm3}
h_\varepsilon(x)=x+\varepsilon X(x)+r^\varepsilon(x),\qquad X\in C^3(\mathbb S^1),\qquad
\frac{\|r^\varepsilon\|_{C^3}}{|\varepsilon|}\xrightarrow[\varepsilon\to0]{}0.
\end{equation}
Define $T_n^\varepsilon:=h_\varepsilon\circ T_n$ and let $L_n^\varepsilon$ be the associated Perron--Frobenius operators.
Then the hypotheses of Theorem~\ref{thm:LR_strong} are satisfied with $B_s=W^{1,1}(\mathbb S^1)$ and $B_w=L^1(\mathbb S^1)$.
Consequently, if $\boldsymbol\mu^\varepsilon$ denotes the  equivariant family of $T_n^\varepsilon$  the linear response
\[
\boldsymbol\eta=\lim_{\varepsilon\to0}\frac{\boldsymbol\mu^\varepsilon-\boldsymbol\mu^0}{\varepsilon}
\quad\text{with convergence in }\mathcal B_w=\ell^\infty(\mathbb Z;L^1)
\]
it is the unique bounded solution of the resolvent equation \eqref{eq:LR_resolvent}, and it admits the causal series
\eqref{eq:LR_series}, with absolute convergence in $W^{1,1}$ uniformly in time.
\end{theorem}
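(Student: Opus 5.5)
The plan is to verify, one by one, the hypotheses of Theorem~\ref{thm:LR_strong} for $B_s=W^{1,1}(\mathbb S^1)$, $B_w=L^1(\mathbb S^1)$ and the operators $L_n^\varepsilon=L_{h_\varepsilon}L_n$, and then read off the conclusions. The hypotheses are mass preservation \eqref{eq:mass_preserving}, weak power-boundedness (A2), and Assumptions~\ref{ass:mu_bound}, \ref{ass:strong_derivative} and \ref{ass:uniform_ELoM}.

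\textbf{Structural hypotheses.} Mass preservation and positivity hold because each $L_n^\varepsilon$ is the Perron--Frobenius operator of a nonsingular map. Since Perron--Frobenius operators are $L^1$ contractions, $\|L_n^\varepsilon\|_{L^1\to L^1}\le 1$. Hence every finite composition is an $L^1$ contraction, and (A2) holds with $M_w=1$.

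\textbf{Exponential loss of memory.} The key preliminary point is that for $\varepsilon$ small the perturbed maps remain in the class $\mathcal S_{T_0}(\delta_\ast)$, so that Lemma~\ref{lem:ELoM_near_T0} applies to them. I would argue that $\|h_\varepsilon\circ T_n-T_n\|_{C^2}\le c\,\varepsilon$ uniformly in $n$. This uses the expansion \eqref{eq:kick_family} together with the uniform $C^3$ bounds on $T_n$ and the chain rule. Since the class is defined by a strict inequality, I would first shrink $\delta_\ast$ slightly: take the $T_n$ in $\mathcal S_{T_0}(\delta_\ast')$ with $\delta_\ast'<\delta_\ast$, or argue with a uniform margin. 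Then $T_n^\varepsilon\in\mathcal S_{T_0}(\delta_\ast)$ for all $n$ and all $\varepsilon<\varepsilon_0$, after reducing $\varepsilon_0$. Lemma~\ref{lem:ELoM_near_T0} then gives \eqref{eq:ELoM_eps} with $C=C_{\mathrm{ELoM}}$ and $\rho=e^{-\lambda}$. These constants depend only on $T_0$ and $\delta_\ast$, not on the sequence, and so are uniform in $\varepsilon$. This gives Assumption~\ref{ass:uniform_ELoM}.

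\textbf{Remaining assumptions.} Assumption~\ref{ass:mu_bound} comes from the lemma of Section~\ref{5.2.2}. That lemma provides equivariant families bounded uniformly in $W^{2,1}$, hence in $W^{1,1}$, and in addition gives $\sup_n\|\mu_n^0\|_{W^{3,1}}<\infty$. Assumption~\ref{ass:strong_derivative} is Lemma~\ref{lem:strong_derivative_kick}, with $\dot L_n\mu_n^0=D\mu_{n+1}^0$.

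\textbf{Main obstacle.} The step I expect to need care is uniformity in $n$ in part (a) of Lemma~\ref{lem:strong_derivative_kick}. The pointwise limit \eqref{eq:Dh_diffquot} on $W^{2,1}$ does not by itself give a supremum over $n$. I would instead invoke \eqref{eq:Dh_uniform_convergence}, which holds uniformly on $W^{3,1}$-bounded sets. Combined with the uniform $W^{3,1}$ bound on $\mu^0_{n+1}$, and after rescaling by that bound, this yields the supremum in \eqref{eq:diff_unif}. The $W^{3,1}$ bound is exactly where $C^4$ regularity of the unperturbed $T_n$ is used, so I would make this regularity explicit as a hypothesis inherited from Section~\ref{5-2-1}. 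Part (b), the mixed continuity, follows from \eqref{eq:mixed_bound_CT0} applied with $T_n$ in place of $T_0$ and the bound $\|T_n^\varepsilon-T_n\|_{C^2}=O(\varepsilon)$. The constant must be uniform over $n$; it is, because $C(\cdot)$ in \eqref{eq:CT0_explicit_final} depends only on expansion and $C^2$ bounds, which are uniform on the class.

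\textbf{Conclusion.} With all hypotheses in place, Theorem~\ref{thm:LR_strong} applies directly. It gives convergence of the difference quotients in $\ell^\infty(\mathbb Z;L^1)$ and identifies the limit as the unique solution in $\mathcal V_s$ of \eqref{eq:LR_resolvent}; by Proposition~\ref{prop:unique_fixedpoint}-type reasoning, this is the unique bounded solution. It also gives the causal series \eqref{eq:LR_series}, converging absolutely in $W^{1,1}$ uniformly in $n$.
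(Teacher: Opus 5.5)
Your proposal is correct and is essentially the paper's proof. The paper likewise collects the exponential loss of memory from Lemma~\ref{lem:ELoM_near_T0}, the uniform bounds from the lemma of Section~\ref{5.2.2}, and the differentiability from Lemma~\ref{lem:strong_derivative_kick}, and then invokes Theorem~\ref{thm:LR_strong}. The extra points you flag are details the paper leaves implicit, and you handle them soundly: a uniform margin so that $T_n^\varepsilon=h_\varepsilon\circ T_n$ stays in $\mathcal S_{T_0}(\delta_\ast)$, the $W^{3,1}$-uniform convergence \eqref{eq:Dh_uniform_convergence} (rather than the pointwise $W^{2,1}$ statement) to get the supremum over $n$, and the tacit $C^4$ hypothesis on the $T_n$.
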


\begin{proof}
The exponential loss of memory, Assumption \ref{ass:uniform_ELoM},
is verified under the hipotesis of the statement in Section \ref{5.1}.
The uniform strong bounds required in Assumption \ref{ass:uniform_ELoM} was verified in Section \ref{5.2.2}.
The differentiability, Assumption \ref{ass:strong_derivative} was verified in Section \ref{subsubsec:strongdiff_kick}.
The application of Theorem~\ref{thm:LR_strong} directly lead to the statement.
\end{proof}

{
\section{Response for sequential random systems with additive noise: Application to the Euler-Maruyama Integration Scheme}
\label{sec:LR_random_noise}
}
In this section we apply the abstract linear response theorem
(Theorem~\ref{thm:LR_strong}) to sequential random maps with additive
noise on a compact phase space. 
We will see a general setting under which we can establish a linear response for such systems (see Theorem \ref{thm:LR_noise}).
To show the flexibility of this approach then we will apply it to a non-trivial, physically interesting system such as a suitable discretization of the Ghil-Sellers energy balance model  (see Section \ref{subsec:LR_Ghil_Sellers}).

We
consider either
\[
X=\mathbb T^d=(\mathbb S^1)^d
\qquad\text{or}\qquad
X=I^d,
\]
where \(I=[a,b]\subset\mathbb R\) is a compact interval. In the torus
case, addition is understood modulo \(\mathbb Z^d\). In the cube case,
the additive perturbation is reflected at the boundary by means of a
standard folding map defined below. We denote by \(m\) the normalized
Lebesgue measure on \(X\).

The assumptions below are formulated directly in terms of the density
of the additive noise. The only regularity condition needed for the
linear-response argument is \(W^{1,1}\)-regularity of this density.
In particular, no differentiability of the reflection map is required.

\subsection{Periodic and reflected additive noise}
We introduce two examples of random systems with additive noise on a compact domain, corresponding to periodic or reflecting boundary conditions.
\paragraph{Periodic additive noise on \(\mathbb T^d\).}
Let \((\xi_n)_{n\in\mathbb Z}\) be i.i.d.\ random variables on
\(\mathbb T^d\) with common density
\[
q\in W^{1,1}(\mathbb T^d),
\qquad
q\geq0,
\qquad
\int_{\mathbb T^d}q\,dm=1.
\]
We assume that the density is uniformly positive:
\begin{equation}
\label{eq:noise_positive_torus}
q(y)\geq\alpha>0
\qquad\text{for \(m\)-a.e.\ }y\in\mathbb T^d.
\end{equation}
If \(z\in\mathbb T^d\), the random variable \(z+\xi_n\) has density
\begin{equation}
\label{eq:torus_noise_kernel}
k_{\mathbb T}(z,y):=q(y-z)
\end{equation}
with respect to \(m\).

\paragraph{Reflected additive noise on \(I^d\).}
Set \(\ell=b-a\). The one-dimensional folding map
\(\mathcal R_I:\mathbb R\to I\) is defined by
\begin{equation}
\label{eq:folding_map_1d}
\mathcal R_I(s)
:=
a+\ell-
\left|
\bigl((s-a)\!\!\!\pmod{2\ell}\bigr)-\ell
\right|.
\end{equation}
It repeatedly reflects the real line at the endpoints \(a\) and \(b\).
The reflection map on the cube is defined componentwise:
\[
\mathcal R(x_1,\ldots,x_d)
:=
\bigl(\mathcal R_I(x_1),\ldots,\mathcal R_I(x_d)\bigr).
\]

Let \((\xi_n)_{n\in\mathbb Z}\) be i.i.d.\ random variables in
\(\mathbb R^d\) with common density
\[
q\in W^{1,1}(\mathbb R^d),
\qquad
q\geq0,
\qquad
\int_{\mathbb R^d}q(x)\,dx=1.
\]
Given \(z\in\mathbb R^d\), the reflected additive-noise step is
\[
Y=\mathcal R(z+\xi_n).
\]
Its law is the pushforward under \(\mathcal R\) of the probability
measure with density \(q(\cdot-z)\). We denote its density with
respect to \(m\) by \(k_I(z,\cdot)\), so that
\begin{equation}
\label{eq:reflected_kernel_pushforward}
k_I(z,\cdot)\,m
=
\mathcal R_*\bigl(q(\cdot-z)\,dx\bigr).
\end{equation}

\subsection{Regularity of the noise kernels}

We now study some regularity property of such systems. This will be used later in the main
linear-response argument.

\begin{lemma}[Differentiability of translated and reflected noise kernels]
\label{lem:noise_kernel_differentiability}
Assume that \(q\in W^{1,1}\) on \(\mathbb T^d\) in the periodic case,
or on \(\mathbb R^d\) in the reflected case.

\begin{enumerate}
\item In the periodic case, the map
\[
z\longmapsto k_{\mathbb T}(z,\cdot)
\]
is continuously differentiable from \(\mathbb T^d\) to
\(L^1(\mathbb T^d,m)\), and
\begin{equation}
\label{eq:torus_noise_derivative}
D_1k_{\mathbb T}(z,\cdot)[u]
=
-\nabla q(\cdot-z)\cdot u
\end{equation}

where $D_1$ denotes the Fréchet derivative with respect to the first variable, viewing $z\mapsto k_X(z,\cdot)$ as an $L^1(X,m)$-valued map.

\item In the reflected case, the map
\[
z\longmapsto k_I(z,\cdot)
\]
is continuously differentiable from \(\mathbb R^d\) to
\(L^1(I^d,m)\), and
\begin{equation}
\label{eq:reflected_noise_derivative}
D_1k_I(z,\cdot)[u]\,m
=
\mathcal R_*
\bigl[-\nabla q(\cdot-z)\cdot u\,dx\bigr].
\end{equation}
\end{enumerate}

In both cases,
\begin{equation}
\label{eq:kernel_derivative_bound}
\|D_1k_X(z,\cdot)[u]\|_{L^1(m)}
\leq
\|\nabla q\|_{L^1}\,|u|,
\end{equation}
where \(k_X=k_{\mathbb T}\) or \(k_X=k_I\), respectively. Moreover,
\begin{equation}
\label{eq:kernel_translation_bound}
\|k_X(z,\cdot)-k_X(z',\cdot)\|_{L^1(m)}
\leq
\|\nabla q\|_{L^1}\,|z-z'|.
\end{equation}
\end{lemma}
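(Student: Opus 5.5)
The plan is to reduce everything to the basic fact that translation is differentiable in $L^1$ for $W^{1,1}$ functions, and then to transport the result through the pushforward $\mathcal R_*$, which is an $L^1$-contraction.

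\emph{Periodic case.} For $q\in W^{1,1}$ we use the identity
\[
q(y-z-u)-q(y-z)=-\int_0^1\nabla q(y-z-tu)\cdot u\,dt .
\]
This holds first for smooth $q$ and then for $q\in W^{1,1}$ by density, since both sides are $L^1$-continuous in $q$. Integrating in $y$ and using translation invariance of $m$ gives
\[
\Bigl\|q(\cdot-z-u)-q(\cdot-z)+\nabla q(\cdot-z)\cdot u\Bigr\|_{L^1}\le |u|\sup_{t\in[0,1]}\|\nabla q(\cdot-tu)-\nabla q\|_{L^1}.
\]
The right-hand side is $o(|u|)$ by continuity of translations in $L^1$ applied to $\nabla q\in L^1$. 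This proves Fréchet differentiability with derivative \eqref{eq:torus_noise_derivative}. Continuity of $z\mapsto D_1k_{\mathbb T}(z,\cdot)$ in operator norm follows from the same continuity of translations, because
\[
\|D_1k(z)-D_1k(z')\|\le\|\nabla q(\cdot-z)-\nabla q(\cdot-z')\|_{L^1}.
\]
The bound \eqref{eq:kernel_derivative_bound} is immediate from the pointwise estimate $|\nabla q\cdot u|\le|\nabla q|\,|u|$. For \eqref{eq:kernel_translation_bound}, integrate the same identity along the segment from $z$ to $z'$.

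\emph{Reflected case.} The key observation is that the pushforward $\mathcal R_*$, acting on finite signed measures with $L^1$ densities on $\mathbb R^d$, is a linear map that sends absolutely continuous measures to absolutely continuous measures on $I^d$ and satisfies
\[
\|\mathcal R_*(f\,dx)\|_{L^1(m)}\le\|f\|_{L^1(\mathbb R^d)},
\]
up to the normalisation of $m$, which I will track so that the constants match. Absolute continuity holds because $\mathcal R$ is piecewise an isometry: on each of countably many cells it is a reflection or translation. Thus $\mathcal R_*(f\,dx)$ has density $\sum_{\text{cells}}f\circ\phi_j^{-1}$, and the contraction follows from $|\sum|\le\sum|\cdot|$. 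One could also argue via total variation, since pushforwards contract total variation. Writing
\[
k_I(z,\cdot)\,m=\mathcal R_*\bigl(q(\cdot-z)\,dx\bigr),
\]
we see that the map $z\mapsto k_I(z,\cdot)$ is the composition of the Euclidean translation map treated above (now on $\mathbb R^d$, with the same argument) with the bounded linear operator $\mathcal R_*:L^1(\mathbb R^d)\to L^1(I^d,m)$. Differentiability, the formula \eqref{eq:reflected_noise_derivative}, continuity of the derivative, and both bounds then transfer directly by the chain rule and the contraction property.

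\emph{Main obstacle.} The only delicate point is the rigorous statement that $\mathcal R_*$ maps $L^1(\mathbb R^d)$ densities to $L^1(I^d)$ densities with norm at most $1$. This requires checking that the folding map is piecewise isometric on a countable partition up to null sets, namely the hyperplanes $a+k\ell$ in each coordinate. I will handle it coordinatewise using the product structure of $\mathcal R$. I will also make sure the normalisation of $m$ is consistent with the stated constant, choosing the convention in which $\|\cdot\|_{L^1(m)}$ is the total variation norm.
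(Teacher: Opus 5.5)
Your proposal is correct and follows essentially the same route as the paper: you use differentiability of the translation map $z\mapsto q(\cdot-z)$ in $L^1$ for $q\in W^{1,1}$, then transport this through the pushforward $\mathcal R_*$, a linear contraction in total variation, so that the derivative formula and both bounds pass to the reflected kernel. The paper cites the translation fact as standard and does not state that $\mathcal R_*$ preserves absolute continuity; you supply both details (the integral identity combined with continuity of translations, and the piecewise-isometric cell decomposition of the folding map), which makes the argument more complete without changing it.
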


\begin{proof}
It is standard that, for \(q\in W^{1,1}\), the translation map
\[
z\longmapsto q(\cdot-z)
\]
is continuously differentiable with values in \(L^1\), with derivative
\[
u\longmapsto-\nabla q(\cdot-z)\cdot u.
\]
The estimate
\eqref{eq:kernel_translation_bound} in the periodic case follows from
the fundamental theorem of calculus along the segment joining \(z\)
and \(z'\).

In the reflected case, use
\eqref{eq:reflected_kernel_pushforward}. Pushforward by a measurable
map is a contraction in total variation:
\[
\|\mathcal R_*\nu\|_{\mathrm{TV}}
\leq
\|\nu\|_{\mathrm{TV}}.
\]
Applying this contraction to the translated densities and their
difference quotients gives,
\eqref{eq:kernel_derivative_bound}, and
\eqref{eq:kernel_translation_bound}. Notice that no differentiability
of the folding map \(\mathcal R\) is used.
\end{proof}

For later use, define
\begin{equation}
\label{eq:gradient_translation_modulus}
\omega_q(\delta)
:=
\sup_{|h|\leq\delta}
\|\nabla q(\cdot-h)-\nabla q\|_{L^1}.
\end{equation}
Since translations are continuous in \(L^1\),
\begin{equation}
\label{eq:gradient_translation_modulus_zero}
\omega_q(\delta)\longrightarrow0
\qquad\text{as }\delta\to0.
\end{equation}
In the reflected case, the corresponding modulus for
\(D_1k_I\) is bounded by \(\omega_q\), again by contraction of
pushforwards in total variation.

\subsection{Sequential random maps and annealed transfer operators}\label{K}

Let \((f_n^\varepsilon)_{n\in\mathbb Z}\) be a family of measurable
maps. In the periodic case,
\[
f_n^\varepsilon:X\longrightarrow\mathbb T^d.
\]
In the reflected case,
\[
f_n^\varepsilon:X\longrightarrow\mathbb R^d,
\]
and we assume that the images of all the maps \(f_n^\varepsilon\), for
all sufficiently small \(\varepsilon\), are contained in a compact
set \(\mathcal K \subseteq \mathbb{R}^d \).

We assume that the perturbation is uniformly differentiable:
there exist measurable maps
\(\dot f_n:X\to\mathbb R^d\) and remainders
\(r_n^\varepsilon:X\to\mathbb R^d\) such that
\begin{equation}
\label{eq:fn_eps}
f_n^\varepsilon
=
f_n^0+\varepsilon\dot f_n+r_n^\varepsilon,
\qquad
\sup_n\|\dot f_n\|_{L^\infty}<\infty,
\qquad
\sup_n
\frac{\|r_n^\varepsilon\|_{L^\infty}}{|\varepsilon|}
\xrightarrow[\varepsilon\to0]{}0.
\end{equation}
In the torus case, this expansion is understood through compatible
periodic lifts to \(\mathbb R^d\).

The corresponding random dynamics is
\[
X_{n+1}
=
f_n^\varepsilon(X_n)+\xi_n
\quad\text{on }\mathbb T^d
\]
in the periodic case, and
\[
X_{n+1}
=
\mathcal R\bigl(f_n^\varepsilon(X_n)+\xi_n\bigr)
\quad\text{on }I^d
\]
in the reflected case.

Writing \(k_X=k_{\mathbb T}\) or \(k_X=k_I\), the transition kernel is
\begin{equation}
\label{eq:general_noise_kernel}
K_n^\varepsilon(x,dy)
=
k_X\bigl(f_n^\varepsilon(x),y\bigr)\,m(dy).
\end{equation}
The associated annealed transfer operator
\(L_n^\varepsilon:L^1(X,m)\to L^1(X,m)\) is
\begin{equation}
\label{eq:annealed_L}
(L_n^\varepsilon\varphi)(y)
:=
\int_X
\varphi(x)\,
k_X\bigl(f_n^\varepsilon(x),y\bigr)\,dm(x).
\end{equation}
Each \(L_n^\varepsilon\) is positive and preserves mass:
\begin{equation}
\label{eq:mass_preserving_noise}
\int_XL_n^\varepsilon\varphi\,dm
=
\int_X\varphi\,dm,
\qquad
\varphi\in L^1(X,m).
\end{equation}
We set
\[
B_s=B_w=L^1(X,m),
\qquad
V_s=
\left\{
\varphi\in L^1(X,m):
\int_X\varphi\,dm=0
\right\}.
\]

Since the noise variables are i.i.d.\ and independent of the past,
the process is a time-inhomogeneous Markov chain. Its time marginals
satisfy
\[
\mu_{n+1}^\varepsilon
=
L_n^\varepsilon\mu_n^\varepsilon.
\]
Accordingly, the natural nonautonomous statistical state is an
equivariant family of probability densities
\(\boldsymbol\mu^\varepsilon=(\mu_n^\varepsilon)_{n\in\mathbb Z}\).

 In the reflected case, the centres of the noise will range in the fixed
compact set \(\mathcal K\subset\mathbb R^d\). 
In this case, we will assume that
\begin{equation}
\label{eq:noise_positive_reflected}
\alpha
:=
\ell^d
\operatorname*{ess\,inf}_{\substack{z\in\mathcal K\\y\in I^d}}
q(y-z)
>0.
\end{equation}
This condition is automatic, for example, for every nondegenerate
Gaussian density, since such a density is continuous and strictly
positive and the set
\[
\{y-z:y\in I^d,\ z\in\mathcal K\}
\]
is compact. In this case, one obtains
\begin{equation}
\label{eq:reflected_kernel_lower_bound}
k_I(z,y)\geq\alpha
\qquad
\text{for every \(z\in\mathcal K\) and \(m\)-a.e.\ \(y\in I^d\)}.
\end{equation}

\subsection{Uniform exponential loss of memory}

By \eqref{eq:noise_positive_torus} in the periodic case and by
\eqref{eq:reflected_kernel_lower_bound} in the reflected case, one has
\begin{equation}
\label{eq:noise_minorization}
K_n^\varepsilon(x,A)
\geq
\alpha\,m(A)
\end{equation}
for every \(n\), every sufficiently small \(\varepsilon\), every
\(x\in X\), and every measurable \(A\subset X\). Hence the Doeblin
minorization \eqref{eq:minorization} holds uniformly in \(n\) and
\(\varepsilon\).

By Theorem~\ref{thm:doeblin_ELoM}, for all \(m<n\) and every
\(h\in V_s\),
\begin{equation}
\label{eq:ELoM_noise}
\|L_{n-1}^\varepsilon\cdots L_m^\varepsilon h\|_{L^1}
\leq
(1-\alpha)^{n-m}\|h\|_{L^1}.
\end{equation}
Therefore Assumption~\ref{ass:uniform_ELoM} holds with
\[
C=1,
\qquad
\rho=1-\alpha,
\]
uniformly in \(\varepsilon\).

The same estimate gives existence and uniqueness of an equivariant
family. Fix a probability density \(u\in L^1(X,m)\) and define
\[
\mu_n^{\varepsilon,(j)}
:=
L_{n-1}^\varepsilon\cdots L_{n-j}^\varepsilon u.
\]
If \(j'>j\), then the two pullback iterates can be written as the
images under the same \(j\)-step composition of two probability
densities. Hence
\[
\|\mu_n^{\varepsilon,(j')}
-\mu_n^{\varepsilon,(j)}\|_{L^1}
\leq
2(1-\alpha)^j.
\]
Thus the pullback sequence is Cauchy in \(L^1\), uniformly in \(n\),
and converges to an equivariant family
\(\boldsymbol\mu^\varepsilon\). Uniqueness follows from
\eqref{eq:ELoM_noise}. Moreover,
\[
\|\mu_n^\varepsilon\|_{L^1}=1
\]
for every \(n\) and \(\varepsilon\), so
Assumption~\ref{ass:mu_bound} holds with \(M_s=1\).

\subsection{Differentiability of the transfer operators with respect to the perturbation}

Define
\(\dot L_n:L^1(X,m)\to L^1(X,m)\) by
\begin{equation}
\label{eq:dotL_noise}
(\dot L_n\varphi)(y)
:=
\int_X
\varphi(x)\,
D_1k_X\bigl(f_n^0(x),y\bigr)
[\dot f_n(x)]\,dm(x).
\end{equation}
By Lemma~\ref{lem:noise_kernel_differentiability},
\begin{equation}
\label{eq:dotL_noise_bound}
\|\dot L_n\varphi\|_{L^1}
\leq
\|\nabla q\|_{L^1}
\|\dot f_n\|_{L^\infty}
\|\varphi\|_{L^1}.
\end{equation}
In particular,
\[
\sup_n\|\dot L_n\|_{L^1\to L^1}<\infty.
\]
Since the kernels \(k_X(z,\cdot)\) have total mass one,
differentiation with respect to \(z\) gives
\begin{equation}
\label{eq:dotL_zero_mass}
\int_X\dot L_n\varphi\,dm=0
\qquad
\text{for every }\varphi\in L^1(X,m).
\end{equation}

\begin{lemma}
\label{lem:noise_operator_differentiability}
Under \eqref{eq:fn_eps}, the annealed transfer operators are uniformly
differentiable in operator norm:
\begin{equation}
\label{eq:noise_operator_derivative}
\sup_{n\in\mathbb Z}
\left\|
\frac{L_n^\varepsilon-L_n^0}{\varepsilon}
-\dot L_n
\right\|_{L^1\to L^1}
\xrightarrow[\varepsilon\to0]{}0.
\end{equation}
Moreover,
\begin{equation}
\label{eq:noise_operator_continuity}
\sup_{n\in\mathbb Z}
\|L_n^\varepsilon-L_n^0\|_{L^1\to L^1}
\xrightarrow[\varepsilon\to0]{}0.
\end{equation}
\end{lemma}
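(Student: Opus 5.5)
The plan is to reduce both operator-norm statements to pointwise-in-$x$ estimates on the kernels, integrated against $|\varphi(x)|$. For any $\varphi\in L^1(X,m)$, Fubini gives
\[
\Bigl\|\Bigl(\tfrac{L_n^\varepsilon-L_n^0}{\varepsilon}-\dot L_n\Bigr)\varphi\Bigr\|_{L^1}
\le \int_X |\varphi(x)|\,
\Bigl\|\tfrac{k_X(f_n^\varepsilon(x),\cdot)-k_X(f_n^0(x),\cdot)}{\varepsilon}-D_1k_X(f_n^0(x),\cdot)[\dot f_n(x)]\Bigr\|_{L^1(m)}\,dm(x).
\]
It therefore suffices to bound the inner $L^1$ norm by a quantity $\theta(\varepsilon)$ that is independent of $x$ and $n$ and tends to $0$. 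The operator-norm bound is then $\sup_n\|\cdot\|\le\theta(\varepsilon)$. Measurability of the integrand in $x$ follows from continuity of $z\mapsto k_X(z,\cdot)$ in $L^1$ (Lemma~\ref{lem:noise_kernel_differentiability}) composed with measurable $f_n^\varepsilon$.

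For the pointwise estimate, fix $x$ and set $z=f_n^0(x)$ and $w=f_n^\varepsilon(x)-f_n^0(x)=\varepsilon\dot f_n(x)+r_n^\varepsilon(x)$. By \eqref{eq:fn_eps}, $|w|\le \varepsilon A+o(\varepsilon)$ uniformly in $x,n$, where $A:=\sup_n\|\dot f_n\|_\infty$. In the torus case use the periodic lift. Since $z\mapsto k_X(z,\cdot)$ is $C^1$ into $L^1$, the fundamental theorem of calculus gives
\[
k_X(z+w,\cdot)-k_X(z,\cdot)=\int_0^1 D_1k_X(z+tw,\cdot)[w]\,dt.
\]
Subtracting $\varepsilon D_1k_X(z,\cdot)[\dot f_n(x)]$ splits the error into two terms.

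The first term is $\int_0^1\bigl(D_1k_X(z+tw,\cdot)-D_1k_X(z,\cdot)\bigr)[w]\,dt$. Its $L^1$ norm is at most $\omega_q(|w|)\,|w|$, using \eqref{eq:torus_noise_derivative} and translation invariance of the $L^1$ norm in the periodic case. In the reflected case one uses \eqref{eq:reflected_noise_derivative} together with the fact that pushforward by $\mathcal R$ contracts total variation, so the modulus is again controlled by $\omega_q$.

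The second term is $D_1k_X(z,\cdot)[r_n^\varepsilon(x)]$. By \eqref{eq:kernel_derivative_bound} it has norm at most $\|\nabla q\|_{L^1}\|r_n^\varepsilon\|_\infty$.

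Dividing by $\varepsilon$ yields
\[
\theta(\varepsilon)=\omega_q\bigl(\varepsilon A+o(\varepsilon)\bigr)\bigl(A+o(1)\bigr)+\|\nabla q\|_{L^1}\sup_n\frac{\|r_n^\varepsilon\|_\infty}{|\varepsilon|},
\]
which tends to $0$ by \eqref{eq:gradient_translation_modulus_zero} and \eqref{eq:fn_eps}. This proves \eqref{eq:noise_operator_derivative}.

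The continuity statement \eqref{eq:noise_operator_continuity} is simpler. By \eqref{eq:kernel_translation_bound} the inner norm is at most $\|\nabla q\|_{L^1}\|f_n^\varepsilon-f_n^0\|_\infty\le \|\nabla q\|_{L^1}(\varepsilon A+o(\varepsilon))$, uniformly in $n$. It also follows directly from \eqref{eq:noise_operator_derivative} combined with \eqref{eq:dotL_noise_bound}.

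The main obstacle is the reflected case. There one has to be sure that the segment $z+tw$ may leave $\mathcal K$ without harm; this is fine because Lemma~\ref{lem:noise_kernel_differentiability} holds on all of $\mathbb R^d$. One also has to justify that the modulus of continuity of $z\mapsto D_1k_I(z,\cdot)$ is bounded by $\omega_q$. I would handle this by writing $D_1k_I(z',\cdot)[u]-D_1k_I(z,\cdot)[u]$ as $\mathcal R_*$ applied to $-(\nabla q(\cdot-z')-\nabla q(\cdot-z))\cdot u\,dx$, and then using the total-variation contraction of pushforwards, exactly as the remark after \eqref{eq:gradient_translation_modulus_zero} indicates.
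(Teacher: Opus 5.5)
Your proposal is correct and follows essentially the same route as the paper. The fundamental theorem of calculus for the $L^1$-valued map $z\mapsto k_X(z,\cdot)$ splits the kernel error into the term $D_1k_X(f_n^0(x),\cdot)[r_n^\varepsilon(x)/\varepsilon]$ and an $\omega_q$-controlled integral term, both bounded uniformly in $x$ and $n$, and integrating against $|\varphi(x)|\,dm(x)$ gives \eqref{eq:noise_operator_derivative}, while \eqref{eq:noise_operator_continuity} follows from \eqref{eq:kernel_translation_bound}; your remarks on measurability and on the segment $z+tw$ leaving $\mathcal K$ in the reflected case are details the paper leaves implicit.
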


\begin{proof}
Set
\[
\Delta_n^\varepsilon
:=
f_n^\varepsilon-f_n^0
=
\varepsilon\dot f_n+r_n^\varepsilon.
\]
Using the fundamental theorem of calculus for the \(L^1\)-valued map
\(z\mapsto k_X(z,\cdot)\), one obtains
\begin{align*}
&
\frac{
k_X(f_n^\varepsilon(x),\cdot)
-
k_X(f_n^0(x),\cdot)
}{\varepsilon}
-
D_1k_X(f_n^0(x),\cdot)[\dot f_n(x)]
\\
&\quad =
D_1k_X(f_n^0(x),\cdot)
\left[
\frac{r_n^\varepsilon(x)}{\varepsilon}
\right]
\\
&\qquad\quad
+
\int_0^1
\Bigl(
D_1k_X(f_n^0(x)+t\Delta_n^\varepsilon(x),\cdot)
-
D_1k_X(f_n^0(x),\cdot)
\Bigr)
\left[
\frac{\Delta_n^\varepsilon(x)}{\varepsilon}
\right]dt.
\end{align*}
By Lemma~\ref{lem:noise_kernel_differentiability} and
\eqref{eq:gradient_translation_modulus},
\begin{align*}
&
\sup_{n,x}
\left\|
\frac{
k_X(f_n^\varepsilon(x),\cdot)
-
k_X(f_n^0(x),\cdot)
}{\varepsilon}
-
D_1k_X(f_n^0(x),\cdot)[\dot f_n(x)]
\right\|_{L^1}
\\
&\quad\leq
\|\nabla q\|_{L^1}
\sup_n
\frac{\|r_n^\varepsilon\|_{L^\infty}}{|\varepsilon|}
\\
&\qquad
+
\omega_q\!\left(
\sup_n\|\Delta_n^\varepsilon\|_{L^\infty}
\right)
\left(
\sup_n\|\dot f_n\|_{L^\infty}
+
\sup_n
\frac{\|r_n^\varepsilon\|_{L^\infty}}{|\varepsilon|}
\right).
\end{align*}
The right-hand side tends to zero by
\eqref{eq:fn_eps} and
\eqref{eq:gradient_translation_modulus_zero}. Integrating against
\(|\varphi(x)|\,dm(x)\) proves
\eqref{eq:noise_operator_derivative}.

Similarly, \eqref{eq:kernel_translation_bound} gives
\[
\|(L_n^\varepsilon-L_n^0)\varphi\|_{L^1}
\leq
\|\nabla q\|_{L^1}
\|f_n^\varepsilon-f_n^0\|_{L^\infty}
\|\varphi\|_{L^1}.
\]
Taking the supremum over \(n\) proves
\eqref{eq:noise_operator_continuity}.
\end{proof}

Let
\[
\boldsymbol\mu=\boldsymbol\mu^0
\]
be the unperturbed equivariant family, and set
\begin{equation}
\label{eq:dotL_along_mu_noise}
g_n:=\dot L_n\mu_n.
\end{equation}
By \eqref{eq:dotL_noise_bound},
\begin{equation}
\label{eq:forcing_noise_bound}
\sup_n\|g_n\|_{L^1}
\leq
\|\nabla q\|_{L^1}
\sup_n\|\dot f_n\|_{L^\infty}
<\infty.
\end{equation}
Moreover, Lemma~\ref{lem:noise_operator_differentiability} gives
\begin{equation}
\label{eq:strong_derivative_noise}
\lim_{\varepsilon\to0}
\sup_{n\in\mathbb Z}
\left\|
\frac{
L_n^\varepsilon\mu_n-L_n^0\mu_n
}{\varepsilon}
-g_n
\right\|_{L^1}
=0.
\end{equation}
Thus Assumption~\ref{ass:strong_derivative} holds with
\(B_s=B_w=L^1(X,m)\).

In the periodic case, the derivative operator has the explicit form
\begin{equation}
\label{eq:dotL_torus_explicit}
(\dot L_n\varphi)(y)
=
-
\int_{\mathbb T^d}
\varphi(x)\,
\nabla q\bigl(y-f_n^0(x)\bigr)
\cdot\dot f_n(x)\,dm(x).
\end{equation}
In the reflected case, the derivative is given by
\eqref{eq:dotL_noise} together with
\eqref{eq:reflected_noise_derivative}.

\subsection{Linear response}
We are finally ready to state a first general result on linear response for sequential random systems with additive noise.
\begin{theorem}[Linear response for compact random maps with additive noise]
\label{thm:LR_noise}
Assume the noise hypotheses stated above and the uniform
differentiability condition \eqref{eq:fn_eps}. Let
\(\boldsymbol\mu^\varepsilon\) be the equivariant family associated
with \((L_n^\varepsilon)_{n\in\mathbb Z}\). Then the hypotheses of
Theorem~\ref{thm:LR_strong} are satisfied with
\[
B_s=B_w=L^1(X,m).
\]
In particular,
\[
\boldsymbol\eta
=
\lim_{\varepsilon\to0}
\frac{
\boldsymbol\mu^\varepsilon-\boldsymbol\mu^0
}{\varepsilon}
\]
exists in
\[
\mathcal B_w
=
\ell^\infty(\mathbb Z;L^1(X,m)).
\]
Writing
\[
g_n=\dot L_n\mu_n,
\qquad
\boldsymbol\mu=\boldsymbol\mu^0,
\]
one has, for every \(n\in\mathbb Z\),
\begin{equation}
\label{eq:response_series_compact_noise}
\eta_n
=
g_{n-1}
+
\sum_{j=1}^{\infty}
L_{n-1}L_{n-2}\cdots L_{n-j}\,
g_{n-j-1}.
\end{equation}
The series converges absolutely in \(L^1(X,m)\), uniformly in \(n\).
\end{theorem}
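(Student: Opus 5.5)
The plan is to verify, one at a time, the hypotheses of Theorem~\ref{thm:LR_strong} with $B_s=B_w=L^1(X,m)$, and then read off the series formula \eqref{eq:response_series_compact_noise} from \eqref{eq:LR_series}. Most of the ingredients were established in the preceding subsections; what remains is to collect them and check that nothing is lost when the strong and weak spaces coincide.

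\textbf{Verifying the hypotheses.}
\begin{enumerate}
\item Mass preservation \eqref{eq:mass_preserving} is \eqref{eq:mass_preserving_noise}. Positivity is clear, since the kernel $k_X$ is nonnegative.
\item Weak power-boundedness (A2) holds with $M_w=1$. Each $L_n^\varepsilon$ is a positive, mass-preserving integral operator, so it is an $L^1$ contraction: $\|L_n^\varepsilon\varphi\|_{L^1}\le \int\int|\varphi(x)|k_X(f_n^\varepsilon(x),y)\,dm(x)\,dm(y)=\|\varphi\|_{L^1}$. Hence $\|\mathbb T_\varepsilon^k\|_{\mathcal V_w\to\mathcal V_w}\le 1$.
\item Assumption~\ref{ass:mu_bound} holds with $M_s=1$. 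This uses the pullback construction above, which is Cauchy in $L^1$ uniformly in $n$. Every $\mu_n^\varepsilon$ is a probability density, so the bound is uniform in $\varepsilon$ as well.
\item Assumption~\ref{ass:uniform_ELoM} is \eqref{eq:ELoM_noise}, with $C=1$ and $\rho=1-\alpha$. It follows from the uniform Doeblin minorization \eqref{eq:noise_minorization} through Theorem~\ref{thm:doeblin_ELoM}. Uniformity in $\varepsilon$ is the point to check: it holds because $\alpha$ does not depend on $\varepsilon$ or $n$, and in the reflected case because all centres $f_n^\varepsilon(x)$ lie in the fixed compact set $\mathcal K$.
\item Assumption~\ref{ass:strong_derivative} has three parts. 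The uniform difference-quotient limit is \eqref{eq:strong_derivative_noise}. The bound \eqref{eq:dotL_bound} is \eqref{eq:forcing_noise_bound}. The mixed-norm continuity reduces, since $B_s=B_w$, to the operator-norm continuity \eqref{eq:noise_operator_continuity}.
\end{enumerate}
All of these bounds rest on Lemma~\ref{lem:noise_operator_differentiability}.

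\textbf{A subtlety when $B_s=B_w$.} Here the strong space is no more compact than the weak one, so a mixed-norm argument gains nothing. That is harmless: Lemma~\ref{lem:resolvent_continuity_mixed} applies verbatim with $\|\cdot\|_w=\|\cdot\|_s$. Its hypothesis (A3) becomes operator-norm closeness in $L^1$, which \eqref{eq:noise_operator_continuity} provides. Theorem~\ref{thm:LR_strong} therefore yields existence of $\boldsymbol\eta$ as an $\ell^\infty(\mathbb Z;L^1)$-limit, and identifies it as $(I-\mathbb T)^{-1}\mathbb S^{-1}\boldsymbol g$. One further point needs confirming: $\boldsymbol g\in\mathcal V_s$, that is, $g_n$ has zero mass. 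This is exactly \eqref{eq:dotL_zero_mass}.

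\textbf{The series formula.} The index shift is where a mismatch could arise. Starting from the Neumann series $\eta_n=\sum_{k\ge0}(\mathbb T^k\mathbb S^{-1}\boldsymbol g)_n$, I use $(\mathbb S^{-1}\boldsymbol g)_m=g_{m-1}$ and $(\mathbb T^k\boldsymbol v)_n=L_{n-1}\cdots L_{n-k}v_{n-k}$. Then the $k=0$ term is $g_{n-1}$, and the $k=j\ge1$ term is $L_{n-1}\cdots L_{n-j}\,g_{n-j-1}$. This gives \eqref{eq:response_series_compact_noise}, i.e.\ \eqref{eq:LR_series} with the $k=0$ term written out explicitly.

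\textbf{Convergence.} Each summand has zero mass. The loss-of-memory estimate \eqref{eq:ELoM_noise} then bounds the $j$-th term by
\[
(1-\alpha)^j\,\|\nabla q\|_{L^1}\sup_n\|\dot f_n\|_{L^\infty},
\]
which gives absolute convergence in $L^1$, uniformly in $n$.
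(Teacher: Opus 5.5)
Your proposal is correct and follows the paper's proof essentially step by step: mass preservation, (A2) with $M_w=1$ from the Markov property, Assumption~\ref{ass:uniform_ELoM} from the Doeblin estimate \eqref{eq:ELoM_noise} with $C=1$ and $\rho=1-\alpha$, Assumption~\ref{ass:mu_bound} with $M_s=1$, and Assumption~\ref{ass:strong_derivative} from Lemma~\ref{lem:noise_operator_differentiability}; the geometric domination of the series then comes from the zero-mass forcing together with \eqref{eq:ELoM_noise} and \eqref{eq:forcing_noise_bound}. Your explicit index computation from the Neumann series \eqref{eq:LR_neumann} is the right way to get the leading term $g_{n-1}$, but note that \eqref{eq:LR_series} as printed starts at $k=1$ and omits this $k=0$ term, so \eqref{eq:response_series_compact_noise} is the corrected form of that formula rather than literally \eqref{eq:LR_series} ``with the $k=0$ term written out''.
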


\begin{proof}
Mass preservation follows from
\eqref{eq:mass_preserving_noise}. Since \(B_w=L^1(X,m)\) and every
\(L_n^\varepsilon\) is Markov, the weak power-boundedness assumption
\emph{(A2)} holds with \(M_w=1\). Uniform exponential loss of memory
on the zero-mass subspace follows from
\eqref{eq:ELoM_noise}, with \(C=1\) and
\(\rho=1-\alpha\). The equivariant family satisfies
\[
\sup_{\varepsilon,n}
\|\mu_n^\varepsilon\|_{L^1}=1.
\]
Finally, Assumption~\ref{ass:strong_derivative} follows from
\eqref{eq:noise_operator_derivative},
\eqref{eq:noise_operator_continuity}, and
\eqref{eq:strong_derivative_noise}. Therefore
Theorem~\ref{thm:LR_strong} applies.

Every \(g_n\) has zero mass by
\eqref{eq:dotL_zero_mass}. Hence
\eqref{eq:ELoM_noise} and
\eqref{eq:forcing_noise_bound} imply
\[
\left\|
L_{n-1}\cdots L_{n-j}g_{n-j-1}
\right\|_{L^1}
\leq
(1-\alpha)^j
\sup_k\|g_k\|_{L^1},
\]
which proves the absolute and uniform convergence of
\eqref{eq:response_series_compact_noise}.
\end{proof}


\subsection{Reflected Euler--Maruyama schemes for dissipative nonautonomous SDEs}
\label{subsec:LR_reflected_EM}

In this section we show how Theorem~\ref{thm:LR_noise} can be applied to a compactly supported
Euler--Maruyama discretization of a dissipative time-dependent
stochastic differential equation. {The Euler--Maruyama scheme is one of the most used integration methods used in a large variety of applications. Its importance lies in the balance between computational efficiency and numerical accuracy. While it achieves modest strong convergence of order $1/2$
 and weak convergence of order $1$, its minimal per-step cost and straightforward implementation make it highly competitive for large-scale or high-dimensional stochastic systems \cite{KlPl92,Kloeden2011}.}

The construction will be used in
the the next subsection (\ref{subsec:LR_Ghil_Sellers}) for the finite-dimensional Ghil--Sellers model.

Let $\varepsilon\in[0,\varepsilon_0)$ and consider on
$\mathbb R^d$ the nonautonomous SDE
\begin{equation}
\label{eq:time_dependent_SDE}
dY_t^\varepsilon
=
b^\varepsilon(t,Y_t^\varepsilon)\,dt
+
\Sigma\,dW_t,
\end{equation}
where $W_t$ is a standard $d$-dimensional Brownian motion and
$\Sigma\in\mathbb R^{d\times d}$ is constant. We write
\[
A:=\Sigma\Sigma^{\mathsf T}
\]
and assume uniform ellipticity:
\begin{equation}
\label{eq:uniform_ellipticity_EM}
\lambda_-|\xi|^2
\leq
\langle A\xi,\xi\rangle
\leq
\lambda_+|\xi|^2
\qquad
\text{for every }\xi\in\mathbb R^d,
\end{equation}
for some constants $0<\lambda_-\leq\lambda_+<\infty$.

We impose the following uniform conditions on the drift. There exist
constants $L,B,c_0,c_1>0$, independent of $t$ and $\varepsilon$, such
that
\begin{align}
\label{eq:drift_global_Lipschitz}
|b^\varepsilon(t,x)-b^\varepsilon(t,y)|
&\leq
L|x-y|,
\\
\label{eq:drift_origin_bound}
|b^\varepsilon(t,0)|
&\leq
B,
\\
\label{eq:drift_dissipativity}
\langle b^\varepsilon(t,x),x\rangle
&\leq
c_0-c_1|x|^2
\end{align}
for every $t\in\mathbb R$, $x,y\in\mathbb R^d$, and
$|\varepsilon|<\varepsilon_0$. These assumptions guarantee the
global well-posedness of \eqref{eq:time_dependent_SDE} and a uniform
quadratic moment bound. They are also compatible with the
finite-dimensional Ghil--Sellers drift considered in Section \ref{subsec:LR_Ghil_Sellers}, after the
usual finite-dimensional reduction.

We assume that the parameter dependence of the drift is
differentiable locally uniformly in the state variable. More
precisely, there exist a measurable vector field
$\dot b:\mathbb R\times\mathbb R^d\to\mathbb R^d$ and remainders
$r^\varepsilon$ such that
\begin{equation}
\label{eq:drift_parameter_expansion}
b^\varepsilon(t,x)
=
b^0(t,x)
+
\varepsilon\dot b(t,x)
+
r^\varepsilon(t,x),
\end{equation}
and, for every $R>0$,
\begin{equation}
\label{eq:drift_parameter_uniformity}
\sup_{\substack{t\in\mathbb R\\ |x|\leq R}}
|\dot b(t,x)|
<\infty,
\qquad
\sup_{\substack{t\in\mathbb R\\ |x|\leq R}}
\frac{|r^\varepsilon(t,x)|}{|\varepsilon|}
\xrightarrow[\varepsilon\to0]{}0.
\end{equation}
Only these local uniform estimates will be needed after restricting
the numerical dynamics to a compact domain.

\paragraph{Euler--Maruyama dynamics on $\mathbb R^d$.}
Fix a time step $h>0$ and let
\[
t_k=t_0+kh,
\qquad
k\in\mathbb Z.
\]
The Euler--Maruyama discretization of
\eqref{eq:time_dependent_SDE} is
\begin{equation}
\label{eq:EM_unbounded}
Y_{k+1}^\varepsilon
=
Y_k^\varepsilon
+
h\,b^\varepsilon(t_k,Y_k^\varepsilon)
+
\sqrt h\,\Sigma\xi_k,
\end{equation}
where $(\xi_k)_{k\in\mathbb Z}$ are independent standard Gaussian
vectors in $\mathbb R^d$. Introducing the deterministic Euler maps
\begin{equation}
\label{eq:Euler_maps}
F_k^\varepsilon(x)
:=
x+h\,b^\varepsilon(t_k,x),
\end{equation}
we may write
\[
Y_{k+1}^\varepsilon
=
F_k^\varepsilon(Y_k^\varepsilon)
+
\sqrt h\,\Sigma\xi_k.
\]

The dissipativity assumption is inherited by the unbounded numerical
scheme for sufficiently small $h$. Indeed,
\begin{align*}
\mathbb E\left(
|Y_{k+1}^\varepsilon|^2
\,\middle|\,
Y_k^\varepsilon=x
\right)
&=
|x+h\,b^\varepsilon(t_k,x)|^2
+
h\,\operatorname{tr}(A)
\\
&\leq
\bigl(1-2c_1h+2L^2h^2\bigr)|x|^2
+
2c_0h
+
2B^2h^2
+
h\,\operatorname{tr}(A).
\end{align*}
Consequently, if $h>0$ is sufficiently small, there exists
$C_h<\infty$, independent of $k$ and $\varepsilon$, such that
\begin{equation}
\label{eq:EM_discrete_drift}
\mathbb E\left(
|Y_{k+1}^\varepsilon|^2
\,\middle|\,
Y_k^\varepsilon=x
\right)
\leq
(1-c_1h)|x|^2+C_hh.
\end{equation}
This estimate is not needed for the compact linear-response argument
below, but it records the stability inherited from the original
dissipative SDE and motivates the use of a large compact domain.

\paragraph{Reflected Euler--Maruyama dynamics on a cube.}
Let $I=[a,b]$ be a large closed interval and set
\[
X=I^d.
\]
Let $\mathcal R:\mathbb R^d\to I^d$ be the componentwise folding map
defined in \eqref{eq:folding_map_1d}. We consider the reflected
Euler--Maruyama chain
\begin{equation}
\label{eq:EM_reflected}
X_{k+1}^\varepsilon
=
\mathcal R\left(
F_k^\varepsilon(X_k^\varepsilon)
+
\sqrt h\,\Sigma\xi_k
\right).
\end{equation}
Thus the deterministic centre of the noise at $x\in I^d$ is
$F_k^\varepsilon(x)$, while the complete random step is returned to
$I^d$ by reflection.

Since $I^d$ is compact and the drift satisfies
\eqref{eq:drift_global_Lipschitz}--\eqref{eq:drift_origin_bound}, the
constant
\[
M_I
:=
\sup_{\substack{k\in\mathbb Z,\,
|\varepsilon|<\varepsilon_0\\x\in I^d}}
|b^\varepsilon(t_k,x)|
\]
is finite. Hence all deterministic centres belong to the compact set
\begin{equation}
\label{eq:EM_centres_compact_set}
\mathcal K_h
:=
I^d+\overline{B(0,hM_I)},
\qquad
F_k^\varepsilon(I^d)\subset\mathcal K_h.
\end{equation}
The set $\mathcal K_h$ is not a new phase space or an invariant set;
its  purpose is to contain uniformly all pre-noise Euler outputs, allowing this system to satisfy the assumption made in Section \ref{K} and then allowing the application of Theorem \ref{thm:LR_noise}.

The random vector $\sqrt h\,\Sigma\xi_k$ has Gaussian density
\(
q_h\in W^{1,1}
\)
and $q_h(z)>0$ for every $z\in\mathbb R^d$. Let
$k_{I,h}(z,\cdot)$ be the reflected Gaussian kernel obtained from
$q_h$ as in \eqref{eq:reflected_kernel_pushforward}

\begin{equation}
\label{eq:EM_kernel_lower_bound}
k_{I,h}\bigl(F_k^\varepsilon(x),y\bigr)
\geq
\alpha_h>0
\end{equation}
uniformly in $k\in\mathbb Z$, 
$\varepsilon \in [0,\varepsilon_0)$, $x\in I^d$, and for
$m$-a.e.\ $y\in I^d$.

The annealed transfer operator associated with
\eqref{eq:EM_reflected} is
\begin{equation}
\label{eq:EM_transfer_operator}
(L_k^\varepsilon\varphi)(y)
=
\int_{I^d}
\varphi(x)\,
k_{I,h}\bigl(F_k^\varepsilon(x),y\bigr)\,dm(x).
\end{equation}
By \eqref{eq:drift_parameter_expansion},
\begin{equation}
\label{eq:Euler_map_parameter_expansion}
F_k^\varepsilon(x)
=
F_k^0(x)
+
\varepsilon\dot F_k(x)
+
R_k^\varepsilon(x),
\end{equation}
where
\begin{equation}
\label{eq:Euler_map_derivative}
\dot F_k(x)
=
h\,\dot b(t_k,x),
\qquad
R_k^\varepsilon(x)
=
h\,r^\varepsilon(t_k,x).
\end{equation}
The local uniform assumptions
\eqref{eq:drift_parameter_uniformity}, applied on the compact set
$I^d$, imply
\begin{equation}
\label{eq:Euler_map_uniform_differentiability}
\sup_k\|\dot F_k\|_{L^\infty(I^d)}
<\infty,
\qquad
\sup_k
\frac{\|R_k^\varepsilon\|_{L^\infty(I^d)}}{|\varepsilon|}
\xrightarrow[\varepsilon\to0]{}0.
\end{equation}
Thus all the hypotheses of Theorem~\ref{thm:LR_noise} are satisfied and we can deduce

\begin{theorem}[Linear response for reflected Euler--Maruyama schemes]
\label{thm:LR_reflected_EM}
Assume
\eqref{eq:uniform_ellipticity_EM}--\eqref{eq:drift_parameter_uniformity}
and fix a time step $h>0$. Let
$\boldsymbol\mu^\varepsilon
=(\mu_k^\varepsilon)_{k\in\mathbb Z}$
be the equivariant family of densities associated with the reflected
Euler--Maruyama chain \eqref{eq:EM_reflected}. Then
\[
\boldsymbol\eta
=
\lim_{\varepsilon\to0}
\frac{
\boldsymbol\mu^\varepsilon-\boldsymbol\mu^0
}{\varepsilon}
\]
exists in
\[
\ell^\infty\bigl(\mathbb Z;L^1(I^d,m)\bigr).
\]

Let $L_k=L_k^0$, $\mu_k=\mu_k^0$, and define
$\dot L_k:L^1(I^d,m)\to L^1(I^d,m)$ by
\begin{equation}
\label{eq:EM_derivative_operator}
(\dot L_k\varphi)(y)
=
\int_{I^d}
\varphi(x)\,
D_1k_{I,h}\bigl(F_k^0(x),y\bigr)
\bigl[h\,\dot b(t_k,x)\bigr] \,dm(x),
\end{equation}
where $D_1$ denotes the Frechet derivative with respect to the first
variable, viewing
$z\mapsto k_{I,h}(z,\cdot)$ as an
$L^1(I^d,m)$-valued map. Setting
\[
g_k:=\dot L_k\mu_k,
\]
the response is given by
\begin{equation}
\label{eq:EM_response_series}
\eta_k
=
g_{k-1}
+
\sum_{j=1}^{\infty}
L_{k-1}L_{k-2}\cdots L_{k-j}\,
g_{k-j-1}.
\end{equation}
The series converges absolutely in $L^1(I^d,m)$, uniformly in $k$.
\end{theorem}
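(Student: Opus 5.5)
The plan is to reduce the statement to Theorem~\ref{thm:LR_noise} in the reflected-cube case, with $f_k^\varepsilon:=F_k^\varepsilon$ restricted to $X=I^d$, noise density $q:=q_h$ (the Gaussian density of $\sqrt h\,\Sigma\xi_k$), and kernel $k_I=k_{I,h}$. Theorem~\ref{thm:LR_noise} requires three ingredients. First, the images of all $f_k^\varepsilon$ must lie in a fixed compact set $\mathcal K\subset\mathbb R^d$. Second, the uniform positivity condition \eqref{eq:noise_positive_reflected} must hold on that compact. Third, the uniform differentiability \eqref{eq:fn_eps} must hold. Once these are in place, the existence of $\boldsymbol\eta$ in $\ell^\infty(\mathbb Z;L^1(I^d,m))$, the series \eqref{eq:EM_response_series} and its absolute uniform convergence follow verbatim from \eqref{eq:response_series_compact_noise}. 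The formula \eqref{eq:EM_derivative_operator} for $\dot L_k$ is then just \eqref{eq:dotL_noise} with $\dot f_k=\dot F_k=h\,\dot b(t_k,\cdot)$.

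\textbf{Step 1 (compactness of centres).} Using \eqref{eq:drift_global_Lipschitz}--\eqref{eq:drift_origin_bound}, for $x\in I^d$ we have $|b^\varepsilon(t_k,x)|\le B+L|x|\le B+L\sup_{I^d}|x|$. Hence $M_I<\infty$ uniformly in $k$ and $\varepsilon$, and $F_k^\varepsilon(I^d)\subset\mathcal K_h$ as in \eqref{eq:EM_centres_compact_set}. We take $\mathcal K:=\mathcal K_h$.

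\textbf{Step 2 (Doeblin lower bound).} By uniform ellipticity \eqref{eq:uniform_ellipticity_EM}, $\Sigma$ is invertible, so $q_h$ is a nondegenerate Gaussian density. It is therefore continuous, strictly positive, and lies in $W^{1,1}(\mathbb R^d)$, since $\nabla q_h$ is a Gaussian times a linear function. The set $\{y-z:\ y\in I^d,\ z\in\mathcal K_h\}$ is compact, so $q_h$ has a positive minimum $c_h$ there, and \eqref{eq:noise_positive_reflected} holds with $\alpha=\ell^d c_h$. This is the point to be careful about: pushforward under $\mathcal R$ only adds mass, because the identity branch of the folding is one of the preimages of each $y\in I^d$. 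Consequently $k_{I,h}(z,y)\ge \ell^d q_h(y-z)$ with respect to normalized $m$, which gives \eqref{eq:EM_kernel_lower_bound} and hence \eqref{eq:reflected_kernel_lower_bound}. I expect this step to be the main (mild) obstacle, namely making the density comparison for the folded pushforward precise. I would handle it by writing $k_{I,h}(z,y)=\ell^d\sum_{w\in\mathcal R^{-1}(y)}q_h(w-z)$ and keeping only the term $w=y$.

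\textbf{Step 3 (differentiability).} The expansion \eqref{eq:Euler_map_parameter_expansion}--\eqref{eq:Euler_map_derivative} is immediate from \eqref{eq:drift_parameter_expansion}. Applying \eqref{eq:drift_parameter_uniformity} with $R:=\sup_{x\in I^d}|x|$ gives \eqref{eq:Euler_map_uniform_differentiability}, which is exactly \eqref{eq:fn_eps} with $\dot f_k=\dot F_k$ and $r_k^\varepsilon=R_k^\varepsilon$. Measurability of $F_k^\varepsilon$ follows from continuity (Lipschitz in $x$).

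With Steps 1--3, all hypotheses of Theorem~\ref{thm:LR_noise} hold. The remaining conclusions follow from it: mass preservation, (A2) with $M_w=1$, and exponential loss of memory with $\rho=1-\alpha_h$ via Theorem~\ref{thm:doeblin_ELoM}. The unique equivariant family with $M_s=1$ comes from the pullback construction, and Theorem~\ref{thm:LR_strong} then yields the response limit and the causal series.
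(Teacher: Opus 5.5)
Your proposal is correct and takes the same route as the paper. Both reduce the statement to Theorem~\ref{thm:LR_noise} with $X=I^d$, $q=q_h$, $f_k^\varepsilon=F_k^\varepsilon$, checking $W^{1,1}$-regularity of the Gaussian density, the uniform Doeblin lower bound and the uniform differentiability \eqref{eq:Euler_map_uniform_differentiability}; your justification of the kernel lower bound by keeping only the identity branch of the fold (i.e.\ $\mathcal R_*\nu\ge\nu|_{I^d}$) fills in a step the paper only asserts in \eqref{eq:EM_kernel_lower_bound}.
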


\begin{proof}
The Gaussian density $q_h$ belongs to
$W^{1,1}(\mathbb R^d)$. The uniform positivity condition for the
reflected kernel follows from
\eqref{eq:EM_kernel_lower_bound}.
The differentiability condition for the deterministic maps follows
from
\eqref{eq:Euler_map_parameter_expansion}--\eqref{eq:Euler_map_uniform_differentiability}.
Therefore Theorem~\ref{thm:LR_noise} applies with
\[
X=I^d,
\qquad
q=q_h,
\qquad
f_k^\varepsilon=F_k^\varepsilon.
\]
It yields existence and uniqueness of the equivariant family,
differentiability in
$\ell^\infty(\mathbb Z;L^1(I^d,m))$, and the response formula
\eqref{eq:EM_response_series}.
\end{proof}

\begin{remark}
\label{rem:EM_reflection_interpretation}
The reflection in \eqref{eq:EM_reflected} is imposed at the level of
the discrete numerical scheme. Thus the system should be
viewed as a reflected Euler--Maruyama approximation of
\eqref{eq:time_dependent_SDE}, rather than as the Euler discretization
of a continuous reflected SDE. This is also a natural implementation
in numerical simulations when the state variables are required to
remain in a prescribed admissible interval.
\end{remark}

\section{Application to  the Ghil--Sellers model}
\label{subsec:LR_Ghil_Sellers}
{We now provide a physically-relevant example to illustrate some of the key aspects of our findings.} We apply Theorem~\ref{thm:LR_reflected_EM} to a finite-dimensional
stochastic discretization of the nonautonomous Ghil--Sellers energy
balance model. 
The model itself leads to a stochastic partial differential equation representing the evolution of the longitudinally-averaged surface temperature of the climate systems at different latitudes. 
Following what is usually done to perform numerical investigations of the evolution of the Ghil-Sellers model, we will discretize the latitude space, obtaining a finite dimensional reduction, which amounts to a system of time-dependent stochastic differential equations. {Note that the evolution of the temperature at different latitudes is coupled by the process of meridional heat diffusion.
We will also discretize the time by considering the formulation of the model obtained by applying the Euler-Maruyama scheme for implementing an actual numerical code that can be run in a computer.} From this we obtain a random dynamical system, which will be considered on a compact phase space of physically meaningful temperature fields. To this system we can apply our linear response theory.
We will consider time-dependent perturbations, representing the strength of the greenhouse effect.

The Ghil-Sellers energy balance model  (GSEBM) \cite{Sellers1969,Ghil1976} is one of the foundational models in climate science and has played a major role in characterising the multistability of the Earth system \cite{Bodai2015,Ghil2020}. The model describes the processes of energy absorption, emission, and energy redistribution across the latitudes of the climate system.
The  model
describes the evolution of the zonally averaged surface temperature
\[
T=T(t,x),
\]
where \(t\geq0\) denotes time and
\[
x=\frac{2\phi}{\pi}\in[-1,1]
\]
is the normalized latitude corresponding to the geographical latitude
\(\phi\in[-\pi/2,\pi/2]\). Thus \(x=0\) represents the equator, while
\(x=\pm1\) correspond to the two poles.

Introducing the geometric weight
\[
\omega(x):=\cos\left(\frac{\pi x}{2}\right),
\]
which accounts for the variation of the length of latitude circles, the
model can be written as
\begin{align}
\partial_tT(t,x)
&=
\frac{1}{c(x)}
\left(\frac{2}{\pi}\right)^2
\frac{1}{\omega(x)}
\partial_x\left(
\omega(x)k(x,T)\partial_xT
\right)
\nonumber\\
&\quad
+
\frac{1}{c(x)}
Q(t,x)\bigl(1-\alpha(x,T)\bigr)
\nonumber\\
&\quad
-
\frac{\sigma}{c(x)}
T^4
\left(
1-m(t)\tanh(c_3T^6)
\right)
+
\eta_0\eta(t,x).
\label{eq:GSEBM_PDE_application}
\end{align}

Here \(c(x)>0\) is the effective heat capacity per unit surface area of
the local atmosphere--land--ocean column. The first term on the
right-hand side represents meridional energy transport. The coefficient
\[
k(x,T)>0
\]
is the effective meridional diffusivity and incorporates the transport
of both sensible and latent heat. In the standard parametrization, its
temperature dependence accounts for the increase of atmospheric
moisture transport with temperature through the Clausius--Clapeyron
relation. The boundary conditions
\[
\partial_xT(t,-1)=\partial_xT(t,1)=0
\]
express the absence of meridional heat flux through the poles.

The second term describes the absorbed shortwave solar radiation. The
function
\[
Q(t,x)\geq0
\]
is the incoming solar irradiance at latitude \(x\), possibly including
seasonal, astronomical, volcanic, or aerosol-induced time dependence.
The function
\[
\alpha(x,T)\in[0,1]
\]
is the planetary albedo, namely the fraction of incoming radiation
reflected back to space. It generally decreases as the temperature
increases: cold, ice-covered surfaces have a large albedo, whereas
warmer ice-free surfaces have a smaller albedo. This temperature
dependence produces the ice--albedo feedback and is the principal
mechanism responsible for the coexistence of warm and snowball climate
states in the Ghil--Sellers model.

The third term represents outgoing longwave radiation. Here
\(\sigma>0\) is the Stefan--Boltzmann constant and \(T^4\) is the
black-body emission law. The factor
\[
1-m(t)\tanh(c_3T^6)
\]
modifies the effective emissivity of the climate system. The constant
\(c_3>0\) controls the temperature dependence of this correction, while
the parameter \(m(t)\) represents the intensity of the greenhouse
effect. Larger values of \(m(t)\) reduce the outgoing radiation and
therefore correspond to stronger greenhouse forcing. In particular, a
time-dependent \(m(t)\) may be used to represent a prescribed evolution
of atmospheric CO\(_2\) concentration or of an equivalent radiative
forcing.

Finally, \(\eta(t,x)\) represents unresolved fast atmospheric and
oceanic variability, introduced according to the stochastic
climate-modelling approach of Hasselmann, while \(\eta_0\geq0\)
determines its amplitude. In the finite-dimensional approximation
considered below, this term will give rise to an additive nondegenerate
Gaussian forcing.

Because both \(Q(t,x)\) and \(m(t)\) may depend explicitly on time, the
resulting dynamics is generally nonautonomous. One should therefore not
expect a single stationary probability measure, but rather a
time-dependent statistical state described by an equivariant, or
pullback, family of probability measures.

In the following we will consider
a class of suitable discretizations of this model, and  time-dependent perturbations of the greenhouse term $m(t)$.
We will establish a linear response result for this kind of perturbations.

\subsubsection{Finite-dimensional stochastic model}
Following what is usually done to numerically study the evolution of the Ghil-Sellers model,   we discretize the space, obtaining a system of stochastic differential equations.
We use a conservative finite-volume discretization of the latitude
variable. Let
\[
-1=x_{1/2}<x_{3/2}<\cdots<x_{N+1/2}=1
\]
be a partition of $[-1,1]$, let
\[
C_i=[x_{i-1/2},x_{i+1/2}],
\qquad
x_i\in C_i,
\]
and define the positive cell weights
\[
w_i:=\int_{C_i}\omega(x)\,dx,
\qquad
c_i:=c(x_i).
\]
The vector
\[
u=(u_1,\ldots,u_N)\in\mathbb R^N
\]
represents the discrete temperature profile, with
$u_i\approx T(t,x_i)$.

For $1\leq i\leq N-1$, define the discrete meridional heat flux
\begin{equation}
\label{eq:GS_discrete_flux}
J_{i+1/2}(u)
:=
\omega(x_{i+1/2})
k_{i+1/2}(u)
\frac{u_{i+1}-u_i}{x_{i+1}-x_i},
\end{equation}
where $k_{i+1/2}(u)>0$ is a consistent discretization of the
temperature-dependent diffusivity $k(x,T)$. The no-flux boundary
conditions are represented by
\[
J_{1/2}(u)=J_{N+1/2}(u)=0.
\]
The discrete diffusion term is
\begin{equation}
\label{eq:GS_discrete_diffusion}
(\mathcal D_N(u))_i
:=
\left(\frac{2}{\pi}\right)^2
\frac{
J_{i+1/2}(u)-J_{i-1/2}(u)
}{
c_iw_i
}.
\end{equation}
Consider the weighted scalar product
\[
\langle u,v\rangle_{c,w}
:=
\sum_{i=1}^Nc_iw_i u_iv_i.
\]

The conservative form of the discretization implies a discrete
integration-by-parts identity. Indeed, using
\eqref{eq:GS_discrete_diffusion} and the definition of the weighted
scalar product, we obtain
\begin{align*}
\langle u,\mathcal D_N(u)\rangle_{c,w}
&=
\left(\frac{2}{\pi}\right)^2
\sum_{i=1}^N
u_i\bigl(J_{i+1/2}(u)-J_{i-1/2}(u)\bigr).
\end{align*}
Since the boundary fluxes satisfy
\[
J_{1/2}(u)=J_{N+1/2}(u)=0,
\]
the sum telescopes, giving
\begin{align*}
\sum_{i=1}^N
u_i\bigl(J_{i+1/2}(u)-J_{i-1/2}(u)\bigr)
&=
-\sum_{i=1}^{N-1}
(u_{i+1}-u_i)J_{i+1/2}(u).
\end{align*}
Substituting the expression \eqref{eq:GS_discrete_flux} for the
intercell flux yields
\begin{align}
\langle u,\mathcal D_N(u)\rangle_{c,w}
&=
-
\left(\frac{2}{\pi}\right)^2
\sum_{i=1}^{N-1}
\omega(x_{i+1/2})k_{i+1/2}(u)
\frac{(u_{i+1}-u_i)^2}{x_{i+1}-x_i}
\nonumber\\
&\leq0.
\label{eq:GS_diffusion_dissipation}
\end{align}
Here the last inequality follows from
\[
\omega(x_{i+1/2})\geq0,
\qquad
k_{i+1/2}(u)>0,
\qquad
x_{i+1}-x_i>0.
\]
Thus the discrete meridional transport is dissipative in the natural
heat-capacity-weighted scalar product, exactly as in the continuous
model.

We set
\[
Q_i(t):=Q(t,x_i),
\qquad
\alpha_i(s):=\alpha(x_i,s).
\]

Since physical temperatures are expressed in Kelvin, the relevant
temperature interval is contained in $(0,\infty)$. In order to define
an auxiliary dissipative equation on the whole space $\mathbb R^N$,
we extend the Stefan--Boltzmann law by means of the signed power
\begin{equation}
\label{eq:GS_radiation_extension}
\mathcal E(s):=s|s|^3.
\end{equation}
Then
\[
\mathcal E(s)=s^4
\qquad\text{for }s\geq0,
\]
so this extension does not modify the physical outgoing-radiation law,
while
\[
s\mathcal E(s)=|s|^5
\qquad\text{for every }s\in\mathbb R.
\]
Consequently, the extended outgoing-radiation term remains coercive
also for the nonphysical negative-temperature states of the auxiliary
equation on $\mathbb R^N$.

We consider a family of greenhouse parameters
\begin{equation}
\label{eq:GS_CO2_perturbation}
m^\varepsilon(t)
=
m(t)
+
\varepsilon\dot m(t)
+
r_m^\varepsilon(t),
\end{equation}
where
\begin{equation}
\label{eq:GS_CO2_uniform_differentiability}
\sup_{t\in\mathbb R}|\dot m(t)|<\infty,
\qquad
\sup_{t\in\mathbb R}
\frac{|r_m^\varepsilon(t)|}{|\varepsilon|}
\xrightarrow[\varepsilon\to0]{}0.
\end{equation}
We assume that there exists $m_*<1$ such that
\begin{equation}
\label{eq:GS_greenhouse_bound}
0\leq m^\varepsilon(t)\leq m_*
\end{equation}
for every $t\in\mathbb R$ and every sufficiently small $\varepsilon$.

The finite-dimensional drift
$F_N^\varepsilon:\mathbb R\times\mathbb R^N\to\mathbb R^N$ is
defined componentwise by
\begin{align}
(F_N^\varepsilon(t,u))_i
&=
(\mathcal D_N(u))_i
+
\frac{Q_i(t)}{c_i}
\bigl(1-\alpha_i(u_i)\bigr)
\nonumber\\
&\quad
-
\frac{\sigma}{c_i}
\mathcal E(u_i)
\left(
1-m^\varepsilon(t)\tanh(c_3u_i^6)
\right).
\label{eq:GS_finite_dimensional_drift}
\end{align}
Let $\Sigma_N\in\mathbb R^{N\times N}$ be a constant noise matrix such
that
\begin{equation}
\label{eq:GS_uniform_ellipticity}
A_N:=\Sigma_N\Sigma_N^{\mathsf T}
\geq
\lambda_N I_N
\end{equation}
for some $\lambda_N>0$. The spatially discretized stochastic
Ghil--Sellers model is the nonautonomous SDE
\begin{equation}
\label{eq:GS_finite_dimensional_SDE}
dU_t^\varepsilon
=
F_N^\varepsilon(t,U_t^\varepsilon)\,dt
+
\Sigma_N\,dW_t.
\end{equation}

We assume that $Q_i$ is uniformly bounded in time, that
$0\leq\alpha_i\leq1$, and that the functions entering
$\mathcal D_N$ and $\alpha_i$ are locally Lipschitz. Then
$F_N^\varepsilon$ is locally Lipschitz in $u$, uniformly on bounded
sets with respect to $t$ and small $\varepsilon$. Moreover,
\eqref{eq:GS_diffusion_dissipation},
\eqref{eq:GS_radiation_extension}, and
\eqref{eq:GS_greenhouse_bound} imply a dissipativity estimate.

Indeed, the diffusion term is nonpositive in the weighted scalar
product, the absorbed-radiation term grows at most linearly after
pairing with $u$, and
\[
1-m^\varepsilon(t)\tanh(c_3u_i^6)
\geq
1-m_*>0.
\]
Therefore the coercive outgoing-radiation term dominates the bounded
solar forcing. There exist constants $C_N,\gamma_N>0$, independent of
$t$ and small $\varepsilon$, such that
\begin{equation}
\label{eq:GS_dissipativity}
\langle F_N^\varepsilon(t,u),u\rangle_{c,w}
\leq
C_N-\gamma_N\|u\|_{c,w}^2,
\qquad
\|u\|_{c,w}^2:=\langle u,u\rangle_{c,w}.
\end{equation}
In fact, the preceding argument gives a stronger coercive estimate of
order five. Thus $V_N(u)=1+\|u\|_{c,w}^2$ is a quadratic Lyapunov
function for the finite-dimensional diffusion. Equivalently, after
the linear change of variables $v=M_N^{1/2}u$, where
$M_N=\operatorname{diag}(c_1w_1,\ldots,c_Nw_N)$, one obtains the usual
Euclidean dissipativity condition. The local Lipschitz property and
\eqref{eq:GS_dissipativity} yield global well-posedness of
\eqref{eq:GS_finite_dimensional_SDE} and uniform quadratic moment
bounds.

If one wishes to impose exactly the global Lipschitz hypotheses used
in Subsection~\ref{subsec:LR_reflected_EM}, one may replace
$F_N^\varepsilon$, outside a sufficiently large cube, by a smooth
globally Lipschitz dissipative extension. This extension can be chosen
to agree with \eqref{eq:GS_finite_dimensional_drift} on the compact
temperature domain used below, and therefore does not alter the
reflected numerical model.

\subsubsection{Euler--Maruyama discretization and compact reflection}

Now, to restrict our random dynamical system to a compact domain of physically meaningful temperatures, we fix a bounded domain and consider reflecting boundary conditions.
Fix a time step $h>0$, set $t_k=t_0+kh$, and define the deterministic
Euler maps
\begin{equation}
\label{eq:GS_Euler_map}
\Phi_{k,N}^\varepsilon(u)
:=
u+hF_N^\varepsilon(t_k,u).
\end{equation}
The Euler--Maruyama discretization of
\eqref{eq:GS_finite_dimensional_SDE} on $\mathbb R^N$ is
\begin{equation}
\label{eq:GS_EM_unbounded}
U_{k+1}^\varepsilon
=
\Phi_{k,N}^\varepsilon(U_k^\varepsilon)
+
\sqrt h\,\Sigma_N\xi_k,
\end{equation}
where $(\xi_k)_{k\in\mathbb Z}$ are independent standard Gaussian
vectors in $\mathbb R^N$.

Let
\[
I_T=[T_-,T_+]
\]
be a large closed interval containing the physically relevant
temperature range, and set
\[
X_N:=I_T^N.
\]
Let
\[
\mathcal R_N:\mathbb R^N\to I_T^N
\]
be the componentwise folding map introduced in
\eqref{eq:folding_map_1d}. We consider the compact reflected dynamics
\begin{equation}
\label{eq:GS_EM_reflected}
X_{k+1}^{\varepsilon,N}
=
\mathcal R_N\left(
\Phi_{k,N}^\varepsilon(X_k^{\varepsilon,N})
+
\sqrt h\,\Sigma_N\xi_k
\right).
\end{equation}
This is a time-dependent random dynamical system on $I_T^N$. It is
also the natural compact numerical scheme obtained by applying an
Euler--Maruyama step and reflecting any temperature component that
leaves the prescribed admissible interval.

Since $I_T^N$ is compact and the drift is uniformly bounded there,
there exists $M_{N,I}<\infty$ such that
\[
\sup_{\substack{k\in\mathbb Z,\,
|\varepsilon|<\varepsilon_0\\
u\in I_T^N}}
|F_N^\varepsilon(t_k,u)|
\leq
M_{N,I}.
\]
Hence the deterministic pre-noise values belong to the compact set
\begin{equation}
\label{eq:GS_centres_set}
\mathcal K_{N,h}
:=
I_T^N+\overline{B(0,hM_{N,I})},
\qquad
\Phi_{k,N}^\varepsilon(I_T^N)
\subset\mathcal K_{N,h}.
\end{equation}
The Gaussian increment $\sqrt h\,\Sigma_N\xi_k$ has the strictly
positive density
\begin{equation}
\label{eq:GS_Gaussian_density}
q_{N,h}(z)
=
\frac{
\exp\left(
-\frac{1}{2h}
\langle A_N^{-1}z,z\rangle
\right)
}{
(2\pi h)^{N/2}\det(A_N)^{1/2}
}.
\end{equation}
Since $q_{N,h}\in W^{1,1}(\mathbb R^N)$ and is strictly positive, the
reflected transition kernel satisfies a uniform Doeblin minorization
on $I_T^N$. More precisely,
\begin{equation}
\label{eq:GS_Doeblin_constant}
\alpha_{N,h}
:=
|I_T|^N
\min_{\substack{z\in\mathcal K_{N,h}\\y\in I_T^N}}
q_{N,h}(y-z)
>0.
\end{equation}

\subsubsection{CO$_2$-response of the compact model}
Now we  check that the perturbations considered satisfy the assumptions needed to apply our general results.
The perturbation \eqref{eq:GS_CO2_perturbation} affects only the
outgoing-radiation term. Differentiating
\eqref{eq:GS_finite_dimensional_drift} at $\varepsilon=0$ gives
\begin{equation}
\label{eq:GS_drift_derivative}
(G_N(t,u))_i
:=
\left.
\partial_\varepsilon
(F_N^\varepsilon(t,u))_i
\right|_{\varepsilon=0}
=
\frac{\sigma}{c_i}
\dot m(t)\,
\mathcal E(u_i)
\tanh(c_3u_i^6).
\end{equation}
By \eqref{eq:GS_CO2_uniform_differentiability}, uniformly for
$u\in I_T^N$ and $k\in\mathbb Z$,
\begin{equation}
\label{eq:GS_Euler_parameter_expansion}
\Phi_{k,N}^\varepsilon(u)
=
\Phi_{k,N}^0(u)
+
\varepsilon hG_N(t_k,u)
+
R_{k,N}^\varepsilon(u),
\end{equation}
where
\begin{equation}
\label{eq:GS_Euler_remainder}
\sup_{k\in\mathbb Z}
\frac{
\|R_{k,N}^\varepsilon\|_{L^\infty(I_T^N)}
}{
|\varepsilon|
}
\xrightarrow[\varepsilon\to0]{}0.
\end{equation}
Thus the compact random maps \eqref{eq:GS_EM_reflected} satisfy the
uniform differentiability hypothesis of
Theorem~\ref{thm:LR_reflected_EM}.

Let $m_N$ denote normalized Lebesgue measure on $I_T^N$, and let
\[
L_{k,N}^\varepsilon:
L^1(I_T^N,m_N)\longrightarrow L^1(I_T^N,m_N)
\]
be the annealed transfer operator associated with
\eqref{eq:GS_EM_reflected}. Let
\[
\boldsymbol\mu^{\varepsilon,N}
=
(\mu_k^{\varepsilon,N})_{k\in\mathbb Z}
\]
be its unique equivariant family.
We are finally ready to establish the linear response for the nonautonomous dicretization of the Ghil-Sellers model we defined in the previous subsections. 
\begin{theorem}[Linear response of the discretized Ghil--Sellers model]
\label{thm:LR_compact_Ghil_Sellers}
Assume the regularity and boundedness conditions above,
\eqref{eq:GS_uniform_ellipticity},
\eqref{eq:GS_CO2_uniform_differentiability}, and
\eqref{eq:GS_greenhouse_bound}. Then the reflected Euler--Maruyama
discretization \eqref{eq:GS_EM_reflected} has linear response with
respect to the CO$_2$-type perturbation
\eqref{eq:GS_CO2_perturbation}. More precisely,
\[
\boldsymbol\eta^N
=
\lim_{\varepsilon\to0}
\frac{
\boldsymbol\mu^{\varepsilon,N}
-
\boldsymbol\mu^{0,N}
}{
\varepsilon
}
\]
exists in
\[
\ell^\infty\left(
\mathbb Z;
L^1(I_T^N,m_N)
\right).
\]

Let $L_{k,N}=L_{k,N}^0$ and
$\mu_k^N=\mu_k^{0,N}$. If $k_{I_T,N,h}$ denotes the reflected
Gaussian kernel generated by \eqref{eq:GS_Gaussian_density}, define
\begin{equation}
\label{eq:GS_derivative_transfer_operator}
(\dot L_{k,N}\varphi)(y)
=
\int_{I_T^N}
\varphi(u)\,
D_1k_{I_T,N,h}
\bigl(\Phi_{k,N}^0(u),y\bigr)
\bigl[hG_N(t_k,u)\bigr]
\,dm_N(u),
\end{equation}
where $D_1$ denotes the Fr\'echet derivative with respect to the first
variable, viewing $z\mapsto k_{I_T,N,h}(z,\cdot)$ as an
$L^1(I_T^N,m_N)$-valued map. Setting
\[
g_{k,N}:=\dot L_{k,N}\mu_k^N,
\]
the response is given by
\begin{equation}
\label{eq:GS_response_series}
\eta_{k,N}
=
g_{k-1,N}
+
\sum_{j=1}^{\infty}
L_{k-1,N}L_{k-2,N}\cdots L_{k-j,N}
g_{k-j-1,N}.
\end{equation}
The series converges absolutely in $L^1(I_T^N,m_N)$, uniformly in
$k$.
\end{theorem}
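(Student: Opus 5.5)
The plan is to reduce the statement directly to Theorem~\ref{thm:LR_reflected_EM}, or equivalently to Theorem~\ref{thm:LR_noise}. We take $d=N$, $I=I_T$, $X=I_T^N$, $\Sigma=\Sigma_N$, noise density $q=q_{N,h}$, and Euler maps $F_k^\varepsilon=\Phi^\varepsilon_{k,N}$. The hypotheses of Theorem~\ref{thm:LR_noise} are then checked one at a time.

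\emph{Step 1 (noise).} The density $q_{N,h}$ in \eqref{eq:GS_Gaussian_density} is a nondegenerate Gaussian, since $A_N$ is positive definite by \eqref{eq:GS_uniform_ellipticity}. Hence $q_{N,h}\in W^{1,1}(\mathbb R^N)$ and it is continuous and strictly positive.

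\emph{Step 2 (centres and minorization).} The pre-noise centres $\Phi_{k,N}^\varepsilon(u)$, for $u\in I_T^N$, lie in the compact set $\mathcal K_{N,h}$ of \eqref{eq:GS_centres_set}. This uses the uniform bound $M_{N,I}$ on $F_N^\varepsilon$ over $I_T^N$, which follows from four facts:
\begin{itemize}
\item $Q_i$ is bounded;
\item $0\le \alpha_i\le 1$;
\item $0\le m^\varepsilon\le m_*$;
\item $\mathcal D_N$ is continuous, hence bounded on the compact cube.
\end{itemize}
Continuity and positivity of $q_{N,h}$ on the compact set $\{y-z: y\in I_T^N,\ z\in\mathcal K_{N,h}\}$ give $\alpha_{N,h}>0$ in \eqref{eq:GS_Doeblin_constant}. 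Then \eqref{eq:noise_positive_reflected} holds. The folding map only pushes mass forward, and the density of $\mathcal R_*$ is at least that of the unfolded part lying in the cube, so we obtain \eqref{eq:reflected_kernel_lower_bound} uniformly in $k$ and in small $\varepsilon$.

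\emph{Step 3 (differentiability).} This is the only model-specific point and the one I expect to need care. The aim is \eqref{eq:fn_eps} with $\dot f_k=hG_N(t_k,\cdot)$. From \eqref{eq:GS_finite_dimensional_drift}, the parameter enters linearly through $m^\varepsilon$, so the remainder is exact:
\[
\Phi^\varepsilon_{k,N}(u)-\Phi^0_{k,N}(u)-\varepsilon hG_N(t_k,u)=h\,\frac{\sigma}{c_i}\,r_m^\varepsilon(t_k)\,\mathcal E(u_i)\tanh(c_3u_i^6)
\]
componentwise. On $I_T^N$ we have $|\mathcal E(u_i)\tanh(\cdot)|\le \max(|T_-|,|T_+|)^4$, and $c_i>0$. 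Therefore
\[
\sup_k\|R^\varepsilon_{k,N}\|_{L^\infty}\le C\,\sup_t|r_m^\varepsilon(t)|=o(\varepsilon)
\]
by \eqref{eq:GS_CO2_uniform_differentiability}. Similarly $\sup_k\|hG_N(t_k,\cdot)\|_{L^\infty(I_T^N)}\le C\sup_t|\dot m(t)|<\infty$. This gives \eqref{eq:GS_Euler_parameter_expansion}--\eqref{eq:GS_Euler_remainder}.

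The global Lipschitz and dissipativity hypotheses of Theorem~\ref{thm:LR_reflected_EM} are used there only to produce the bounded centre set. If one insists on them, replace $F_N^\varepsilon$ outside a large cube by a globally Lipschitz dissipative extension that agrees with it on $\mathcal K$-relevant points, as remarked earlier. This does not change the chain \eqref{eq:GS_EM_reflected}, because the drift is evaluated only on $I_T^N$. A cleaner route is to apply Theorem~\ref{thm:LR_noise} directly, since it needs only compact centres, positivity, $W^{1,1}$ noise, and \eqref{eq:fn_eps}.

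\emph{Step 4 (conclusion).} Theorem~\ref{thm:LR_noise} yields the following:
\begin{itemize}
\item existence and uniqueness of the equivariant family $\boldsymbol\mu^{\varepsilon,N}$;
\item convergence of the difference quotients in $\ell^\infty(\mathbb Z;L^1)$;
\item $\dot L_{k,N}$ given by \eqref{eq:dotL_noise}, which coincides with \eqref{eq:GS_derivative_transfer_operator} after substituting $f_k^0=\Phi^0_{k,N}$ and $\dot f_k=hG_N(t_k,\cdot)$;
\item the series \eqref{eq:GS_response_series}, converging geometrically with ratio $1-\alpha_{N,h}$, uniformly in $k$.
\end{itemize}
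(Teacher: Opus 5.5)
Your proposal is correct and follows the paper's proof. The paper likewise verifies the Doeblin minorization \eqref{eq:GS_Doeblin_constant}, the $W^{1,1}$ regularity of the Gaussian density, and the uniform expansion \eqref{eq:GS_Euler_parameter_expansion}--\eqref{eq:GS_Euler_remainder}, then invokes Theorem~\ref{thm:LR_reflected_EM}. Two of your additions sharpen the argument without changing the route: the explicit exact remainder $h\frac{\sigma}{c_i}r_m^\varepsilon(t_k)\mathcal E(u_i)\tanh(c_3u_i^6)$, and the direct appeal to Theorem~\ref{thm:LR_noise}, which avoids the global Lipschitz and dissipativity hypotheses that the paper handles only through its extension remark.
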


\begin{proof}
The reflected Gaussian kernel satisfies the uniform Doeblin condition
by \eqref{eq:GS_Doeblin_constant}. The Gaussian density belongs to
$W^{1,1}(\mathbb R^N)$, and the Euler maps satisfy the uniform
parameter expansion
\eqref{eq:GS_Euler_parameter_expansion}--\eqref{eq:GS_Euler_remainder}.
Therefore all the hypotheses of
Theorem~\ref{thm:LR_reflected_EM} are satisfied with
\[
d=N,
\qquad
I=I_T,
\qquad
b^\varepsilon=F_N^\varepsilon,
\qquad
\Sigma=\Sigma_N.
\]
The theorem yields existence and uniqueness of the equivariant family,
differentiability in
$\ell^\infty(\mathbb Z;L^1(I_T^N,m_N))$, and the causal response
formula \eqref{eq:GS_response_series}.
\end{proof}

As a consequence, every bounded observable
$\Psi:I_T^N\to\mathbb R$ has a linear response:
\begin{equation}
\label{eq:GS_observable_response}
\left.
\frac{d}{d\varepsilon}
\int_{I_T^N}
\Psi(u)\,\mu_k^{\varepsilon,N}(u)\,dm_N(u)
\right|_{\varepsilon=0}
=
\int_{I_T^N}
\Psi(u)\,\eta_{k,N}(u)\,dm_N(u).
\end{equation}
In particular, this applies to the discretized global mean
temperature
\[
\overline T_N(u)
:=
\frac{
\sum_{i=1}^Nw_i u_i
}{
\sum_{i=1}^Nw_i
},
\]
which is bounded on $I_T^N$. Thus
\eqref{eq:GS_observable_response} gives the first-order response of the
time-dependent expected global mean temperature to the prescribed
CO$_2$-modulation $\dot m(t)$.

\begin{remark}
\label{rem:GS_compact_model_interpretation}
The compactification is introduced at the level of the numerical
scheme. It does not identify high and low temperatures, as a torus
compactification would do, but reflects exceptional numerical
excursions at the boundary of a large physically admissible
temperature interval. This is a natural procedure in simulations and
allows the linear-response problem to be treated in the unweighted
space $L^1(I_T^N,m_N)$. No claim is made here concerning the limit as
$T_-\to-\infty$, $T_+\to+\infty$, $h\to0$, or $N\to\infty$.
\end{remark}

\section{Discussion and Conclusions}\label{conclusions}
In this work, we have shown that it is possible to develop a response theory for time-dependent systems, posing  rigorous foundations to the formal calculations and numerical results presented in \cite{Lucarini2026}.
Our framework is axiomatic. Our results are a first general step in the direction of providing a meaningful extension of response theory for nonautonomous systems, providing results that apply to random and deterministic sequential systems having a general time-dependence (which could be periodic or aperiodic) and in some sense extend previous findings on response theory for stochastic differential equations having time periodic coefficients \cite{Branicki2021}.

Our strategy of proof  revolves around requesting loss of memory for the reference system, which allows to define a unique equivariant measure supported on the pullback attractor, and defining a generalized transfer operator that acts on sequences of measures. 

The required conditions a) uniform regularity of equivariant measures; b) differentiable perturbation; and c) exponential loss of memory; somewhat resemble and adapt to the nonautonomous case the typical conditions used to obtain the linear response in the autonomous case.
 We discuss two explicit examples where our framework applies, namely the case of sequential expanding maps and the composition of random maps with large enough noise.
We believe that, in addition to the application examples presented in the article, the axiomatic framework also applies, for example, to the sequential composition of uniformly hyperbolic deterministic systems, by having the associated operators act on suitable anisotropic spaces (see \cite{liverani2006}).
Anisotropic Banach spaces are tailored to hyperbolic dynamics: they encode different regularity along expanding and contracting directions, and thus allow one to treat SRB-type equivariant states supported on (possibly fractal) pullback attractors within the same strong/weak operator framework used here. This answers  one of the key challenges mentioned in the introduction.

{The results we have obtained have a strong grounding in applications. We have been able to show that the discretized version of a dissipative stochastic differential equations obtained by applying the classic and extremely popular Euler-Maruyama fulfills the hypotheses behind our key theorem. This has led us to prove the existence of linear response results for the discretized - in time and space - version of the  Ghil--Sellers energy balance model, which is a foundational model of climate science.}



We conclude by highlighting two important aspects that deserve attention in separate publications. 
So far we have studied under which conditions it is possible to establish a response theory and predict the impact of small perturbations to the dynamics via explicit response operators. In \cite{LucariniChekroun2023,LucariniChekroun2024} it is explained that in the case of autonomous dynamics, the breakdown of response theory is intimately associated with the closure of the transfer operator gap in suitably defined spaces (in that case, $L^2$) and with the appearance of criticality associated with tipping behavior \cite{Lenton.tip.08,Lenton2012}. Indeed, the framework developed in this paper - or, more specifically, the study of the conditions under which the perturbative theory developed here fails -  might instead pave the way for understanding comprehensively more complex critical behaviour like that associated with the so-called rate-induced \cite{Ashwin2012,Panahi2023} and phase-induced  \cite{Alkhayuon2021} tipping, which are of great relevance in assessing, e.g., climate and ecosystems stability and resilience \cite{Abrams2025,Hastings2026}. 

Finally, a very attractive angle on response theory goes under the umbrella names of optimal response or  linear request. It is a bottom-up effort, whereby one asks what is the most efficient way (i.e., how to choose the cheapest, in some norm, perturbation), to achieve a desired change in the statistical properties  the system. A growing body of literature has been studying this \textit{statistical control} problem for the case of autonomous dynamics \cite{Antown2018,Antown2022,Gutierrez2025,FG2025,FroylandPhalempin2025OptimalLinearResponse,Galatolo_2025,DGJ,Galatolo_2017,Kloeckner2018LinearRequestProblem}. 

Given the clear theoretical as well as practical relevance of this approach to the problem of studying the response of a system to perturbations, it seems highly relevant to investigate whether these results can be extended to the case of non-autonomous reference dynamics.

\appendix 

\section{Uniform exponential loss of memory for composition of operators satisfying a common Lasota Yorke inequality. }\label{appendixa}

In this section, we show a relatively simple and general argument (see \cite{GF} or \cite{Galatolo2015} for similar ones) that
establishes  exponential  loss of memory for a sequential composition of
operators which are nearby { or slowly varying} in a mixed topology. 
{We first consider sequential compositions of operators which are all close, in
the mixed strong--weak topology, to a fixed reference operator \(L_0\) with
convergence to equilibrium. We then show that the same argument applies to
slowly varying families, provided the convergence-to-equilibrium assumption is
uniformly satisfied along the family.}

Let $B_{w}$ and $B_{s}$ be normed vector spaces of { signed
measures on $X$.} Suppose $(B_{s},||~||_{s})\subseteq $ $(B_{w},||~||_{w})$
and $||~||_{s}\geq ||~||_{w}$. Let us consider a sequence of bounded linear Markov
operators $\{L_{i}\}_{i\in \mathbb{N}}:B_{s}\rightarrow B_{s}.$ \ We will
suppose furthermore that the following assumptions are satisfied by the $%
L_{i}$:

\begin{itemize}
\item[$ML1$] The operators $L_{i }$ satisfy a common "one step" Lasota-Yorke
inequality. There are constants $B,\lambda _{1}\geq 0$ with $\lambda _{1}<1$, $B\geq 1$,
such that for all $f\in B_{s},$ $\mu \in P_{w},$ $i\in \mathbb{N}$%
\begin{equation}
\left\{ 
\begin{array}{c}
||L_{i}f||_{w}\leq ||f||_{w} \\ 
||L_{i}f||_{s}\leq \lambda _{1}||f||_{s}+B||f||_{w}.%
\end{array}%
\right.  \label{1}
\end{equation}

\item[$ML2$] There exists $M$ large enogh, such that $\lambda_1^{M}\leq \frac{1}{10(\frac{%
B}{1-\lambda_1}+1)}$ and

\begin{equation}
||L_{0}^{M}(v)||_{w}\leq \frac{1-\lambda_1}{10B} ||v||_{s}  \label{3}
\end{equation}%
for each $v\in V_s$, where%
\begin{equation*}
V_{s}=\{\mu \in B_{s}|\mu (X)=0\}.
\end{equation*}

\item[$ML3$] The family of operators is near to  $L_0$,  satisfying: $\forall i$, 
\begin{equation*}
||L_{i}-L_{0}||_{B_{s}\rightarrow B_{w}}\leq \frac{7(1-\lambda_1 )^2}{10M B({%
\frac1{1-\lambda_1}+B})}.
\end{equation*}
\end{itemize}

We remark that the assumption $(ML1)$ implies that the family of operators $%
L_{i}$ is uniformly bounded when acting on $B_{s}$ and on $B_{w}.$
Furthermore, by the Markov property,  $\forall i,$ $L_{i }(V_{s})\subseteq V_{s}$.

First, we state a Lasota-Yorke inequality for a sequential composition
of operators satisfying $(ML1)$. The statement directly follows from the definitions.

\begin{lemma}
\label{lasotaY copy(1)}Let $L_{i}$ be a family of Markov operators \
satisfying $(ML1)$ and let 
\begin{equation}
L^{(j,j+n-1)}:=L_{j+n-1}\circ  ...\circ ~L_{j}  \label{Ln}
\end{equation}%
be a sequential composition of operators in such family, then $\forall n,j$%
\begin{equation}
||L^{(j,j+n-1)}f\Vert _{w}\leq ||f\Vert _{w}
\end{equation}%
and%
\begin{equation}
||L^{(j,j+n-1)}f\Vert _{s}\leq \lambda _{1}^{n}\Vert f\Vert _{s}+\frac{B}{%
1-\lambda _{1}}\Vert f\Vert _{w}.  \label{lyw}
\end{equation}
\end{lemma}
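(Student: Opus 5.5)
The plan is an induction on $n$, with $j$ fixed, using only the two inequalities in $(ML1)$. The weak bound is immediate. Each $L_i$ is a weak contraction, $\|L_i f\|_w\le\|f\|_w$, so composing $n$ of them gives $\|L^{(j,j+n-1)}f\|_w\le\|f\|_w$. The same holds for every partial composition $L^{(j,j+k-1)}$ with $k\le n$, and we will use this in the second step.

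For the strong bound we prove by induction the slightly sharper statement
\[
\|L^{(j,j+n-1)}f\|_s\le\lambda_1^n\|f\|_s+B\Bigl(\sum_{k=0}^{n-1}\lambda_1^k\Bigr)\|f\|_w .
\]
The case $n=1$ is exactly the second inequality of $(ML1)$. For the inductive step, write $L^{(j,j+n)}f=L_{j+n}\bigl(L^{(j,j+n-1)}f\bigr)$. Applying $(ML1)$ to $L_{j+n}$ gives
\[
\|L^{(j,j+n)}f\|_s\le\lambda_1\|L^{(j,j+n-1)}f\|_s+B\|L^{(j,j+n-1)}f\|_w .
\]
Now bound the first term by the inductive hypothesis and the second by the weak contraction from the first step. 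This yields
\[
\lambda_1^{n+1}\|f\|_s+B\Bigl(\lambda_1\sum_{k=0}^{n-1}\lambda_1^k+1\Bigr)\|f\|_w=\lambda_1^{n+1}\|f\|_s+B\Bigl(\sum_{k=0}^{n}\lambda_1^k\Bigr)\|f\|_w ,
\]
which closes the induction.

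Finally, since $0\le\lambda_1<1$, the geometric sum satisfies $\sum_{k=0}^{n-1}\lambda_1^k\le\frac{1}{1-\lambda_1}$, and this gives \eqref{lyw}. Nothing here uses that the operators are equal or close to $L_0$. Only the uniformity of the constants $\lambda_1,B$ in $i$ matters, together with the fact that the weak contraction holds at every intermediate step. The only point requiring care is the bookkeeping of indices in the composition order $L_{j+n-1}\circ\cdots\circ L_j$, so that the newest operator is applied last. There is no genuine obstacle.
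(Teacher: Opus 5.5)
Your induction is correct, and it is exactly the routine argument the paper has in mind: the paper gives no written proof and only remarks that the statement follows directly from the definitions, that is, from iterating the one-step inequality of $(ML1)$ together with the weak contraction and bounding the geometric sum by $\frac{1}{1-\lambda_1}$. Nothing needs to be added.
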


%

The following lemma is an estimate for the distance of the sequential
composition of operators from the iterations of $L_0$.

\begin{lemma}
\label{XXX}Let $\delta \geq 0$ and let $L^{(j,j+n-1)}$ be a sequential
composition of operators $\{L_{i}\}_{i\in \mathbb{N}}$ as in $(\ref{Ln})$
that satisfies the above assumptions. Let $L_0$ as above such that $%
||L_{i}-L_0||_{s\rightarrow w}\leq \delta .$\ Then $\forall g\in
B_{s},\forall j,n\geq 1$%
\begin{equation}
||L^{(j,j+n-1)}g-L_0^{n}g||_{w}\leq \delta (\frac{1}{1-\lambda }||g||_{s}+n%
\frac{B}{1-\lambda }||g||_{w}).  \label{2}
\end{equation}%
where $B$ is the second coefficient of the Lasota-Yorke inequality $($\ref{1}%
$)$.
\end{lemma}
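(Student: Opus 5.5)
The plan is a telescoping decomposition of the difference between the sequential composition and the iterate of $L_0$, estimating each term with the mixed-norm closeness and the Lasota--Yorke bounds of Lemma~\ref{lasotaY copy(1)}. The $\lambda$ in the statement will be read as the Lasota--Yorke rate $\lambda_1$. I will write
\[
L^{(j,j+n-1)}-L_0^{n}=\sum_{k=0}^{n-1} L_0^{\,n-1-k}\,(L_{j+k}-L_0)\,L^{(j,j+k-1)},
\]
with the convention $L^{(j,j-1)}=I$. This identity follows by inserting and subtracting $L_0^{n-1-k}L^{(j,j+k)}$ for $k=0,\dots,n-1$, exactly as in the telescoping of Step 1 of Lemma~\ref{lem:resolvent_continuity_mixed}. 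The ordering is chosen deliberately: the unperturbed operator acts after the difference and the sequential composition acts before it. This ordering is the key choice. The difference $L_{j+k}-L_0$ has to be fed a strong-norm input, and that input is then the sequential iterate, whose strong norm is controlled by (ML1).

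For each term I will use the weak contraction $\|L_0 f\|_w\le\|f\|_w$, which holds because $L_0$ itself satisfies (ML1) as the reference operator. This gives
\[
\|L_0^{\,n-1-k}(L_{j+k}-L_0)L^{(j,j+k-1)}g\|_w\le \delta\,\|L^{(j,j+k-1)}g\|_s .
\]
Then Lemma~\ref{lasotaY copy(1)} bounds $\|L^{(j,j+k-1)}g\|_s\le \lambda_1^{k}\|g\|_s+\frac{B}{1-\lambda_1}\|g\|_w$, and for $k=0$ the bound $\|g\|_s$ is trivially compatible. Summing over $k=0,\dots,n-1$, the geometric part gives $\sum_k\lambda_1^k\le \frac{1}{1-\lambda_1}$, and the weak part contributes at most $n\frac{B}{1-\lambda_1}\|g\|_w$. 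Together these yield exactly \eqref{2}.

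The only point requiring care is the order of the factors in the telescoping sum. With the opposite ordering, $\sum_k L^{(\cdot)}(L_{\cdot}-L_0)L_0^k$, one would instead need weak contractivity of the $L_i$ and Lasota--Yorke bounds for powers of $L_0$. That version also works under (ML1) and gives the same estimate, so either choice is acceptable. I will keep the first one because it matches the constants in \eqref{2} directly. No genuine obstacle is expected: the statement is a bookkeeping lemma, and the hard work (the decay from (ML2) and the smallness from (ML3)) enters only later, when this estimate is iterated in blocks of length $M$ to produce the exponential loss of memory.
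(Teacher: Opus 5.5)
Your proof is correct, but your telescoping runs in the mirror direction to the paper's. The paper argues by induction on $n$, peeling off the last factor:
\[
L_{j+n-1}L^{(j,j+n-2)}g-L_0^{n}g=L_{j+n-1}\bigl(L^{(j,j+n-2)}g-L_0^{n-1}g\bigr)+(L_{j+n-1}-L_0)L_0^{n-1}g .
\]
Unrolled, this is exactly the ordering $\sum_{k=0}^{n-1}L^{(j+k+1,j+n-1)}(L_{j+k}-L_0)L_0^{k}$ that you set aside. It uses weak contractivity of the $L_i$ and the Lasota--Yorke inequality for the iterates $L_0^{n-1}$, and it accumulates the constants $C_n=C_{n-1}+\lambda_1^{n-1}$ into a geometric series.

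Your ordering swaps these roles. From $L_0$ you only need $\|L_0 f\|_w\le\|f\|_w$, and the strong-norm control comes from the sequential bound \eqref{lyw} for the actual family, which (ML1) provides directly. So your version asks less of the reference operator: it needs no Lasota--Yorke inequality for $L_0$, only weak contraction, which is automatic e.g.\ for a Markov operator when $B_w=L^1$. It also gives the constant in closed form without an induction. The paper's version is the more perturbative one: it needs regularity only of the single unperturbed operator and mere weak contractivity of the perturbed ones.

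In Lemma~\ref{losmem} both sets of hypotheses are available anyway, so the two routes are interchangeable there. They yield the same estimate; both in fact give $(n-1)$ rather than $n$ in front of $\frac{B}{1-\lambda_1}\|g\|_w$.
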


\begin{proof}
\ By the assumptions we get%
\begin{equation*}
||L_0g-L_{j}g||_{w}\leq \delta ||g||_{s}
\end{equation*}

hence the case $n=1$ of $(\ref{2})$ is trivial. \ Let us now suppose
inductively%
\begin{equation*}
||L^{(j,j+n-2)}g-L_{0}^{n-1}g||_{w}\leq \delta (C_{n-1}||g||_{s}+(n-1)\frac{B%
}{1-\lambda _{1}}||g||_{w})
\end{equation*}%
then%
\begin{eqnarray*}
||L_{j+n-1}L^{(j,j+n-2)}g-L_{0}^{n}g||_{w} &\leq
&||L_{j+n-1}L^{(j,j+n-2)}g-L_{j+n-1}L_{0}^{n-1}g+L_{j+n-1}L_{0}^{n-1}g-L_{0}^{n}g||_{w}
\\
&\leq
&||L_{j+n-1}L^{(j,j+n-2)}g-L_{j+n-1}L_{0}^{n-1}g||_{w}+||L_{j+n-1}L_{0}^{n-1}g-L_{0}^{n}g||_{w}
\\
&\leq &\delta (C_{n-1}||g||_{s}+(n-1)\frac{B}{1-\lambda _{1}}%
||g||_{w})+||[L_{j+n-1}-L_{0}](L_{0}^{n-1}g)||_{w} \\
&\leq &\delta (C_{n-1}||g||_{s}+(n-1)\frac{B}{1-\lambda _{1}}%
||g||_{w})+\delta ||L_{0}^{n-1}g||_{s} \\
&\leq &\delta (C_{n-1}||g||_{s}+(n-1)\frac{B}{1-\lambda _{1}}||g||_{w}) \\
&&+\delta (\lambda _{1}^{n-1}||g||_{s}+\frac{B}{1-\lambda _{1}}||g||_{w}) \\
&\leq &\delta \lbrack (C_{n-1}+\lambda _{1}^{n-1})||g||_{s})+n\frac{B}{%
1-\lambda _{1}}||g||_{w}].
\end{eqnarray*}

The statement follows from the observation that, continuing the composition, 
$C_{n}$ remains bounded by the sum of a geometric series.
\end{proof}

\bigskip

\bigskip

\begin{lemma}
\label{losmem} Let $L_{i}$ be a sequence of operators satisfying $%
(ML1),...,(ML3)$. 

Then the sequence $L_{i}$ has a strong exponential loss of memory in the
following sense. There are $C,\lambda\geq 0$ such that $\forall j,n\in 
\mathbb{N}$, $g\in V_{s}$ 
\begin{equation*}
||L^{(j,j+n-1)}g||_{s}\leq Ce^{-\lambda n}||g||_{s}.
\end{equation*}
\end{lemma}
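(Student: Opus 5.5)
The plan is to prove a one-block contraction and then iterate it. Fix $M$ as in $(ML2)$, let $\delta$ be the bound in $(ML3)$, and consider a block of $2M$ consecutive operators. For $g\in V_s$ and any $j$ the target is
\[
\|L^{(j,j+2M-1)}g\|_s\le \tfrac12\|g\|_s .
\]
Once this holds uniformly in $j$, the general statement follows by splitting $n=2Mq+r$ with $0\le r<2M$. The $q$ full blocks are applied one after another; this is legitimate because each $L_i$ maps $V_s$ into $V_s$ by the Markov property. The leftover $r$ steps are controlled by Lemma~\ref{lasotaY copy(1)}, which gives $\|L^{(\cdot)}f\|_s\le(1+\frac{B}{1-\lambda_1})\|f\|_s$. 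This produces $C=2(1+\frac{B}{1-\lambda_1})$ and $\lambda=\frac{\log 2}{2M}$.

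The block estimate has two halves.

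\emph{First $M$ steps: weak smallness.} Write $L^{(j,j+M-1)}g=L_0^Mg+(L^{(j,j+M-1)}g-L_0^Mg)$. The first term is bounded by $(ML2)$:
\[
\|L_0^Mg\|_w\le\frac{1-\lambda_1}{10B}\|g\|_s .
\]
The second is bounded by Lemma~\ref{XXX} with $n=M$, using $\|g\|_w\le\|g\|_s$:
\[
\|L^{(j,j+M-1)}g-L_0^Mg\|_w\le \delta\Big(\frac{1}{1-\lambda_1}+\frac{MB}{1-\lambda_1}\Big)\|g\|_s .
\]
The constant in $(ML3)$ is designed so that this is at most a fixed fraction, roughly $\frac{7(1-\lambda_1)}{10B}$ up to the harmless factors. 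Together these give $\|L^{(j,j+M-1)}g\|_w\le c_w\|g\|_s$ with $c_w$ of order $\frac{1-\lambda_1}{B}$ times a number less than one. Lemma~\ref{lasotaY copy(1)} also gives the crude strong bound $\|L^{(j,j+M-1)}g\|_s\le(1+\frac{B}{1-\lambda_1})\|g\|_s$.

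\emph{Second $M$ steps: convert back to strong.} Set $f=L^{(j,j+M-1)}g$ and apply Lemma~\ref{lasotaY copy(1)} to the next $M$ operators:
\[
\|L^{(j+M,j+2M-1)}f\|_s\le\lambda_1^M\|f\|_s+\frac{B}{1-\lambda_1}\|f\|_w .
\]
The first term is at most $\lambda_1^M(1+\frac{B}{1-\lambda_1})\|g\|_s\le\frac1{10}\|g\|_s$ by the choice of $M$ in $(ML2)$. The second is at most $\frac{B}{1-\lambda_1}c_w\|g\|_s$, and the factor $\frac{B}{1-\lambda_1}$ cancels the $\frac{1-\lambda_1}{B}$ in $c_w$, leaving a numerical constant. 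The $(ML2)$ contribution gives $\frac1{10}$. The $(ML3)$ contribution should come out around $\frac{7}{10}\cdot\frac{1+MB}{M(1+B(1-\lambda_1))}$ or similar, and has to be checked to be below the remaining margin.

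The main obstacle is this constant bookkeeping: showing that $(ML3)$ really gives a total strictly below $1$, say $\le\frac{9}{10}$, uniformly. The term $n\frac{B}{1-\lambda_1}\|g\|_w$ in Lemma~\ref{XXX} carries a factor $M$, and the $M$ in the denominator of $(ML3)$ should absorb it. If the factors do not close exactly, the first fix is to use a block of length $kM$ in place of $2M$. A longer final Lasota--Yorke stretch makes $\lambda_1^{kM}$ arbitrarily small and provides slack. Any contraction factor $\theta<1$, rather than $\frac12$, suffices, giving $\lambda=-\log\theta/(2M)$.
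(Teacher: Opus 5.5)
Your plan is the paper's proof. The first $M$ steps of a $2M$-block make $\|L^{(j,j+M-1)}g\|_w$ small via $(ML2)$, Lemma~\ref{XXX} and $(ML3)$. The next $M$ steps turn this into a strong contraction through the sequential Lasota--Yorke inequality, and the blocks are then iterated. Your explicit handling of the remainder $r<2M$ is more careful than the paper, which only records the bound along multiples of $2M$.

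The one thing you leave open is exactly the place where the specific constant in $(ML3)$ is used. Your guessed value of that contribution drops a factor $1-\lambda_1$. The correct value is $\frac{7}{10}\cdot\frac{(1-\lambda_1)(1+MB)}{M(1+B(1-\lambda_1))}$, and the missing factor is what makes the estimate close. Let $\delta$ be the $(ML3)$ bound and use $\|g\|_w\le\|g\|_s$. Lemma~\ref{XXX} at $n=M$ then gives
\[
\delta\,\frac{1+MB}{1-\lambda_1}
=\frac{7(1-\lambda_1)^2}{10MB}\cdot\frac{1+MB}{1+B(1-\lambda_1)}
\le\frac{7(1-\lambda_1)}{10B}.
\]
The inequality holds because $(1-\lambda_1)(1+MB)\le M\bigl(1+B(1-\lambda_1)\bigr)$ is equivalent to $1-\lambda_1\le M$. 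The paper phrases this as the bound $\frac{7i(1-\lambda_1)}{10MB}\|g\|_s$ after $i\ge 1$ steps.

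Multiply by $\frac{B}{1-\lambda_1}$. The three terms in the second half are then $\frac1{10}$ (from $\lambda_1^M$), $\frac1{10}$ (from $(ML2)$) and $\frac7{10}$ (from $(ML3)$). So the block factor is $\theta=\frac9{10}$, not $\frac12$. Your constants should therefore be $C=\frac{10}{9}\bigl(1+\frac{B}{1-\lambda_1}\bigr)$ and $\lambda=\frac{\log(10/9)}{2M}$.

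Your proposed fallback would not have rescued a failure at this point. Lengthening the final Lasota--Yorke stretch only shrinks the $\lambda_1^{kM}$ term. The coefficient $\frac{B}{1-\lambda_1}$ in front of the weak norm does not improve, and neither do the $(ML2)$ and $(ML3)$ contributions. Closure comes from the inequality above, that is, from how the $(ML3)$ constant was tuned, not from extra slack.
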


\begin{proof}
Remark that because of the Lasota-Yorke inequality, $\forall j,i\geq 1,g\in
B_s $ 
\begin{equation}\label{sop24}
||L^{(j,j+i)}(g)||_s\leq (\frac{B}{1-\lambda_1}+1)||g||_s.
\end{equation}
Now by $(ML2)$ let us consider $M$ such that $\lambda_1^{M}\leq \frac{1}{10(%
\frac{B}{1-\lambda_1}+1)}$ and , $M$ such that $\forall i\geq M,g\in V_s$ 
\begin{equation*}
||{L_0}^{M}g||_{w}\leq \frac{1-\lambda_1}{10B}||g||_s.
\end{equation*}

Since 
\begin{equation*}
||L_{i}-L_0||_{s\rightarrow w}\leq \frac{7(1-\lambda_1 )^2}{10M B({\frac{1}{%
1-\lambda_1 }+B})}
\end{equation*}
for all $i $. By $(\ref{2})$, $\forall j\geq M, i\geq M$ 
\begin{equation*}
\begin{split}
||L^{(j,j+i-1)}g-{L_0 }^{i}g||_{w} & \leq \frac{7(1-\lambda_1 )^2}{10MB(\frac{1%
}{1-\lambda_1 }+B)}(\frac{1}{1-\lambda_1 }||g||_s+i\frac{B||g||_w}{%
(1-\lambda_1 )}) \\
& \leq \frac{7i(1-\lambda_1 )}{10MB}||g ||_s.
\end{split}%
\end{equation*}
Hence 
\begin{equation}
\begin{split}
||L^{(j,j+M-1)}g||_w &\leq ||{L_0 }^{M}g||_w+ \frac{7M (1-\lambda_1 )}{10MB}||g ||_s \\
&\leq \frac{1-\lambda_1}{10B}||g||_s + \frac{7M(1-\lambda_1 )}{10MB}||g ||_s.
\end{split}%
\end{equation}

Applying now the Lasota-Yorke inequality and the estimate \eqref{sop24} we get, 
\begin{equation}\label{222}
\begin{split}
||L^{(j,j+2M-1)}g\Vert _{s} & \leq \lambda
_{1}^{M}||L^{(j,j+M-1)}g||_s+\frac{B}{1-\lambda_1}\Vert L^{(j,j+M-1)}g\Vert _{w} \\
&\leq \frac{1}{10}||g||_s+\frac{B}{1-\lambda_1} \frac{1-\lambda_1}{10B}||g||_s+\frac{7BM (1-\lambda_1 )}{(1-\lambda_1)10MB}||g ||_s \\
&\leq\frac{9}{10}||g||_s
\end{split}%
\end{equation}
and 
\begin{equation*}
||L^{(j,j+2kM-1)}g\Vert _{s}\leq(\frac{9}{10})^k||g||_s
\end{equation*}
for each $j\geq N_1$ and $k\geq 1$, $g\in V_s$ establishing the result.
\end{proof}

{
\bigskip

We now formalize a consequence of the proof of Lemma~\ref{losmem}, which is
useful when the reference operator is not fixed, but changes slowly along the
sequence. The idea is that the proof of Lemma~\ref{losmem} is local in time:
on a block of length \(M\), the sequence only has to be close to a single
``frozen'' operator, namely the operator at the beginning of the block. The assumptions we will take and the notion of "slow enough change" will be explicitly verifiable in concrete example.

We hence formalize below the slightly different assumptions to be taken in this case. We will assume again the common Lasota--Yorke inequality \((ML1)\). We replace the
fixed convergence-to-equilibrium assumption \((ML2)\) by the following
uniform "frozen" version.

\begin{itemize}
\item[$(ML2')$] There exists \(M\geq 1\) such that
\[
\lambda_1^M
\left(
\frac{B}{1-\lambda_1}+1
\right)
\leq \frac{1}{10},
\]
and, for every \(j\in\mathbb N\) and every \(v\in V_s\),
\[
\|L_j^M v\|_w
\leq
\frac{1-\lambda_1}{10B}\|v\|_s .
\]
\end{itemize}

This means that each frozen operator \(L_j\) has an \(M\)-step convergence to
equilibrium estimate, with constants independent of \(j\).

We also assume that the sequence is slowly varying on blocks of length \(M\).

\begin{itemize}
\item[$(ML3')$] For every \(j\in\mathbb N\) and every
\(\ell=0,\ldots,M-1\),
\[
\|L_{j+\ell}-L_j\|_{B_s\to B_w}
\leq
\delta_M,
\]
where \(\delta_M>0\) is small enough so that
\[
\delta_M
\left(
\frac{1}{1-\lambda_1}
+
\frac{MB}{1-\lambda_1}
\right)
\leq
\frac{7(1-\lambda_1)}{10B}.
\]
\end{itemize}

Equivalently, if one assumes the stepwise slow variation estimate
\[
\|L_{n+1}-L_n\|_{B_s\to B_w}\leq \eta
\qquad \forall n,
\]
then \((ML3')\) follows whenever \(M\eta\leq \delta_M\).

By the same proof of Lemma \ref{losmem} (see \eqref{222}) one can obtain in this slightly different setting the following

\begin{lemma}[Block contraction for slowly varying compositions]
\label{lem:block-contraction-slow}
Assume \((ML1)\), \((ML2')\), and \((ML3')\). Then, for every
\(j\in\mathbb N\) and every \(g\in V_s\),
\[
\|L^{(j,j+2M-1)}g\|_s
\leq
\frac{9}{10}\|g\|_s .
\]
\end{lemma}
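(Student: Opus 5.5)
The plan is to repeat the proof of Lemma~\ref{losmem} block by block, with the fixed reference operator $L_0$ replaced by the frozen operator $L_j$ at the start of the block. Fix $j$ and $g\in V_s$.

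\emph{Step 1: comparison over one block.} We compare the sequential composition $L^{(j,j+M-1)}$ with the frozen iterate $L_j^M$ in the weak norm. We rerun the telescoping induction of Lemma~\ref{XXX}, now with reference operator $L_j$ and with $\delta=\delta_M$. Only the operators $L_{j+\ell}$ with $0\le \ell\le M-1$ enter, and $(ML3')$ controls each of them. Each step uses $(ML1)$: weak contraction for the outer operator, and the strong bound $\|L_j^{\ell}g\|_s\le \lambda_1^\ell\|g\|_s+\frac{B}{1-\lambda_1}\|g\|_w$ from Lemma~\ref{lasotaY copy(1)}, which applies since $L_j$ itself satisfies $(ML1)$. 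This gives
\[
\|L^{(j,j+M-1)}g-L_j^Mg\|_w\le \delta_M\Bigl(\frac{1}{1-\lambda_1}+\frac{MB}{1-\lambda_1}\Bigr)\|g\|_s\le \frac{7(1-\lambda_1)}{10B}\|g\|_s ,
\]
where we used $\|g\|_w\le\|g\|_s$ and the numerical condition in $(ML3')$.

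\emph{Step 2: weak smallness after $M$ steps.} Combining Step 1 with $(ML2')$ at index $j$ yields
\[
\|L^{(j,j+M-1)}g\|_w\le \Bigl(\frac{1-\lambda_1}{10B}+\frac{7(1-\lambda_1)}{10B}\Bigr)\|g\|_s .
\]

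\emph{Step 3: strong contraction over $2M$ steps.} Write $L^{(j,j+2M-1)}=L^{(j+M,j+2M-1)}L^{(j,j+M-1)}$. Apply the sequential Lasota--Yorke inequality \eqref{lyw} to the outer factor, and bound $\|L^{(j,j+M-1)}g\|_s\le(\frac{B}{1-\lambda_1}+1)\|g\|_s$ by \eqref{sop24}. Then $(ML2')$ gives $\lambda_1^M(\frac{B}{1-\lambda_1}+1)\le\frac1{10}$, and the weak term contributes at most $\frac{B}{1-\lambda_1}\cdot\frac{8(1-\lambda_1)}{10B}=\frac{8}{10}$. Adding the two contributions gives $\frac{9}{10}\|g\|_s$, exactly as in \eqref{222}.

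The main obstacle is Step 1. We must make sure the inductive constant in Lemma~\ref{XXX} stays bounded by $\frac{1}{1-\lambda_1}$ and $\frac{MB}{1-\lambda_1}$, so that it matches the threshold in $(ML3')$. We also need the comparison to use only closeness to $L_j$ within the block. This is immediate, because the inductive step only ever involves $L_{j+\ell}-L_j$ acting on $L_j^{\ell}g$.
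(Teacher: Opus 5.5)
Your proposal is correct and follows the paper's approach. The paper justifies the lemma only by saying it follows from the proof of Lemma~\ref{losmem} (estimate \eqref{222}) with $L_0$ replaced by the frozen operator $L_j$, and your three steps carry that out: the Lemma~\ref{XXX}-type comparison with $L_j^M$ uses only $L_{j+\ell}-L_j$ for $0\le\ell\le M-1$, $(ML2')$ gives the weak bound $\frac{8(1-\lambda_1)}{10B}\|g\|_s$, and the Lasota--Yorke step yields $\frac{1}{10}+\frac{8}{10}$, with the threshold in $(ML3')$ matching your Step~1 constant exactly.
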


directly implying 
\begin{theorem}[Uniform exponential loss of memory under slow variation]
\label{thm:slow-variation-loss-memory}
Assume \((ML1)\), \((ML2')\), and \((ML3')\). Then the sequential system has
uniform exponential loss of memory. More precisely, there exist constants
\(C>0\) and \(\gamma>0\) such that, for every \(j,n\in\mathbb N\) and every
\(g\in V_s\),
\[
\|L^{(j,j+n-1)}g\|_s
\leq
C e^{-\gamma n}\|g\|_s .
\]
\end{theorem}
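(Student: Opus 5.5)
The plan is to iterate the block contraction of Lemma~\ref{lem:block-contraction-slow} and to use the uniform strong bound from the common Lasota--Yorke inequality to handle remainders.

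\emph{Step 1: block decomposition.} Fix $j\in\mathbb N$, $n\ge1$ and $g\in V_s$. Write $n=2Mk+r$ with $k\ge0$ and $0\le r<2M$. Then
\[
L^{(j,j+n-1)}=L^{(j+2Mk,\,j+n-1)}\circ L^{(j+2M(k-1),\,j+2Mk-1)}\circ\cdots\circ L^{(j,\,j+2M-1)},
\]
where the last factor $L^{(j+2Mk,j+n-1)}$ is absent (the identity) when $r=0$. By the Markov property, each $L_i$ maps $V_s$ into $V_s$, so every intermediate image $L^{(j,j+2Mi-1)}g$ stays in $V_s$. The key observation is that Lemma~\ref{lem:block-contraction-slow} holds for \emph{every} starting index $j$, because $(ML2')$ and $(ML3')$ are uniform in $j$. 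Applying it successively with starting indices $j,\,j+2M,\dots,\,j+2M(k-1)$ gives
\[
\|L^{(j,j+2Mk-1)}g\|_s\le(9/10)^k\|g\|_s .
\]

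\emph{Step 2: the remainder.} For the last $r<2M$ steps I use Lemma~\ref{lasotaY copy(1)} together with $\|\cdot\|_w\le\|\cdot\|_s$. For any $f\in B_s$ this yields
\[
\|L^{(j',j'+r-1)}f\|_s\le\Bigl(1+\tfrac{B}{1-\lambda_1}\Bigr)\|f\|_s ,
\]
which is the same bound as \eqref{sop24}. Combining with Step 1,
\[
\|L^{(j,j+n-1)}g\|_s\le\Bigl(1+\tfrac{B}{1-\lambda_1}\Bigr)(9/10)^{k}\|g\|_s .
\]

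\emph{Step 3: constants.} Since $k\ge n/(2M)-1$, we have $(9/10)^k\le(10/9)(9/10)^{n/(2M)}$. So the claim holds with
\[
\gamma=\frac{\log(10/9)}{2M},\qquad C=\frac{10}{9}\Bigl(1+\frac{B}{1-\lambda_1}\Bigr).
\]
Both constants are independent of $j$ and $n$.

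The only point needing care, and the main obstacle, is justifying Lemma~\ref{lem:block-contraction-slow} itself. This comes down to checking that the proof of Lemma~\ref{losmem} works with the frozen operator $L_j$ in place of $L_0$.

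\begin{itemize}
\item The analogue of Lemma~\ref{XXX} with reference operator $L_j$ needs closeness $\|L_{j+\ell}-L_j\|_{B_s\to B_w}\le\delta_M$ only for $\ell<M$, which is exactly $(ML3')$.
\item It also needs the Lasota--Yorke inequality for the powers $L_j^{\,i}$ of the frozen operator, which $(ML1)$ provides.
\item The first $M$ steps then give the $w$-estimate
\[
\|L^{(j,j+M-1)}g\|_w\le\|L_j^Mg\|_w+\frac{7(1-\lambda_1)}{10B}\|g\|_s\le\frac{8(1-\lambda_1)}{10B}\|g\|_s ,
\]
using $(ML2')$ for the first term and $(ML3')$ for the second.
\item The next $M$ steps are handled by Lemma~\ref{lasotaY copy(1)}. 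The term $\lambda_1^M\|L^{(j,j+M-1)}g\|_s$ is at most $\frac1{10}\|g\|_s$ by \eqref{sop24} and $(ML2')$. The weak term contributes at most $\frac{B}{1-\lambda_1}\cdot\frac{8(1-\lambda_1)}{10B}\|g\|_s=\frac{8}{10}\|g\|_s$.
\end{itemize}

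Adding the two contributions gives the contraction factor $9/10$ over $2M$ steps, as in \eqref{222}.
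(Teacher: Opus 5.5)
Your proposal is correct and follows the paper's route. The paper obtains Lemma~\ref{lem:block-contraction-slow} by rerunning the proof of Lemma~\ref{losmem} with the frozen operator $L_j$ in place of $L_0$, and then iterates the $2M$-block contraction, exactly as you do. Your explicit check of the block bound ($\frac{1}{10}+\frac{8}{10}=\frac{9}{10}$ via $(ML2')$ and $(ML3')$) and your treatment of the remainder $r<2M$ through \eqref{sop24} make precise what the paper leaves implicit.
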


Showing that the argument of Lemma~\ref{losmem} does not require
the whole sequence to remain close to one fixed operator \(L_0\). It is enough
that, on each block of length \(M\), the sequence remains close to the frozen
operator at the beginning of that block, and that all frozen operators satisfy
the same \(M\)-step convergence-to-equilibrium estimate on \(V_s\).

}

\subsection{Uniformly positive kernels imply exponential loss of memory in $L^1$}
\label{subsec:doeblin_ELoM}

Let $(X,\mathcal A)$ be a measurable space and let $m$ be a reference probability measure on $X$.
We identify absolutely continuous probability measures $\mu\ll m$ with their densities $f=d\mu/dm\in L^1(m)$.
A (time-dependent) Markov kernel is a family of transition probabilities
\[
K_n(x,dy),\qquad n\in\mathbb Z,
\]
and the associated (annealed) transfer operator acting on $L^1(m)$ is
\[
(L_n f)(y) := \int_X f(x)\,k_n(x,y)\,dm(x),
\qquad\text{where } K_n(x,dy)=k_n(x,y)\,m(dy).
\]
Each $L_n$ is positive and preserves mass: $\int L_n f\,dm=\int f\,dm$.

\begin{assumption}[Uniform Doeblin minorization]\label{ass:doeblin}
There exists $\alpha\in(0,1]$ such that for every $n\in\mathbb Z$ and every $x\in X$,
\begin{equation}\label{eq:minorization}
K_n(x,\cdot)\ \ge\ \alpha\, m(\cdot),
\end{equation}
i.e.\ $K_n(x,A)\ge \alpha\,m(A)$ for all measurable $A\subset X$.
Equivalently, one may assume that $k_n(x,y)\ge \alpha$ for $m\otimes m$-a.e.\ $(x,y)$.
\end{assumption}

\begin{theorem}[Exponential loss of memory in $L^1$]\label{thm:doeblin_ELoM}
Assume \eqref{eq:minorization}. Then for all $m<n$ and all $f,g\in L^1(m)$ with $\int f\,dm=\int g\,dm$,
\begin{equation}\label{eq:ELoM_L1}
\bigl\|L_{n-1}\cdots L_m (f-g)\bigr\|_{L^1(m)}
\ \le\
(1-\alpha)^{\,n-m}\,\|f-g\|_{L^1(m)}.
\end{equation}
In particular, the sequential system $(L_n)$ has exponential loss of memory on the zero-mean subspace
\[
L^1_0(m):=\left\{h\in L^1(m):\int h\,dm=0\right\}.
\]
\end{theorem}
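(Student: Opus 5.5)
The plan is the classical one-step Doeblin contraction for a single zero-mean density, iterated along the sequence. Since the operators are linear, it suffices to take $h:=f-g\in L^1_0(m)$ and prove the one-step bound
\[
\|L_j h\|_{L^1(m)}\le (1-\alpha)\|h\|_{L^1(m)}\qquad\forall j\in\mathbb Z,\ h\in L^1_0(m).
\]
Each $L_j$ preserves mass, so $L_j h\in L^1_0(m)$ again. Applying the one-step bound successively to $L_m,\dots,L_{n-1}$ then gives \eqref{eq:ELoM_L1}, with the factor $(1-\alpha)^{n-m}$.

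For the one-step bound I will use the minorization to split the kernel. Set $\tilde k_j(x,y):=k_j(x,y)-\alpha$. By \eqref{eq:minorization} this is nonnegative $m\otimes m$-a.e., and since $\int k_j(x,y)\,dm(y)=1$ and $m$ is a probability measure, $\int\tilde k_j(x,y)\,dm(y)=1-\alpha$ for every $x$. For $h$ with $\int h\,dm=0$,
\[
(L_jh)(y)=\int h(x)\tilde k_j(x,y)\,dm(x)+\alpha\int h\,dm=\int h(x)\tilde k_j(x,y)\,dm(x).
\]
So the constant part of the kernel annihilates zero-mean functions. Taking absolute values and applying Tonelli,
\[
\|L_jh\|_{L^1}\le\int\!\!\int |h(x)|\tilde k_j(x,y)\,dm(y)\,dm(x)=(1-\alpha)\|h\|_{L^1}.
\]

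The only delicate point is measurability. I need $k_j$ to be jointly measurable, so that Fubini/Tonelli applies, and the a.e. lower bound to hold $m\otimes m$-a.e. Both come from the standing assumption that $K_j(x,dy)=k_j(x,y)\,m(dy)$ with a measurable density. If only the set-wise form $K_j(x,A)\ge\alpha m(A)$ is available, I will instead work at the level of measures. In that case $\tilde K_j:=K_j-\alpha m$ is a sub-Markov kernel of mass $1-\alpha$. Writing $\mu=h\,m$, the $\alpha m$ part again contributes $\alpha\,m\int h\,dm=0$, and the total-variation estimate $\|\int h(x)\tilde K_j(x,\cdot)\,dm(x)\|_{TV}\le(1-\alpha)\|h\|_{L^1}$ gives the same bound.
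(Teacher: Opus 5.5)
Your proof is correct and is essentially the paper's argument. The paper also splits $K_n=\alpha m+(1-\alpha)\widetilde K_n$, observes that the $\alpha m$ part annihilates zero-mean densities, and uses that the remaining Markov part is an $L^1$ contraction, then iterates using mass preservation. Your unnormalized kernel $\tilde k_j=k_j-\alpha$, a sub-Markov kernel of mass $1-\alpha$, is marginally cleaner: it also covers the endpoint $\alpha=1$ allowed in Assumption~\ref{ass:doeblin}, where the paper's normalization by $\frac{1}{1-\alpha}$ is undefined.
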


\begin{proof}
Fix $n$ and set $\widetilde K_n(x,dy):=\frac{1}{1-\alpha}\bigl(K_n(x,dy)-\alpha m(dy)\bigr)$.
Assumption~\ref{ass:doeblin} implies that $\widetilde K_n(x,\cdot)$ is a Markov kernel (nonnegative and integrating to $1$),
and we can write the convex decomposition
\begin{equation}\label{eq:kernel_split}
K_n(x,dy) = \alpha\, m(dy) + (1-\alpha)\,\widetilde K_n(x,dy).
\end{equation}
Let $\widetilde L_n$ be the transfer operator associated to $\widetilde K_n$; then
\begin{equation}\label{eq:operator_split}
L_n f = \alpha\Big(\int_X f\,dm\Big)\mathbf{1} + (1-\alpha)\widetilde L_n f,
\end{equation}
where $\mathbf{1}$ denotes the constant function equal to $1$ on $X$.

Now let $h\in L^1_0(m)$ (so $\int h\,dm=0$). By \eqref{eq:operator_split},
\[
L_n h = (1-\alpha)\widetilde L_n h.
\]
Since $\widetilde L_n$ is Markov, it is $L^1$-nonexpanding: $\|\widetilde L_n h\|_{L^1}\le \|h\|_{L^1}$.
Hence
\[
\|L_n h\|_{L^1} \le (1-\alpha)\|h\|_{L^1}.
\]
Iterating from $m$ to $n-1$ gives
\[
\|L_{n-1}\cdots L_m h\|_{L^1} \le (1-\alpha)^{n-m}\|h\|_{L^1}.
\]
Finally, for $f,g$ with equal mass we have $h=f-g\in L^1_0(m)$, yielding \eqref{eq:ELoM_L1}.
\end{proof}

\begin{remark}[From $L^1$ contraction to stronger topologies via smoothing]\label{rem:regularization_upgrade}
Theorem~\ref{thm:doeblin_ELoM} yields exponential memory loss in  $L^1$ (equivalently, total variation
for absolutely continuous measures). If, in addition, the kernels are \emph{smoothing} (e.g.\ $k_n$ is uniformly $C^r$ in $y$) then one typically has a regularization bound of the form
\[
\|L_n f\|_{B_s} \le R\|f\|_{L^1}
\qquad\text{for some strong space }B_s\hookrightarrow L^1,
\]
and the $L^1$ contraction on the zero-mean subspace can be upgraded to exponential decay in $B_s$
(after one step, or in a mixed strong/weak sense). 
\end{remark}

\section{Evaluation of the constants for the perturbative estimate for expanding maps}
\label{subsec:mixed_continuity_n_branches}

In this section we show an explicit estimate for the mixed norm Lipschitz constant $C(T_0)$ appearing in \ref{eq:mixed_bound_CT0}. The results of this section combined with the ones presented in Section \ref{sequential} can produce explicit examples of sequential expanding maps for which our linear response results hold.

Let $T_0,T_1:\mathbb S^1\to\mathbb S^1$ be $C^3$ \emph{expanding maps} of the same degree $n\ge 2$.
Assume $T_0$ is uniformly expanding:
\begin{equation}\label{eq:base_constants}
\lambda_0:=\inf_{x\in\mathbb S^1}|T_0'(x)|>1,\qquad
M_0:=\|T_0'\|_\infty<\infty,\qquad
M_2:=\|T_0''\|_\infty<\infty.
\end{equation}
Assume $T_1$ is $C^2$-close to $T_0$:
\begin{equation}\label{eq:C2_close_covering}
\delta:=\|T_1-T_0\|_{C^2}\le \lambda_0 -1,
\end{equation}
so that $\inf|T_1'|> 1$ and $\|T_1'\|_\infty\le M_0+\delta$.

As usual, when working with transfer operators on \(\mathbb S^1\), we identify the circle with \([0,1)\) and use the inverse branches of the corresponding piecewise smooth interval map. These branches are defined up to finitely many endpoints, which are irrelevant for the \(L^1\) estimates below. Thus, all inverse-branch formulas are understood in this standard almost-everywhere sense.
Since $T_i$ is a covering of degree $n$, there exist $n$ global $C^3$ inverse branches
$h_{i,j}:\mathbb S^1\to \mathbb S^1$ ($j=1,\dots,n$) such that
\[
T_i\circ h_{i,j}=\mathrm{id}_{\mathbb S^1},\qquad i\in\{0,1\}.
\]
The Perron--Frobenius operator $L_i$ admits the representation
\begin{equation}\label{eq:PF_covering}
(L_i f)(x)=\sum_{j=1}^n f(h_{i,j}(x))\,g_{i,j}(x),
\qquad
g_{i,j}(x):=\frac{1}{|T_i'(h_{i,j}(x))|}.
\end{equation}

\begin{lemma}[Inverse branch displacement]\label{lem:inv_branch_disp_covering_self}
For each $j=1,\dots,n$,
\[
\|h_{0,j}-h_{1,j}\|_\infty \le \frac{1}{\lambda_0}\|T_0-T_1\|_{C^0}\le \frac{\delta}{\lambda_0}.
\]
\end{lemma}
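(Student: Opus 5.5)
To prove Lemma~\ref{lem:inv_branch_disp_covering_self}, I would fix a branch index $j$ and a point $x\in\mathbb S^1$. I would then compare the two preimages $y_0:=h_{0,j}(x)$ and $y_1:=h_{1,j}(x)$, which satisfy $T_0(y_0)=x=T_1(y_1)$.

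The key identity comes from inserting $T_0(y_1)$:
\[
T_0(y_1)-T_0(y_0)=T_0(y_1)-T_1(y_1),
\]
understood in the lift to $\mathbb R$. The right-hand side is bounded in absolute value by $\|T_0-T_1\|_{C^0}$. For the left-hand side I would use the mean value theorem on the segment between $y_0$ and $y_1$: since $|T_0'|\ge\lambda_0$ everywhere and $T_0'$ has constant sign (it is a covering expanding map), we get $|T_0(y_1)-T_0(y_0)|\ge\lambda_0|y_1-y_0|$. Dividing gives $|h_{0,j}(x)-h_{1,j}(x)|\le \lambda_0^{-1}\|T_0-T_1\|_{C^0}$. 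Taking the supremum over $x$ and using $\|\cdot\|_{C^0}\le\|\cdot\|_{C^2}=\delta$ gives the second inequality.

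The main obstacle is making this rigorous on the circle rather than on $\mathbb R$. This means choosing the branches $h_{0,j}$ and $h_{1,j}$ so that they correspond to each other, that is, so that $y_0$ and $y_1$ lie in the same fundamental interval of $T_0$ and the segment joining them lies inside it. I would handle this by a continuity/homotopy argument. Consider $T_t:=T_0+t(T_1-T_0)$ for $t\in[0,1]$. By \eqref{eq:C2_close_covering}, $|T_t'|\ge\lambda_0-\delta>0$ (indeed $\ge 1$ under the stated smallness), so every $T_t$ is an expanding covering of degree $n$. Its inverse branches $h_{t,j}(x)$ then depend continuously (indeed $C^1$) on $t$, by the implicit function theorem. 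Defining the labelling of the branches of $T_1$ by continuation from those of $T_0$ makes the pairing canonical.

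With this labelling, I would alternatively obtain the bound directly by differentiating $T_t(h_{t,j}(x))=x$ in $t$:
\[
\partial_t h_{t,j}=-\frac{(T_1-T_0)(h_{t,j})}{T_t'(h_{t,j})}.
\]
This yields $|h_{1,j}-h_{0,j}|\le\int_0^1 \|T_1-T_0\|_{C^0}/\inf|T_t'|\,dt$. Since that version gives the constant $1/(\lambda_0-\delta)$ instead of $1/\lambda_0$, I would prefer the mean value argument. In that argument I would use the continuation only to ensure that the lifted difference $y_1-y_0$ is the small representative, so that the segment between them lies within a single injectivity interval of $T_0$; the estimate itself then uses only $\inf|T_0'|=\lambda_0$. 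The "almost everywhere" caveat at branch endpoints is irrelevant, since the estimate is pointwise for every $x$.
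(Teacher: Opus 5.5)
Your argument is correct and is essentially the paper's proof: you insert $T_0(y_1)$, use $T_0(y_0)=x=T_1(y_1)$, and apply the mean value theorem with $\inf|T_0'|\ge\lambda_0$, which gives the constant $1/\lambda_0$. Your continuation/lift discussion adds care about consistently pairing the branches of $T_0$ and $T_1$, a point the paper leaves implicit; note that once you work with compatible lifts $\tilde h_{i,j}(x)=\tilde T_i^{-1}(x+j)$, the lifted $\tilde T_0$ is already a diffeomorphism of $\mathbb R$, so the mean value step needs no restriction to a single injectivity interval.
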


\begin{proof}
Fix $x\in\mathbb S^1$ and set $y_0=h_{0,j}(x)$, $y_1=h_{1,j}(x)$. Then $T_0(y_0)=x=T_1(y_1)$, hence
\[
|T_0(y_1)-T_0(y_0)|=|T_0(y_1)-T_1(y_1)|\le \|T_0-T_1\|_{C^0}.
\]
By the mean value theorem and $\inf|T_0'|\ge \lambda_0$, we obtain
$\lambda_0|y_1-y_0|\le \|T_0-T_1\|_{C^0}$, which implies the claim.
\end{proof}

\begin{lemma}[Weight displacement]\label{lem:weight_disp_covering_self}
For each $j=1,\dots,n$,
\[
\|g_{0,j}-g_{1,j}\|_\infty
\le
\left(\frac{M_2}{\lambda_0^3}+\frac{1}{\lambda_0}\right)\delta.
\]
\end{lemma}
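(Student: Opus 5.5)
Fix $j$ and $x\in\mathbb S^1$, and write $y_0=h_{0,j}(x)$ and $y_1=h_{1,j}(x)$. By definition,
\[
g_{0,j}(x)-g_{1,j}(x)=\frac{1}{|T_0'(y_0)|}-\frac{1}{|T_1'(y_1)|}
=\frac{|T_1'(y_1)|-|T_0'(y_0)|}{|T_0'(y_0)|\,|T_1'(y_1)|}.
\]
The plan is to bound the numerator by splitting it with the intermediate term $|T_0'(y_1)|$:
\[
\bigl||T_1'(y_1)|-|T_0'(y_0)|\bigr|\le |T_1'(y_1)-T_0'(y_1)|+|T_0'(y_1)-T_0'(y_0)|.
\]
The first piece is at most $\|T_1-T_0\|_{C^1}\le\delta$. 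For the second, the mean value theorem gives $M_2|y_1-y_0|$, and Lemma~\ref{lem:inv_branch_disp_covering_self} then bounds it by $M_2\delta/\lambda_0$. Since $T_0'$ and $T_1'$ have constant sign (both maps are expanding coverings of the same degree), removing the absolute values causes no trouble; the reverse triangle inequality handles it in any case.

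For the denominator, $|T_0'(y_0)|\ge\lambda_0$ holds directly. For $|T_1'(y_1)|$ the easy bound is $\inf|T_1'|\ge\lambda_0-\delta$, but this gives $(\lambda_0-\delta)$ in the denominator rather than the clean powers of $\lambda_0$ in the statement. I would therefore not expand the common fraction. Instead, write
\[
\frac{1}{|T_0'(y_0)|}-\frac{1}{|T_1'(y_1)|}=\Bigl(\frac{1}{|T_0'(y_0)|}-\frac{1}{|T_0'(y_1)|}\Bigr)+\Bigl(\frac{1}{|T_0'(y_1)|}-\frac{1}{|T_1'(y_1)|}\Bigr).
\]
The first bracket involves only $T_0$. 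The function $1/|T_0'|$ has derivative bounded by $M_2/\lambda_0^2$, so the bracket is at most $(M_2/\lambda_0^2)|y_0-y_1|\le M_2\delta/\lambda_0^3$, which produces exactly the $M_2/\lambda_0^3$ term.

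The second bracket equals $\bigl||T_1'(y_1)|-|T_0'(y_1)|\bigr|/(|T_0'(y_1)||T_1'(y_1)|)$. Its numerator is at most $\delta$, and $|T_0'(y_1)|\ge\lambda_0$. The main obstacle is the remaining factor $1/|T_1'(y_1)|$: to reach $\delta/\lambda_0$ one needs $|T_1'|\ge1$, which is exactly what the standing hypothesis \eqref{eq:C2_close_covering} supplies, since it gives $\inf|T_1'|\ge\lambda_0-\delta\ge1$. The second bracket is then at most $\delta/\lambda_0$. If this step failed, I would fall back on the factor $\lambda_0(\lambda_0-\delta)$, but the stated hypothesis is designed to make the cruder bound suffice.

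Summing the two brackets and taking the supremum over $x$ gives $\|g_{0,j}-g_{1,j}\|_\infty\le\bigl(M_2/\lambda_0^3+1/\lambda_0\bigr)\delta$ for each $j$. The almost-everywhere convention for the inverse branches plays no role, because these are pointwise bounds.
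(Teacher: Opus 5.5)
Your proof is correct and follows essentially the same route as the paper. It uses the same splitting through the intermediate value $1/|T_0'(y_1)|$, and bounds the first term with the Lipschitz constant $M_2/\lambda_0^2$ together with Lemma~\ref{lem:inv_branch_disp_covering_self}; the second term is bounded using $|T_0'(y_1)|\ge\lambda_0$ and $|T_1'(y_1)|\ge 1$, which follows from $\delta\le\lambda_0-1$.
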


\begin{proof}
Fix $x\in\mathbb S^1$ and write $y_0=h_{0,j}(x)$, $y_1=h_{1,j}(x)$. Then
\[
|g_{0,j}(x)-g_{1,j}(x)|
\le
\left|\frac{1}{|T_0'(y_0)|}-\frac{1}{|T_0'(y_1)|}\right|
+
\left|\frac{1}{|T_0'(y_1)|}-\frac{1}{|T_1'(y_1)|}\right|.
\]
For the first term, since $u\mapsto 1/u$ is $1/\lambda_0^2$--Lipschitz on $[\lambda_0,\infty)$,
\[
\left|\frac{1}{|T_0'(y_0)|}-\frac{1}{|T_0'(y_1)|}\right|
\le \frac{1}{\lambda_0^2}\,|T_0'(y_0)-T_0'(y_1)|
\le \frac{M_2}{\lambda_0^2}\,|y_0-y_1|.
\]
Using Lemma~\ref{lem:inv_branch_disp_covering_self} gives
\[
\left|\frac{1}{|T_0'(y_0)|}-\frac{1}{|T_0'(y_1)|}\right|
\le \frac{M_2}{\lambda_0^3}\|T_0-T_1\|_{C^0}\le \frac{M_2}{\lambda_0^3}\delta.
\]
For the second term, by \eqref{eq:C2_close_covering} we have $|T_0'(y_1)|\ge \lambda_0$ and $|T_1'(y_1)|\ge 1$, hence
\[
\left|\frac{1}{|T_0'(y_1)|}-\frac{1}{|T_1'(y_1)|}\right|
=
\frac{|T_1'(y_1)-T_0'(y_1)|}{|T_0'(y_1)T_1'(y_1)|}
\le \frac{\|T_1'-T_0'\|_\infty}{\lambda_0}
= \frac{1}{\lambda_0}\|T_0-T_1\|_{C^1}
\le \frac{1}{\lambda_0}\delta.
\]
Combining the bounds yields the claim.
\end{proof}

\begin{lemma}[Composition displacement for $W^{1,1}$]\label{lem:comp_disp_covering_self}
For each $j=1,\dots,n$ and each $f\in W^{1,1}(\mathbb S^1)$,
\[
\|f\circ h_{0,j}-f\circ h_{1,j}\|_{L^1}
\le 2(M_0+\delta)\,\|h_{0,j}-h_{1,j}\|_\infty\,\|f'\|_{L^1}
\le 2\frac{M_0+\delta}{\lambda_0}\,\delta\,\|f'\|_{L^1}.
\]
\end{lemma}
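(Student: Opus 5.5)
The plan is to write the difference of compositions as an integral along the straight-line homotopy between the two inverse branches, and then change variables. Fix $j$ and set $\Phi_t:=h_{1,j}+t\,(h_{0,j}-h_{1,j})$ for $t\in[0,1]$, understood through lifts to $\mathbb R$ so that the segment makes sense on the circle. By density of $C^1$ in $W^{1,1}$, it suffices to treat smooth $f$; both sides are continuous in the $W^{1,1}$ topology, since composition with a $C^1$ diffeomorphism onto its image is $L^1$-bounded. For such $f$,
\[
f(h_{0,j}(x))-f(h_{1,j}(x))=\int_0^1 f'(\Phi_t(x))\,\bigl(h_{0,j}(x)-h_{1,j}(x)\bigr)\,dt .
\]
Taking absolute values and integrating in $x$, then using Fubini, gives
\[
\|f\circ h_{0,j}-f\circ h_{1,j}\|_{L^1}\le \|h_{0,j}-h_{1,j}\|_\infty\int_0^1\!\!\int_{\mathbb S^1}|f'(\Phi_t(x))|\,dx\,dt .
\]

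The key step is bounding $\int|f'\circ\Phi_t|\,dx$ by a constant times $\|f'\|_{L^1}$ via the change of variables $y=\Phi_t(x)$. The derivative is $\Phi_t'=t\,h_{0,j}'+(1-t)\,h_{1,j}'$. Both $h_{i,j}'=1/T_i'\circ h_{i,j}$ have the same sign, because the maps have the same degree and are $C^2$-close, so the inverse branches have the same orientation. Moreover $|h_{0,j}'|\ge 1/M_0$ and $|h_{1,j}'|\ge 1/(M_0+\delta)$. Hence $|\Phi_t'|\ge 1/(M_0+\delta)$, so each $\Phi_t$ is monotone on the fundamental domain and its image is an arc of length at most one plus a small overlap. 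I would bound the multiplicity of the covering $x\mapsto\Phi_t(x)$ of the circle by $2$: the image of $[0,1)$ has length equal to $\int|\Phi_t'|\le \max_i\|h_{i,j}'\|_\infty\le 1$ modulo wrap-around at the endpoints, so each point is covered at most twice. This gives
\[
\int|f'(\Phi_t(x))|\,dx\le 2(M_0+\delta)\|f'\|_{L^1},
\]
which produces the factor $2(M_0+\delta)$. The second inequality in the statement then follows immediately from Lemma~\ref{lem:inv_branch_disp_covering_self}.

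I expect the main obstacle to be the care needed on the circle. This includes the interpolation between branches through lifts, the almost-everywhere convention at the finitely many branch endpoints, and justifying the multiplicity bound $2$. I would handle these by working on $[0,1)$ with the piecewise branches, as the paper's convention allows. There each $\Phi_t$ is a monotone $C^1$ map of an interval of length at most $1$ into $\mathbb R$, so its projection to $\mathbb S^1$ covers each point at most twice.
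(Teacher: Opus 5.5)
Your proof is correct and gives the stated constant, but its key estimate differs from the paper's. The paper also starts from the fundamental theorem of calculus along the segment from $h_{1,j}(x)$ to $h_{0,j}(x)$. It keeps the point $y$ of that segment as the second integration variable and applies Fubini in $(x,y)$. It then only needs to bound, for each fixed $y$, the measure of $A_y=\{x:\ y \text{ lies between } h_{1,j}(x) \text{ and } h_{0,j}(x)\}$. Since $|h_{1,j}(x)-y|\le\Delta:=\|h_{0,j}-h_{1,j}\|_\infty$ and $x=T_1(h_{1,j}(x))$, one has $A_y\subseteq T_1([y-\Delta,y+\Delta])$, hence $m(A_y)\le 2\|T_1'\|_\infty\Delta$; the factor $2$ is just the length of this window. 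You instead integrate over the homotopy parameter and pull out $|h_{0,j}-h_{1,j}|\le\Delta$ as a Jacobian factor. You then change variables under each interpolated branch $\Phi_t$, which needs the same-orientation observation, the lower bound $\Phi_t'\ge 1/(M_0+\delta)$, and a multiplicity count through lifts.

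The paper's route is lighter. It uses only that $h_{1,j}$ is a right inverse of a map with Lipschitz constant at most $M_0+\delta$, uses nothing about $h_{0,j}$ beyond the sup distance, and never needs the interpolants to be monotone. Your route is the standard homotopy-plus-change-of-variables argument, the same one behind \eqref{eq:composition_estimate} in the proof of Lemma~\ref{lem:Dh}. It is in fact slightly sharper than you claim. Your own length computation shows that $\Phi_t$ maps the half-open fundamental domain onto a half-open interval of length at most $1$. So the multiplicity is $1$, not $2$, and you get the bound with $M_0+\delta$ in place of $2(M_0+\delta)$.
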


\begin{proof}
For each $x$, the fundamental theorem of calculus gives
\[
|f(h_{0,j}(x))-f(h_{1,j}(x))|
\le \int_{h_{1,j}(x)}^{h_{0,j}(x)} |f'(t)|\,dt.
\]
Integrate over $x\in\mathbb S^1$ and apply Fubini:
\[
\int_{\mathbb S^1}|f(h_{0,j}(x))-f(h_{1,j}(x))|\,dx
\le \int_{\mathbb S^1}|f'(t)|\,\bigl|\{x: t\in [h_{1,j}(x),h_{0,j}(x)]\}\bigr|\,dt.
\]
If $t$ lies between $h_{1,j}(x)$ and $h_{0,j}(x)$, then $x=T_1(h_{1,j}(x))$ lies within distance
at most $\|T_1'\|_\infty \|h_{0,j}-h_{1,j}\|_\infty$ of $T_1(t)$.

For $A_t:=\{x:\ t\in[\min(h_0(x),h_1(x)),\max(h_0(x),h_1(x))]\}$ we have
$A_t\subseteq T_1([t-\Delta,t+\Delta])$ where $\Delta:=\|h_0-h_1\|_\infty$, hence
$m(A_t)\le 2\|T_1'\|_\infty\,\Delta$.

Substituting this into the previous inequality yields the first bound. The second bound follows from
Lemma~\ref{lem:inv_branch_disp_covering_self}.
\end{proof}

\begin{proposition}[Mixed continuity $W^{1,1}\to L^1$]\label{prop:mixed_continuity_covering_self}
Suppose $\|T_0-T_1\|_{C^2}\le \lambda_0 -1$.
Then, for every $f\in W^{1,1}(\mathbb S^1)$,
\[
\|(L_0-L_1)f\|_{L^1}
\le
C(T_0)\,\|f\|_{W^{1,1}}\,\|T_0-T_1\|_{C^2},
\]
where one may take 
\[
C(T_0):=
n\left[
2\frac{M_0+\lambda_0 -1}{\lambda_0^2}
+
(M_0+\lambda_0 -1 )\left(\frac{M_2}{\lambda_0^3}+\frac{1}{\lambda_0}\right)
\right].
\]
\end{proposition}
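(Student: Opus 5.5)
We split $L_0-L_1$ along the inverse-branch representation \eqref{eq:PF_covering}. For each branch $j$ we add and subtract $f(h_{1,j})\,g_{0,j}$:
\[
f(h_{0,j})g_{0,j}-f(h_{1,j})g_{1,j}
=\bigl(f\circ h_{0,j}-f\circ h_{1,j}\bigr)g_{0,j}+f\circ h_{1,j}\,\bigl(g_{0,j}-g_{1,j}\bigr).
\]
Summing over $j=1,\dots,n$ and using the triangle inequality reduces the claim to bounding these two terms in $L^1$, each by a constant times $\delta\|f\|_{W^{1,1}}$. The factor $n$ in $C(T_0)$ comes from this sum.

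For the first term we use $\|g_{0,j}\|_\infty\le 1/\lambda_0$ together with Lemma~\ref{lem:comp_disp_covering_self}. This gives the bound
\[
\frac{1}{\lambda_0}\cdot 2\frac{M_0+\delta}{\lambda_0}\,\delta\,\|f'\|_{L^1}.
\]
Since $\delta\le\lambda_0-1$, it is at most $2\frac{M_0+\lambda_0-1}{\lambda_0^2}\,\delta\,\|f\|_{W^{1,1}}$, which matches the first summand of $C(T_0)$.

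For the second term we use Lemma~\ref{lem:weight_disp_covering_self} in $L^\infty$, which leaves us needing to bound $\|f\circ h_{1,j}\|_{L^1}$. We change variables $y=h_{1,j}(x)$, so $dx=|T_1'(y)|\,dy$. The integral $\int|f(h_{1,j}(x))|\,dx$ then becomes an integral of $|f|\,|T_1'|$ over the image arc of $h_{1,j}$, and is bounded by $\|T_1'\|_\infty\|f\|_{L^1}\le (M_0+\lambda_0-1)\|f\|_{L^1}$. This yields the second summand
\[
(M_0+\lambda_0-1)\Bigl(\frac{M_2}{\lambda_0^3}+\frac1{\lambda_0}\Bigr)\delta\,\|f\|_{W^{1,1}}.
\]

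The only delicate point is the change-of-variables step. The inverse branches are understood only almost everywhere, with the standard identification of $\mathbb S^1$ with $[0,1)$, and each $h_{1,j}$ must be a diffeomorphism onto an arc of length at most $1$. This holds because $T_1$ is a degree-$n$ covering with $|T_1'|>1$, so the branches have disjoint images partitioning the circle. Once this is in place, everything reduces to the preceding lemmas. We also use $\|f'\|_{L^1}$, $\|f\|_{L^1}\le\|f\|_{W^{1,1}}$, and set $\delta=\|T_0-T_1\|_{C^2}$.
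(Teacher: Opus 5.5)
Your proposal is correct and follows the paper's proof essentially line by line. You use the same splitting into $\sum_j (f\circ h_{0,j}-f\circ h_{1,j})g_{0,j}+\sum_j (f\circ h_{1,j})(g_{0,j}-g_{1,j})$, bound the first sum with $\|g_{0,j}\|_\infty\le 1/\lambda_0$ and Lemma~\ref{lem:comp_disp_covering_self}, and bound the second with Lemma~\ref{lem:weight_disp_covering_self} and the change of variables $x=T_1(y)$, which gives $\|f\circ h_{1,j}\|_{L^1}\le \|T_1'\|_\infty\|f\|_{L^1}$ and exactly the stated $C(T_0)$. The change-of-variables step you flag as delicate only needs $h_{1,j}$ to be injective, which is automatic from $T_1\circ h_{1,j}=\mathrm{id}$.
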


\begin{proof}
From \eqref{eq:PF_covering},
\[
(L_0-L_1)f
=\sum_{j=1}^n \bigl(f\circ h_{0,j}-f\circ h_{1,j}\bigr)\,g_{0,j}
+
\sum_{j=1}^n (f\circ h_{1,j})\,(g_{0,j}-g_{1,j})
=:A+B.
\]
For $A$, use $\|g_{0,j}\|_\infty\le 1/\lambda_0$ and Lemma~\ref{lem:comp_disp_covering_self}:
\[
\|A\|_{L^1}\le \sum_{j=1}^n \|g_{0,j}\|_\infty\,\|f\circ h_{0,j}-f\circ h_{1,j}\|_{L^1}
\le 2n\cdot \frac{1}{\lambda_0}\cdot \frac{M_0+\delta}{\lambda_0}\,\delta\,\|f'\|_{L^1}.
\]
Since $\delta\le \lambda_0 -1$, this gives
\[
\|A\|_{L^1}\le 2n\cdot \frac{M_0+\lambda_0 -1}{\lambda_0^2}\,\delta\,\|f'\|_{L^1}.
\]
For $B$, by change of variables $x=T_1(y)$ on the image of $h_{1,j}$,
\[
\|f\circ h_{1,j}\|_{L^1}=\int_{\mathbb S^1}|f(h_{1,j}(x))|\,dx
=\int_{h_{1,j}(\mathbb S^1)} |f(y)|\,|T_1'(y)|\,dy
\le \|T_1'\|_\infty\,\|f\|_{L^1}\le (M_0+\delta)\|f\|_{L^1}.
\]
Thus, using Lemma~\ref{lem:weight_disp_covering_self},
\[
\|B\|_{L^1}
\le \sum_{j=1}^n \|f\circ h_{1,j}\|_{L^1}\,\|g_{0,j}-g_{1,j}\|_\infty
\le n\,(M_0+\delta)\left(\frac{M_2}{\lambda_0^3}+\frac{1}{\lambda_0}\right)\delta\,\|f\|_{L^1}.
\]
Again using $\delta\le\lambda_0-1$ and $\|f\|_{W^{1,1}}=\|f\|_{L^1}+\|f'\|_{L^1}$ yields the stated bound.
\end{proof}

\section*{Acknowledgements}
VL wishes to thank M. Branicki for stimulating conversations. VL acknowledges the partial support provided by the Horizon Europe Projects Past2Future (Grant No. 101184070) and ClimTIP (Grant No. 100018693), by the ARIA SCOP-PR01-P003—Advancing Tipping Point Early Warning AdvanTip project, by the European Space Agency Project PREDICT (Contract 4000146344/24/I-LR), and  by the NNSFC  International Collaboration Fund for Creative Research Teams (Grant No. W2541005).

SG acknowledges the MIUR Excellence Department Project awarded to the
Department of Mathematics, University of Pisa, CUP I57G22000700001.

\section*{Data Availability Statement}

No datasets were generated or analysed during the current study.

\section*{Conflict of interest} The authors declare that they have no conflict of interest.

\bibliographystyle{unsrt}  
\bibliography{biblio}

\end{document}